\tikzset{>=stealth',
        cvertex/.style={circle,draw=black,inner sep=1pt,outer sep=3pt},
        vertex/.style={circle,fill=black,inner sep=1pt,outer sep=3pt},
        star/.style={circle,fill=yellow,inner sep=0.75pt,outer sep=0.75pt},
        tvertex/.style={inner sep=1pt,font=\scriptsize},
        gap/.style={inner sep=0.5pt,fill=white}}
\author{J Karmazyn}
\title{Superpotentials, Calabi Yau algebras, and PBW deformations}
\theoremstyle{definition} 
\newtheorem{quiver}{Definitions}[subsection]
\newtheorem{pathalg}[quiver]{Definition}
\newtheorem{superpotential}[quiver]{Definition}
\newtheorem{superpotential2}[quiver]{Definition}
\newtheorem{diff}[quiver]{Definition}
\newtheorem{diff2}[quiver]{Definition}
\newtheorem{CY}[quiver]{Definition}
\newtheorem{dual2}[quiver]{Definition}
\newtheorem{NKoszul}[quiver]{Definition}
\newtheorem{PBW}[quiver]{Definition}
\newtheorem{coh}[quiver]{Definition}
\newtheorem{superp}[quiver]{Definition}
\newtheorem{example2}[quiver]{Example}
\newtheorem{example4}[quiver]{Example}
\newtheorem{example5}[quiver]{Example}
\newtheorem{examplePBW}[quiver]{Example}
\newtheorem{examplepp}[quiver]{Example}
\newtheorem{gl1}[quiver]{Example}
\newtheorem{gl2}[quiver]{Example}
\theoremstyle{plain} 
\newtheorem{Calabi2}[quiver]{Lemma}
\newtheorem{BSW1}[quiver]{Theorem}
\newtheorem{BSW2}[quiver]{Theorem} 
\newtheorem{BSWK}[quiver]{Theorem}
\newtheorem{BSW3}[quiver]{Theorem} 
\newtheorem{BT1}[quiver]{Theorem} 
\newtheorem{WZ}[quiver]{Theorem} 
\newtheorem{zeroPBW}[quiver]{Lemma} 
\newtheorem{Cor1}[quiver]{Remark}
\newtheorem{Extra}[quiver]{Theorem}
\newtheorem*{pTheorem3}{Theorem} 
\newtheorem{Theorem3}[quiver]{Theorem} 
\newtheorem*{pTheoremCY}{Theorem} 
\newtheorem{TheoremCY}[quiver]{Theorem} 
\newtheorem{TheoremBT}[quiver]{Theorem} 
\newtheorem{EG}[quiver]{Theorem} 
\newtheorem{Sym}[quiver]{Theorem} 
\newtheorem*{pSym}{Theorem}
\newtheorem{glcase}[quiver]{Theorem}
\newtheorem{Gen}[quiver]{Theorem}
\newtheorem{Class}{Lemma}[subsection]
\DeclareMathAlphabet{\mathbbm}{U}{bbm}{m}{n}
\newcommand{\K}{\mathbbm{k}}
\newcommand{\A}{\mathcal{A}}
\let\oldtocsection=\tocsection
\let\oldtocsubsection=\tocsubsection
\let\oldtocsubsubsection=\tocsubsubsection
\renewcommand{\tocsection}[2]{\hspace{0em}\oldtocsection{#1}{#2}}
\renewcommand{\tocsubsection}[2]{\hspace{1em}\oldtocsubsection{#1}{#2}}
\renewcommand{\tocsubsubsection}[2]{\hspace{2em}\oldtocsubsubsection{#1}{#2}}
\newcommand{\Hom}{\textrm{Hom}}
\newcommand{\RHom}{\textrm{RHom}}
\newcommand{\SL}{\textrm{SL}}
\newcommand{\GL}{\textrm{GL}}
\def\Im{\mathop{\rm Im}\nolimits}
\begin{document}

\maketitle

\begin{abstract} The paper \cite{BSW} by Bocklandt, Schedler and Wemyss considers path algebras with relations given by the higher derivations of a superpotential, giving a condition for such an algebra to be Calabi-Yau. In particular they show that  the algebra $\mathbb{C}[V]\rtimes G$, for $V$ a finite dimensional $\mathbb{C}$ vector space and $G$ a finite subgroup of $\GL(V)$, is Morita equivalent to a path algebra with relations given by a superpotential, and is Calabi-Yau for $G<\SL(V)$. In this paper we extend these results, giving a condition for a PBW deformation of a Calabi-Yau, Koszul path algebra with relations given by a superpotential to have relations given by a superpotential, and proving these are Calabi-Yau in certain cases.

We apply our methods to symplectic reflection algebras, where we show that every symplectic reflection algebra is Morita equivalent to a path algebra whose relations are given by the higher derivations of an inhomogeneous superpotential.  In particular we show these are Calabi-Yau regardless of the deformation parameter.

Also, for $G$ a finite subgroup of $\GL_2(\mathbb{C})$ not contained in $\SL_2(\mathbb{C})$, we consider PBW deformations of a path algebra with relations which is Morita equivalent to $\mathbb{C}[x,y] \rtimes G$. We show there are no non trivial PBW deformations when $G$ is a small subgroup.

\end{abstract}

\tableofcontents

\section{Introduction}

\subsection{Introduction}

In this paper we consider path algebras of quivers with certain relations, in particular studying relations produced from a superpotential.  Given a quiver $Q$, a homogeneous superpotential of degree $n$ is an element, $\Phi_n=\sum c_{a_1 \dots a_n} a_1 \dots a_n$, in the path algebra of $Q$ satisfying the \emph{$n$ superpotential condition}: $c_{aq}=(-1)^{n-1}c_{qa}$ for all arrows $a$ and paths $q$. From such a superpotential $\Phi_n$ and a non-negative integer $k$ we construct an algebra $\mathcal{D}(\Phi_n,k):=\frac{\mathbb{C}Q}{R}$ as a path algebra with relations $R$. These relations are constructed by the process of \emph{differentiation}, where we define the left derivative of a path $p$ by a path $q$, denoted $\delta_q p$, to be $t$ if $p=qt$ and $0$ otherwise, and the relations  are given by $R=< \{ \delta_p \Phi_n : |p|=k\} >$.

Algebras of this form are considered in \cite{BSW}, where they are related to Calabi-Yau (CY), $N$-Koszul algebras. In \cite{BSW} a complex, $\mathcal{W}^{\bullet}$, is defined which depends only on the superpotential, and a path algebra with relations is $N$-Koszul and CY if and only if it is of the form  $\mathcal{D}(\Phi_n,k)$ for a superpotential $\Phi_n$ and $\mathcal{W}^{\bullet}$ is a resolution.

Skew group algebras, $\mathbb{C}[V] \rtimes G$, for $G$ a finite subgroup of $\GL(V)$, are Morita equivalent to path algebras of this form. These are $2$-Koszul, and CY when $G < \SL(V)$, and hence their relations can be given by a superpotential. An explicit way to calculate this superpotential is given in \cite[Theorem 3.2]{BSW}.

We prove two results concerning the PBW deformations of $(n-k)$-Koszul, $(k+2)$-CY algebras of the form $\mathcal{D}(\Phi_n,k)$. We define an inhomogeneous superpotential of degree $n$ to be an element of the path algebra $\Phi':=\Phi_n + \phi_{n-1} + \dots + \phi_k$, such that each $\phi_j:=\sum c_p p$ is a sum of elements of the path algebra of length $j$, and each $\phi_j$ satisfies the $n$ superpotential condition. Such a superpotential defines relations $P=<\{ \delta_p \Phi' : |p|=k \}>$, and we define $\mathcal{D}(\Phi',k):=\frac{\mathbb{C}Q}{P}$.  Theorem \ref{Theorem3} classifies which PBW deformations are of this form. This is known for the case of PBW deformations of $\mathcal{D}(\Phi_n,1)$, \cite[Theorem 3.1, 3.2]{BT}, but here we extend this to include higher differentials, $k>1$.

We then prove that certain classes of PBW deformations of a $2$-Koszul, $n$-CY ${\mathcal{D}(\Phi_n,n-2)}$ are $n$-CY in Theorem \ref{TheoremCY}. This is already known in the $3$-CY case due to\cite{BT} which proves that any PBW deformation of an $N$-Koszul, 3-CY $\mathcal{D}(\Phi_n,1)$ given by inhomogenous superpotential is 3-CY,\cite[Theorem 3.6]{BT}, and in the case of a one vertex quiver there is a result, \cite[Theorem 3.1]{WZ}, which finds a necessary and sufficient condition for a PBW deformation of a Noetherian, $2$-Koszul, $n$-CY algebra to be $n$-CY.

Next we give an application of these results to symplectic reflection algebras. Symplectic reflection algebras are defined in \cite{EG} as PBW deformations of  certain skew group algebras $\mathbb{C}[V] \rtimes G$, and  hence we consider the Morita equivalent path algebra with relations. Applying the previous results, and a result of \cite{EG}, we deduce that these path algebras with relations are of the form $\mathcal{D}(\Phi_{2n}+\phi_{2n-2},2n-2)$ and are $2n$-CY.

We go on to consider PBW deformations of the path algebras with relations Morita equivalent to $\mathbb{C}[\mathbb{C}^2] \rtimes G$ when $G$ is a finite subgroup of $\GL_2(\mathbb{C})$ not contained is $\SL_2(\mathbb{C})$. We show that if $G$ does not contain pseudo-reflections there are no PBW deformations.

\subsection{Main results}

1) A classification of the PBW deformations of a $(k+2)$-CY, $(n-k)$-Koszul, superpotential algebra $\mathcal{D}(\Phi_n,k)$, whose relations are given by inhomogeneous superpotentials. These are proved to be the PBW deformations satisfying one additional property, which we call the zeroPBW condition, Definition \ref{superPBW}, and the corresponding superpotentials are shown to be $k$-coherent superpotentials, Definition \ref{coherent}.

\begin{pTheorem3}(Theorem \ref{Theorem3})\label{pTheorem3}
Let $A=\mathcal{D}(\Phi_n,k)$, for $\Phi_n$ a homogeneous superpotential of degree n, be $(n-k)$-Koszul and $(k+2)$-CY. Then the zeroPBW deformations of $A$ correspond exactly to the algebras $\mathcal{D}(\Phi',k)$ defined by $k$-coherent inhomogeneous superpotentials of the form $\Phi'=\Phi_n +\phi_{n-1} + \dots + \phi_k$.
\end{pTheorem3}

2) A proof that these PBW deformations are CY in certain cases. We consider $A=\mathcal{D}(\Phi_n,n-2)$ which is $n$-CY and $2$-Koszul, and a zeroPBW deformation, $\A$, which by the previous results is of the form $\A:=\mathcal{D}(\Phi',n-2)$ for $\Phi'=\Phi_n+\phi_{n-1}+\phi_{n-2}$.

\begin{pTheoremCY}(Theorem \ref{TheoremCY}) \label{pTheoremCY}
Suppose $\phi_{n-1}=0$, then $\mathcal{A}$ is $n$-CY.
\end{pTheoremCY}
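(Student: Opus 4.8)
The plan is to construct a self-dual projective bimodule resolution of $\A$ by deforming the Bocklandt--Schedler--Wemyss complex $\mathcal{W}^{\bullet}$ attached to $\Phi_n$, and to read off the $n$-CY property from this self-duality. Because $A=\mathcal{D}(\Phi_n,n-2)$ is $2$-Koszul and $n$-CY, the characterization recalled from \cite{BSW} guarantees that $\mathcal{W}^{\bullet}$ is a length-$n$ resolution of $A$ by projective bimodules of the form $\mathcal{W}^{i}=A\otimes_E W_i\otimes_E A$, where $E$ denotes the semisimple subalgebra spanned by the vertices, and that it carries a duality $W_i\cong W_{n-i}^{*}$ under which $\mathcal{W}^{\bullet}$ is isomorphic to its own bimodule dual shifted by $n$; this self-duality is precisely what encodes $\RHom_{A^e}(A,A^e)\cong A[-n]$. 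First I would record these terms and the differentials $d_i$ explicitly, the latter being multiplication by elements extracted from the iterated derivatives of $\Phi_n$.

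Next I would define a deformed complex $\tilde{\mathcal{W}}^{\bullet}$ on the bimodules $\tilde{\mathcal{W}}^{i}=\A\otimes_E W_i\otimes_E\A$ with differentials $\tilde d_i=d_i+\ell_i$, where the corrections $\ell_i$ are built from the iterated derivatives of $\phi_{n-2}$. Note that the hypothesis $\phi_{n-1}=0$ means the defining relations $P=\langle\delta_p\Phi':|p|=n-2\rangle$ of $\A$ have the shape (quadratic) $+$ (constant) with no linear term, so that the only corrections are the degree-lowering-by-two maps coming from $\phi_{n-2}$. The identity $\tilde d^2=0$ should reduce, exactly as $d^2=0$ does for $\mathcal{W}^{\bullet}$, to the $n$ superpotential condition on $\Phi'$ together with the relations of $\A$. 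Exactness is then obtained by a filtration argument: the PBW property equips $\A$ with an ascending filtration whose associated graded algebra is $A$, the associated graded complex of $\tilde{\mathcal{W}}^{\bullet}$ is $\mathcal{W}^{\bullet}$, and since $\mathcal{W}^{\bullet}$ resolves $A$ the standard comparison of filtered complexes forces $\tilde{\mathcal{W}}^{\bullet}$ to resolve $\A$. As $\tilde{\mathcal{W}}^{\bullet}$ is then a finite resolution by projective bimodules, $\A$ is homologically smooth, so the CY property will follow once self-duality is established.

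Finally I would prove that $\tilde{\mathcal{W}}^{\bullet}$ is self-dual, from which $\RHom_{\A^e}(\A,\A^e)\cong\A[-n]$ follows; this is the main obstacle, and it is exactly here that $\phi_{n-1}=0$ is used. The key observation is that, since the deformed relations are concentrated in even length, $\A$ inherits a $\mathbb{Z}/2$-grading in which the arrows are odd and each correction $\ell_i$ is even, so that $\ell_i$ has the same parity as $d_i$. Dualizing $\tilde{\mathcal{W}}^{\bullet}$ and transporting along $W_i\cong W_{n-i}^{*}$, the dual differential becomes $d_i^{*}+\ell_i^{*}$, and I would check that under this identification it matches the differential of $\tilde{\mathcal{W}}^{\bullet}$ in the complementary homological degree up to the correct sign: the homogeneous part $d_i^{*}$ already matches by the self-duality of $\mathcal{W}^{\bullet}$, and the parity-preserving correction $\ell_i^{*}$, matched using that $\phi_{n-2}$ itself satisfies the $n$ superpotential condition, completes the identification. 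The point of the hypothesis is that a nonzero $\phi_{n-1}$ would contribute an odd, degree-lowering-by-one correction that breaks this parity matching and cannot in general be absorbed; its absence is what lets the duality of $\mathcal{W}^{\bullet}$ lift verbatim to $\tilde{\mathcal{W}}^{\bullet}$. The crux is thus the sign and parity bookkeeping verifying that $\ell_i^{*}$ equals the complementary-degree correction up to sign, with the vanishing of $\phi_{n-1}$ removing precisely the terms that would obstruct this.
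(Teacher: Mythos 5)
Your high-level architecture coincides with the paper's: replace the outer factors of the BSW complex $\mathcal{W}^{\bullet}$ by $\A$, prove exactness by passing to the associated graded via the PBW property, and deduce $n$-CY from self-duality of the resulting finite projective bimodule resolution. The genuine gap is in your construction of the differential. You set $\tilde d_i=d_i+\ell_i$ with $\ell_i$ a ``degree-lowering-by-two'' correction built from iterated derivatives of $\phi_{n-2}$. No such correction can exist: with respect to the product filtration, every nonzero element of $\A\otimes_S\mathcal{W}_{i-1}\otimes_S\A$ has filtration degree at least $i-1$ (the middle factor $\mathcal{W}_{i-1}$ is homogeneous of internal degree $i-1$), so a correction to $d_i$ can lower filtration by at most one from the degree-$i$ generators; a lowering-by-two term is forced to be zero. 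Lowering-by-one corrections are precisely the Chevalley--Eilenberg-type terms that a nonzero $\phi_{n-1}$ would require. Moreover the derivatives $\delta_{pa}\phi_{n-2}$ have internal degree $i-3$ and need not lie in any $\mathcal{W}_j=\langle\delta_q\Phi_n\rangle$, so your $\ell_i$ is not even a well-defined map between the stated bimodules; scalar inhomogeneous parts of relations manifest as curvature on the Koszul-dual side, not as corrections to the resolution differential.

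The hypothesis $\phi_{n-1}=0$ is in fact used to guarantee that \emph{no} correction is needed: the paper keeps the formulas $d_i=\epsilon_i(d_i^l+(-1)^id_i^r)$ verbatim, built from $\Phi_n$ alone, and the non-formal step is checking $d_{i-1}\circ d_i=0$ over $\A$. The composite produces outer entries equal to full relation elements $\delta_r\Phi_n$ with $|r|=n-2$, which in $\A$ equal the scalars $-\delta_r\phi_{n-2}\in S$; these slide across the $S$-balanced tensors $\otimes_S$ and cancel using the superpotential condition on $\phi_{n-2}$. Were $\phi_{n-1}\neq 0$, the leftover entries would contain linear terms $-\delta_r\phi_{n-1}\in V$, which do not slide across $\otimes_S$, so the uncorrected differential would fail to square to zero before any self-duality question arises. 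Your proposal therefore misplaces where the hypothesis enters: it is not a parity-matching constraint in the duality step. With the uncorrected differential, the self-duality verification is the same explicit computation as in the homogeneous case --- transport the pairing $\eta_j:\mathcal{W}_j\to\mathcal{W}_{n-j}^*$, $\delta_p\Phi_n\mapsto\gamma_j\partial_p$, and check the squares commute, which reduces to the solvable sign constraints $(-1)^n\gamma_{j-1}\epsilon_j=(-1)^j\gamma_j\epsilon_{n-j+1}$ on the free constants $\gamma_j$. That bookkeeping, which you defer as ``the crux'', involves no deformation terms at all; the corrections you plan to match are simply not there.
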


3)
An application to symplectic reflection algebras. Let $H$ be a path algebra with relations Morita equivalent to an undeformed symplectic reflection algebra. So $H$ is $2$-Koszul and $2n$-CY and $H=\mathcal{D}(\Phi_{2n},2n-2)$ for some homogeneous degree $2n$ superpotential $\Phi_{2n}$.

\begin{pSym}(Theorem \ref{Sym}) \label{pSym}
Any PBW deformation of $H$ is a zeroPBW deformation and of the form $\mathcal{D}(\Phi',n-2)$ for $\Phi'=\Phi_{2n}+\phi_{2n-2}$ an inhomogeneous superpotential. Hence any PBW deformation of $H$ is $2n$-CY, and all symplectic reflection algebras are Morita equivalent to $2n$-CY algebras of the form $\mathcal{D}(\Phi',2n-2)$.
\end{pSym}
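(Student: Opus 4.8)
The plan is to obtain the statement by feeding the symplectic structure into the two preceding structural theorems. Since $H$ is Morita equivalent to $\mathbb{C}[V]\rtimes G$ with $V$ a symplectic vector space of dimension $2n$ and $G$ a finite subgroup of $\Sp(V)\subset\SL(V)$, I would first observe that a Morita equivalence identifies the filtered (PBW) deformations of $H$ with the filtered deformations of $\mathbb{C}[V]\rtimes G$ having the PBW property. By the classification of \cite{EG} the latter are exactly the symplectic reflection algebras, so every PBW deformation of $H$ arises, up to Morita equivalence, from a symplectic reflection algebra, and the problem becomes one of reading off its relations.

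The crucial input is that in a symplectic reflection algebra the quadratic relations of $\mathbb{C}[V]\rtimes G$ are deformed only by a term valued in $\mathbb{C}G$, that is, a term of filtered degree $0$, with no term of degree $1$ in $V$. In the language of the $2$-Koszul algebra $H=\mathcal{D}(\Phi_{2n},2n-2)$ this says precisely that the linear part of the deformation vanishes. I would deduce this from the symplectic hypothesis: a linear, reflection-type deformation contribution is supported on group elements fixing a subspace of codimension $1$ pointwise, and such an element has a single nontrivial eigenvalue $\zeta\neq 1$, hence determinant $\zeta\neq 1$, which is impossible for $G\subset\SL(V)$; the remaining identity-supported linear contributions are ruled out by the Jacobi conditions underlying the \cite{EG} normal form. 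With the linear part absent, the deformation satisfies the zeroPBW condition of Definition \ref{superPBW}, so Theorem \ref{Theorem3}, applied with $k=2n-2$, presents it as $\mathcal{D}(\Phi',2n-2)$ for a $(2n-2)$-coherent inhomogeneous superpotential $\Phi'=\Phi_{2n}+\phi_{2n-1}+\phi_{2n-2}$, and the vanishing of the linear part is exactly the statement $\phi_{2n-1}=0$.

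The remaining conclusions are then immediate. Theorem \ref{TheoremCY}, with the role of its $n$ played by $2n$, applies verbatim to $\mathcal{A}=\mathcal{D}(\Phi_{2n}+\phi_{2n-2},\,2n-2)$ because $\phi_{2n-1}=0$, and yields that $\mathcal{A}$ is $2n$-CY. As the argument covers an arbitrary PBW deformation of $H$, every such deformation is $2n$-CY, and transporting back along the Morita equivalence shows that each symplectic reflection algebra is Morita equivalent to an algebra $\mathcal{D}(\Phi',2n-2)$ of this form, hence to a $2n$-CY algebra, independently of the deformation parameters $t$ and $c$.

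The step I expect to be the main obstacle is the vanishing $\phi_{2n-1}=0$, equivalently the absence of a linear deformation term. This is the point at which the hypothesis $G\subset\Sp(V)$ is essential rather than decorative: for a general $G\subset\GL(V)$ linear graded-Hecke-type deformations can and do occur, so the determinant obstruction above, together with the \cite{EG} analysis excluding identity-supported linear terms, is doing the real work; everything else is a translation through Morita equivalence and an application of Theorems \ref{Theorem3} and \ref{TheoremCY}.
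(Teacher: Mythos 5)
Your proposal is correct and follows essentially the same route as the paper: transport PBW deformations across the Morita equivalence, use the Etingof--Ginzburg classification (Theorem \ref{EG}) to get $\theta_1=0$ and hence the zeroPBW condition, apply Theorem \ref{Theorem3} with $k=2n-2$ to obtain $\mathcal{D}(\Phi_{2n}+\phi_{2n-2},2n-2)$, then apply Theorem \ref{TheoremCY} and transfer the CY property back along the Morita equivalence. Your extra first-principles digression on why the linear term vanishes is unnecessary (and, as you note, not self-contained, since identity-supported linear terms are not excluded by the determinant argument alone): the paper simply reads $\theta_1=0$ directly off the normal form $\kappa_{t,\mathbf{c}}(x,y)\in\mathbb{C}G$ in Theorem \ref{EG}.
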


\subsection{Contents}
We outline the structure of the paper.

Section 2: We discuss preliminaries, listing definitions and results fundamental to the rest of the paper. These concern quivers, superpotentials, CY algebras, $N$-Koszul algebras, and PBW deformations. In particular we recall a Theorem classifying PBW deformations from \cite{BG}, and several Theorems from \cite{BSW} concerning path algebras with relations defined by superpotentials and CY algebras.

Section 3: The main results of the paper are stated and proved here. We begin by making two definitions required to state our theorems, \emph{$k$-coherent} superpotentials and $\emph{zeroPBW}$ PBW deformations. We then prove Theorem \ref{Theorem3} classifying which PBW deformations of an $N$-Koszul, CY, superpotential algebra $\mathcal{D}(\Phi_n,k)$ are of the form $\mathcal{D}(\Phi',k)$ for a inhomogenous superpotential $\Phi'=\Phi_n + \phi_{n-1} + \dots + \phi_k$. We then specialise to the case of 2-Koszul, $n$-CY, $\mathcal{D}(\Phi_n,n-2)$, and consider PBW  deformations of the form $\mathcal{D}(\Phi_n+\phi_{n-2}, n-2)$, proving they are $n$-CY in \ Theorem \ref{TheoremCY}. We note known related results.

Section 4: We apply our results to symplectic reflection algebras. We recall the definition of symplectic reflection algebras, and their classification as PBW deformations by \cite{EG}. We consider the Morita equivalent path algebras with relations, which are of the form $\mathcal{D}(\Phi_{2n}+\phi_{2n-2},2n-2)$ and $2n$-CY by the previous results. We calculate several examples, including the case of preprojective algebras.

Section 5: Finally we give an analysis of PBW deformations of algebras $\mathcal{D}(\Phi_2,0)$ Morita equivalent to $\mathbb{C}[\mathbb{C}^2] \rtimes G$ for $G$ a finite subgroup of $\GL_2(\mathbb{C})$. In particular we show there are no nontrivial PBW deformations when $G$ is a small subgroup not contained in $\SL_2(\mathbb{C})$.

\subsection{Acknowledgments}
The author is an EPSRC funded student at the university of Edinburgh, and this material will form part of his PhD thesis. The author would like to express his thanks to his supervisors, Prof. Iain Gordon and Dr. Michael Wemyss, for much guidance and patience, and also to the EPSRC.

\section{Preliminaries} \label{preliminaries}

In this section we set up the definitions and give a summary of results which we use later.

\subsection{Quivers and Superpotentials} \label{quivers}
Here we give the definition of a quiver, its path algebra, and its path algebra with relations. We define certain elements of the path algebra to be superpotentials, and give a construction to create relations on the quiver from the superpotential. We are following the set up of \cite{BSW}. 
\subsubsection*{Quivers}
\begin{quiver}
A \emph{quiver} is a directed multigraph. We will denote a quiver $Q$ by $Q=(Q_0,Q_1)$, with $Q_0$ the set of vertices and $Q_1$ the set of arrows. The set of arrows is equipped with head and tail maps $h,t:Q_1 \rightarrow Q_0$ which take an arrow to the vertices that are its head and tail respectively.

A \emph{non-trivial path} in the quiver is defined to be a sequence of arrows 
\begin{equation*}
p=a_r \dots a_2 a_1 \qquad \text{with $a_i \in Q_1$ satisfying $h(a_i)=t(a_{i+1})$ for $1 \le i \le r-1$. }
\end{equation*}
\begin{center}
\begin{tikzpicture}
\node (C1) at (0,0)  {$\bullet$};
\node (C2) at (1,0)  {$\bullet$};
\node (C3) at (2,0)  {$\dots$};
\node (C4) at (3,0) {$\bullet$};
\draw[->]  (C1) edge  node[above] {$a_1$} (C2);
\draw[->]  (C2) edge  node[above] {$a_2$} (C3);
\draw[->]  (C3) edge  node[above] {$a_r$} (C4);
\end{tikzpicture}
\end{center}

We will reuse the notation of the head and tail maps for the head and tail of a path, defining $h(p)=h(a_r)$ and $t(p)=t(a_1)$ when $p=a_r \dots a_2 a_1$. We also define a \emph{trivial path} $e_v$ for each vertex $v \in Q_0$, which has both head and tail equal to $v$. A path $p$ is called \emph{closed}  if $h(p)=t(p)$. The \emph{pathlength} of a nontrivial path $p=a_r \dots a_1$, where each $a_i$ is an arrow, is defined to be $r$. A trivial path is defined to have pathlength $0$. We will denote the pathlength of a path $p$ by $|p|$.
\end{quiver}

\begin{pathalg}
Let $\K$ be a field. We define the \emph{path algebra} of the quiver $Q$, $\K Q$, as follows: $\K Q$ as a $\K$-vector space has a basis given by the paths in the quiver; an associative multiplication is defined by concatenation of paths. 
\begin{equation*}
p.q= \left\{
\begin{array}{cl}
pq & \qquad \text{if $h(q)=t(p)$} \\
0 & \qquad \text{otherwise}
\end{array} \right.
\end{equation*}

\end{pathalg}

We define $S$ to be the subalgebra of this generated by the trivial paths, and $V$ to be the $\K$ vector subspace of $\K Q$ spanned by the arrows, $a \in Q_1$. Then $S$ is a semisimple algebra, with one simple module for each vertex.

For an arrow $a$, we have $ a=e_{h(a)}.a.e_{t(a)} $, and so $V$ has the structure of a left $S^e:=S \otimes_{\K} S^{op}$ module and 
$\K Q$ can be identified with the  tensor algebra $T_S(V)=S \oplus V \oplus (V \otimes_S V) \oplus \dots$, equating the path $a_1 \dots a_r$ with $a_1 \otimes_{S} \dots \otimes_S a_r$.

The algebra $T_S(V)=\K Q$ is equipped with a grading and filtration by pathlength, the graded part in degree $n$ is $T_S(V)^n:=V ^{\otimes_{S} n}$, and the filtered part is $F^n(T_S(V)):= V^{\otimes_{S} n} \oplus \dots \oplus V \oplus S$. 

Given $R \subset T_S(V)$ we define $I(R)$ to be the two sided ideal in $T_S(V)$ generated by $R$. We then define 
\begin{equation*}
\frac{\K Q}{R}:=\frac{\K Q}{I(R)}
\end{equation*}
and refer to it as the path algebra with \emph{relations} $R$.

We define $<R>$ to be the $S^e$ submodule of $T_S(V)$ generated by $R$.

\subsubsection*{Superpotentials} We define superpotentials and twisted superpotentials in the homogeneous and  inhomogeneous cases. In the majority of this text we will be working with non-twisted superpotentials, but in Section \ref{General} we will consider the twisted case.
\begin{superpotential}
Let $\Phi_n \in T_S(V)^n$, written $\Phi_n = \Sigma_p c_p p $, where the sum is taken over all paths $p$ of pathlength $n$ with coefficients $c_p$ in $\K$. Our convention will be that $c_0=0$. For example if $t(a) \neq h(b)$ then $ab=0$ and hence $c_{ab}=c_0=0$.

We will say $\Phi_n$ satisfies the $n$ \emph{superpotential condition} if $c_{aq}=(-1)^{n-1} c_{qa}$ for all $a \in Q_1$ and paths $q$, and if $\Phi_n$ satisfies the $n$ superpotential condition we will call it a \emph{homogeneous superpotential of degree $n$}. In particular this requires $c_p=0$ if $p$ is not a closed path.

Let $\sigma \in Aut_{\K}(\K Q)$ be a graded automorphism, so that $\sigma(T_S(V)^n)=T_S(V)^n$ for all $n \ge 0$, and assume that $\sigma(h(p))=h(\sigma(p)), \, \sigma(t(p))=t(\sigma(p))$ for any path $p$. 

We will say $\Phi_n$ satisfies the \emph{twisted $n$ superpotential condition} if $c_{\sigma(a)q}=(-1)^{n-1} c_{qa}$ for $a \in Q_1$ and paths $q$. We will say $\Phi_n$ is a \emph{$\sigma$-twisted homogeneous superpotential of degree $n$} if it satisfies the \emph{twisted $n$ superpotential condition}.
\end{superpotential}

\begin{superpotential2} \label{superpotential2}
Consider an element $\Phi \in F^n(T_S(V))$ written $\Phi=\Sigma_{m \le n} \Sigma_{|p|=m} c_p p$ as above. We will say $\Phi$  is an \emph{inhomogeneous superpotential of degree $n$} if it satisfies the conditions $c_{aq}=(-1)^{n-1} c_{qa}$ for any path $q$ and arrow $a$, and for some $p$ of length $n$ the coefficient $c_p$ is non-zero. An inhomogeneous superpotential $\Phi$ can be written in homogeneous parts $\Phi=\phi_n + \phi_{n-1} + \dots +\phi_0$ where each of the $\phi_m$ satisfies the $n$-superpotential condition and $\phi_n$ is non-zero. Note that if $n$ is even, $m$ odd, and char$\K \neq 2$ then $\phi_m=0$, and also that any $\phi_0 \in S$ satisfies the $n$ superpotential condition.

We will say this is an \emph{$\sigma$-twisted inhomogeneous superpotential of degree $n$} if it satisfies $c_{\sigma(a)q}=(-1)^{n-1} c_{qa}$ for any path $q$ and arrow $a$. A twisted inhomogeneous superpotential $\Phi$ can be written in homogeneous parts $\Phi=\phi_n + \phi_{n-1} + \dots +\phi_0$ where each of the $\phi_m$ satisfies the $n$-twisted superpotential condition.
\end{superpotential2}

\subsubsection*{Differentiation} We define differentiation by paths.

\begin{diff} Let $p$ be a path in $\K Q$, and $a \in Q_1$. We then define left/right derivative of $p$ by $a$ as;
\begin{equation*}
\begin{array}{c c}

\delta_a p=\left\{\begin{array}{cl}
	q, & \qquad \text{if } p=aq \\
	0, & \qquad \text{otherwise}
	   \end{array}\right.

&

p\delta'_a =\left\{\begin{array}{cl}
	q, & \qquad \text{if } p=qa \\
	0, & \qquad \text{otherwise}
	   \end{array}\right.

\end{array}
\end{equation*}

We extend this so that if $q=a_1 \dots a_n$ is a path of length $n$ then $\delta_q= \delta_{a_n} \dots \delta_{a_1}$, and $\delta'_q=\delta'_{a_n} \dots \delta'_{a_1}$. We will call the operation $\delta_q$ left differentiation by the path $q$, and $\delta'_q$ right differentiation by the path $q$.
\end{diff}

Note that for $\Phi$  a degree $n$ inhomogeneous superpotential, $b,c \in Q_1$, and $p$ any path
 \begin{equation*}
\delta_b \Phi = (-1)^{n-1} \Phi \delta'_b \, ,
\end{equation*}
\begin{equation*}
\delta_{b} \Phi \delta'_{c} = (-1)^{n-1} \delta_{b} \delta_{c} \Phi= (-1)^{n-1} \delta_{cb} \Phi \, ,
\end{equation*}
and 
\begin{equation*}
\delta_{p} \Phi = \sum_{a \in Q_1} a \delta_a \delta_p \Phi=\sum_{a\in Q_1} \delta_p \Phi \delta'_a a \, .
\end{equation*}

\begin{diff2} \label{diff2}
Given an inhomogeneous superpotential $\Phi$ of degree $n$ we can consider the $S^e$-modules $\mathcal{W}_{n-k} \subset \K Q$ given by:
\begin{equation*}
\mathcal{W}_{n-k}=<\delta_p \Phi : |p|=k >
\end{equation*}

We then define the algebra $\mathcal{D}(\Phi,k)$ to be
\begin{equation*}
\mathcal{D}(\Phi,k):=\frac{\K Q}{I(\mathcal{W}_{n-k})}
\end{equation*}
We will call this the \emph{superpotential algebra} $\mathcal{D}(\Phi,k)$.
\end{diff2}

\begin{example2} \label{example2} 
Consider the quiver, $Q$, with one vertex, $\bullet$, and two arrows $x$ and $y$. Then $\K Q$ is the free algebra on two elements, $\K <x,y>$\\

\begin{center}
\begin{tikzpicture} [bend angle=45, looseness=1]
\node (C1) at (0,0)  {$\bullet$};
\node (C1a) at (0.1,-0.05)  {};
\node (C1b) at (-0.1,-0.05)  {};
\draw[<-]  (C1b) edge [in=250,out=180,loop,looseness=24] node[below] {$x$} (C1b);
\draw[<-]  (C1a) edge [in=-70,out=0,loop,looseness=24] node[below] {$y$} (C1a);
\end{tikzpicture}
\end{center}
 We consider some superpotentials on $Q$.
\begin{itemize}
\item[1)] $\Phi_2=xy-yx$ is a degree 2 homogeneous superpotential.
\item[2)] $\Phi=xy-yx -e_{\bullet}$ is an inhomogeneous superpotential of degree 2
\item[3)] $\Phi_3=xyx+xxy+yxx$ is a degree 3 homogeneous superpotential.
\end{itemize}
These give us the algebras
\begin{itemize}
\item[1)] $\mathcal{D}(\Phi_2,0) = \K [x,y]$, as $\mathcal{W}_2=<xy-yx>$ gives relations $R=\{xy-yx=0\}$
\item[2)] $\mathcal{D}(\Phi,0)=A_1(\K)$, the first Weyl algebra, is the quiver with relations given by $R=\{xy-yx=e_{\bullet}\}$
\item[3)] $\mathcal{D}(\Phi_3,1)$ is the quiver with relations given by $R=\{yx+xy=0, xx=0\}$
\end{itemize}
\end{example2}

\subsection{Calabi-Yau algebras}
We define Calabi-Yau algebras following the definitions of \cite{CY} and \cite{AIR}. We recall the definition of self dual used in \cite{BSW}, and note that the existence of a self dual finite projective $A^e$-module resolution of length $n$ implies an algebra is $n$-Calabi-Yau.

Let $A$ be an associative $\K$ algebra, set $A^e=A \otimes_{\K} A^{op}$ and $D(A^e)$ to be the unbounded derived category of left $A^e$ modules. We note that there are two $A^e$-module structures on $A \otimes_{\K} A$, the \emph{inner structure} given by $(a\otimes b)(x\otimes y)=(xb \otimes ay)$, and the \emph{outer structure} given by $(a \otimes b)(x \otimes y)= (ax \otimes yb)$. By considering $A^e$ with the outer structure for any $A^e$ module, $M$, we can make $\Hom_{A^e}(M,A^e)$ an $A^e$-module using the inner structure.

\begin{CY}
Let $n \ge 2$. Then $A$ is $n$ \emph{bimodule Calabi-Yau (n-CY)} if  $ A$ has a finite length resolution by finitely generated projective $A^e$-modules, and
\begin{equation*}
\RHom_{A^e}(A,A^e)[n] \cong A \quad \text{ in $D(A^e)$}
\end{equation*}
\end{CY}

In the paper \cite{BSW} self-duality of a complex is used. We give the definition here. Denote $(-)^{\vee}:=\Hom_{A^e}(-,A^e):A^e$-Mod $\rightarrow A^e$-Mod.

\begin{dual2}
Define a complex of $A^e$ modules, $\mathcal{C}^{\bullet}$, of length $n$ to be self dual, written 
\begin{equation*}
\Hom_{A^e}(\mathcal{C}^{\bullet},A^e) \cong \mathcal{C}^{n-\bullet},
\end{equation*}
if there exist $A^e$-module isomorphisms $\alpha_i$ such that the following diagram commutes:

\begin{center}
\begin{tikzpicture}[node distance=2cm, auto]
\node (P_n) {$C_n$};
\node (P_{n-1}) [right of=P_n] {$C_{n-1}$};
\node(PMid)[right of=P_{n-1}]{\dots};
\node (P_1) [right of=PMid] {$C_1$};
\node (P_0) [right of=P_1] {$C_0$};

\node (Pv_n) [below of=P_n] {$C_0^{\vee}$};
\node (Pv_{n-1}) [right of=Pv_n] {$C_1^{\vee}$};
\node(PMidv)[right of=Pv_{n-1}]{\dots};
\node (Pv_1) [right of=PMidv] {$C_{n-1}^{\vee}$};
\node (Pv_0) [right of=Pv_1] {$C_n^{\vee}$};

\draw[->] (P_n) to node {$d_n$} (P_{n-1});
\draw[->] (P_{n-1}) to node {$d_{n-1}$} (PMid);
\draw[->] (PMid) to node {$d_2$} (P_1);
\draw[->] (P_1) to node {$d_1$} (P_0);

\draw[->] (Pv_n) to node {$-d_1^{\vee}$} (Pv_{n-1});
\draw[->] (Pv_{n-1}) to node {$-d_2^{\vee}$} (PMidv);
\draw[->] (PMidv) to node {$-d_{n-1}^{\vee}$} (Pv_1);
\draw[->] (Pv_1) to node {$-d_n^{\vee}$} (Pv_0);

\draw[->] (P_n) to node {$\alpha_n$} (Pv_n);
\draw[->] (P_{n-1}) to node {$\alpha_{n-1}$} (Pv_{n-1});
\draw[->] (P_1) to node {$\alpha_{1}$} (Pv_1);
\draw[->] (P_0) to node {$\alpha_0$} (Pv_0);
\end{tikzpicture}
\end{center}
\end{dual2}

By construction the existence of a length $n$ self dual projective $A^e$-module resolution of $A$ implies that $A$ is $n$-CY, and we will use this later in Section \ref{Main Results} to show algebras are $n$-CY.

We note some properties of Calabi-Yau algebras
\begin{Calabi2}\label{Calabi2}
Let $A$ be $n$-CY $\K$-algebra, where $\K$ is algebraically closed. Then
\begin{itemize}
\item[1.] If there exists a non-zero finite dimensional $A$-module, then $A$ has global dimension $n$.
\item[2.] For $X,Y \in D(A)$ with finite dimensional total homology
\begin{equation*}
\Hom_{D(A)}(X,Y) \cong \Hom_{D(A)}(Y,X[n])^*
\end{equation*}
where $*$ denotes the standard $\K$ dual.
\end{itemize}
\end{Calabi2}
\begin{proof}
These are some of the standard properties of CY algebras, see for example \cite[Proposition 2.4]{AIR} and\cite[Section 2]{BT}  for proofs.
\end{proof}

\subsection{Koszul Algebras}
Here we give the definitions of $N$-Koszul algebras.

The concept of an $N$-Koszul algebra was introduced by Berger, and is defined and studied in the papers \cite{KNQ},\cite{Hom}, \cite{BG}, and \cite{BMN}. It generalises the concept of a Koszul algebra, which is defined here to be a 2-Koszul algebra. 

\begin{NKoszul} \label{NKoszul}

Let $S$ be a semisimple ring, $V$ a left $S^e$-module, and $T_S(V)$ the tensor algebra. An algebra is \emph{$N$-homogeneous} if it is given in the form $A=\frac{T_S (V)}{I(R)}$ for $R$ an $S^e$-submodule of  $V^{\otimes_S N}$.

We now define the \emph{Koszul $N$-complex}.
For $N$-homogeneous $A=\frac{T_S(V)}{I(R)}$ define
\begin{eqnarray*}
K_i & = & \cap_{j+N+k=i} \left( V^{\otimes j} \otimes_S R \otimes_S V^{\otimes k} \right) \qquad i \ge N \\
K_i & = & V^{\otimes_S  i}  \qquad 0<i< N \\
K_0 & = & S
\end{eqnarray*}
and let
\begin{eqnarray*}
K^i(A)=A \otimes_S K_i \otimes_S A
\end{eqnarray*}

 Now we can define an $N$-complex (In an $N$-complex $d^N=0$ rather than $d^2=0$)
\begin{equation*}
\dots \xrightarrow{d}  K^n (A) \xrightarrow{d} \dots \xrightarrow{d} K^{0} (A) \xrightarrow{\mu} A \rightarrow 0
\end{equation*}
To define the differentials let $\mu$ be multiplication, and $d_l, d_r : A \otimes_S V^{\otimes_S i} \otimes_S A \rightarrow A \otimes_S V^{\otimes_S (i-1)} \otimes_S A$ be defined by
\begin{eqnarray*}
d_l(\alpha \otimes v_1 v_2 \dots v_i \otimes \beta)= \alpha v_1 \otimes v_2 \dots v_i \otimes \beta \\
d_r(\alpha \otimes v_1 \dots v_{i-1} v_i \otimes \beta)= \alpha \otimes v_1 \dots v_{i-1} \otimes v_i \beta 
\end{eqnarray*}
As $R \subset V^{\otimes_S N}$ we see $K_i \subset V^{\otimes_S i}$ and hence can consider the restriction of $d_l,d_r$ to $K^i(A)$.
Now choose $q \in \K$ a primitive $N^{th}$ root of unity, which we need to assume exists in $\K$, and define $d: K^i(A)  \rightarrow  K^{i-1}(A)$ as $d= d_l|_{K^i(A)}-(q)^{i-1} d_r|_{K^i(A)}$.

As $d_l$ and $d_r$ commute with $d_l^N=0$ and $d_r^N=0$ $d$ defines an $N$ differential.

To such an $N$-complex we can associate a complex by contracting several terms together. This is done by splitting the $N$-complex into sections of $N$ consecutive differentials from the right. Each one of these is collapsed to a term with two differentials by keeping the right most differential, and composing the other $N-1$. This defines a complex of the form
\begin{equation*}
\dots \rightarrow  K^{N+1} (A)  \xrightarrow{d} K^{N} \xrightarrow{d^{N-1}} K^{0} (A) \xrightarrow{\mu} A \rightarrow 0
\end{equation*}
which exists regardless of the field $\K$, with $d=(d_l-d_r)|_{K^j(A)}$ and $d^{N-1}=(d_l^{N-1} + d_l^{N-2}d_r + \dots + d_r^{N-2})|_{K^j(A)}$.

We say that $A$ is \emph{$N$-Koszul} if this complex is exact, and hence gives an $A^e$-module resolution of $A$.
\end{NKoszul}

We call $2$-Koszul algebras \emph{Koszul}. In this case the $N$-complex is in fact a complex.

\subsection{Superpotentials and Higher Order Derivations} We summarise some results from the paper \cite{BSW}.  Later we will be particularly interested the applications to skew group algebras so we recall their definition and the relevant results from \cite{BSW}.
\subsubsection*{Superpotentials and Higer Order Derivations} \label{BSW}
Here we state several results from the paper \cite{BSW} concerning  superpotential algebras.
\vspace{0.4cm}

Let $Q$ be a quiver, with path algebra $\mathbb{C} Q$. 
Let $\Phi_n$ be a  homogeneous superpotential  of degree $n$, and $A=\mathcal{D}(\Phi_n,n-2)$. For $i=0, \dots n$ we have the $S^e$-modules $\mathcal{W}_{i}$ as in Definition \ref{diff2} above, and we can define a complex $\mathcal{W}^{\bullet}$  by
\begin{equation} \label{Complex}
0 \rightarrow A \otimes_S \mathcal{W}_n \otimes_S A \xrightarrow{d_n} A \otimes_S \mathcal{W}_{n-1} \otimes_S A \xrightarrow{d_{n-1}} \dots \xrightarrow{d_1} A \otimes_S \mathcal{W}_0 \otimes_S A \rightarrow 0.
\end{equation} 
To define the differential first define $d_i^l, d_i^r: A \otimes_S \mathcal{W}_i \otimes_S A \rightarrow A \otimes_S \mathcal{W}_{i-1} \otimes_S A$ by

\begin{eqnarray*}
d_i^l(\alpha \otimes \delta_p \Phi_n \otimes \beta)&= &\sum_{a \in Q_1} \alpha a \otimes \delta_a \delta_p \Phi_n \otimes \beta \\
d_i^r(\alpha \otimes\delta_p \Phi_n \otimes \beta)&=&\sum_{a\in Q_1} \alpha \otimes \delta_p \Phi_n \delta'_a \otimes a \beta
\end{eqnarray*}
for $p$ any path of length $n-i$.

The differential $d_i$ is defined as 
\begin{equation*}
d_i=\epsilon_i (d_i^l +(-1)^i d_i^r) \text{  where } \epsilon_i := \left\{
\begin{array}{c c}  
 (-1)^{i(n-i)}  & \text{if } i<(n+1)/2 \\
   1  &    \text{otherwise} \end{array}
\right.
\end{equation*} 

This  is a complex as $d^l, d^r$ commute and have square 0. We show this for $d^l$, the other case being similar. Writing $\Phi_n=\sum_{|t|=n} c_t t$, we have 
\begin{eqnarray*}
d^l_{j-1} \circ d^l_{j}(1 \otimes \delta_p \Phi_n \otimes 1) & = & \sum_{a,b} ab \otimes \delta_{ab} \delta_p \Phi_n \otimes 1\\
&=& \sum_{a,b,q} ab \otimes c_{pabq} q \otimes 1 \\
&=& \sum_{a,b,q} (-1)^{(n-1)(j-2)} c_{qpab} ab \otimes q \otimes 1 \\
&=& \sum_q (-1)^{(n-1)j} \delta_{qp} \Phi_n \otimes q \otimes 1 = 0
\end{eqnarray*}
where $a,b \in Q_1$, $p$ is a path of length $(n-j)$, $q$ a path of length $j-2$, and the sums are taken over all such $a,b$ and $q$. 

\begin{BSW1}[{\cite[Section 6]{BSW}}]\label{BSW1}
Let $A=\mathcal{D}(\Phi_n,n-2)$, then $\mathcal{W}^{\bullet}$ is a self dual complex of projective $A^e$-modules.
\end{BSW1}

\begin{BSW2}[{\cite[Theorem 6.2]{BSW}}] \label{BSW2} Let $A=\frac{\mathbb{C} Q}{R}$ be a path algebra with relations. Then A is 2-Koszul and $n$-CY if and only if $A$ is of the form $\mathcal{D}(\Phi_n,n-2)$ for some homogeneous superpotential $\Phi_n$ of degree n and the attached complex $\mathcal{W}^{\bullet}$ is a resolution of $A$. In this case the resolution equals the Koszul complex of Definition \ref{NKoszul} with each $\mathcal{W}_i=K_i$.
\end{BSW2}

The complex (\ref{Complex}) is the relevant complex for $\mathcal{D}(\Phi_n,n-2)$, where the relations are obtained by  differentiation by paths of length $n-2$. More generally, differentiating by paths of length $k$, we need another complex. Let $N=n-k$, and in this case we define an $N$-complex $\mathcal{\widetilde{W}}^{\bullet}$, again making use of the $S^e$-modules $\mathcal{W}_j$.
\begin{equation} \label{Complex 2}
0 \rightarrow A \otimes_S \mathcal{W}_n \otimes_S A \xrightarrow{d_n} A \otimes_S \mathcal{W}_{n-1} \otimes_S A \xrightarrow{d_{n-1}} \dots \xrightarrow{d_1} A \otimes_S \mathcal{W}_0 \otimes_S A \rightarrow 0
\end{equation}
with differential $d_i: A \otimes_S \mathcal{W}_i \otimes_S A \rightarrow A \otimes_S \mathcal{W}_{i-1} \otimes_S A$ defined by $d_i=d_i^l + (q)^i d_i^r$, for $q$ a primitive $N^{th}$ root of unity, which we assume exists in $\K$. This can be contracted into a $2$-complex, $\mathcal{\widehat{W}}^{\bullet}$
\begin{equation*} \label{Complex 3}
0 \rightarrow A \otimes_S \mathcal{W}_{mN+1} \otimes_S A \xrightarrow{d} A \otimes_S \mathcal{W}_{mN} \otimes_S A \xrightarrow{d^{N}} A \otimes_S \mathcal{W}_{(m-1)N+1} \otimes_S A \xrightarrow{d} \dots \xrightarrow{d} A \otimes_S \mathcal{W}_0 \otimes_S A \rightarrow 0
\end{equation*}
by composing the differentials in $\mathcal{\widetilde{W}}^{\bullet}$, as in Definition \ref{NKoszul}, where $m$ is the largest integer such that $m \le n/N$.

\begin{BSWK}[{\cite[Theorem 6.8]{BSW}}]\label{BSWK}
Let $A=\frac{\mathbb{C} Q}{R}$ be a path algebra with relations. Then $A$ is $(n-k)$-Koszul and $(k+2)$-Calabi Yau if and only if it is of the form $\mathcal{D}(\Phi_n,k)$  where $\Phi_n$ is an homogeneous superpotential of degree $n$ and $\mathcal{\widehat{W}}^{\bullet}$ is a resolution of $A$. In this case $\mathcal{\widehat{W}}^{\bullet}$ equals the Koszul $(n-k)$-complex as in Definition \ref{NKoszul}. 
\end{BSWK}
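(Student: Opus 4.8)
The plan is to treat this as the $N$-homogeneous generalisation of Theorem \ref{BSW2}, where now $N := n-k$ and the role of the $2$-complex is played by the contracted complex $\widehat{\mathcal{W}}^\bullet$ rather than $\mathcal{W}^\bullet$. Both directions hinge on one structural fact: when $\widehat{\mathcal{W}}^\bullet$ is a resolution it coincides with the Koszul $N$-complex of Definition \ref{NKoszul} and is self-dual in the sense recalled above. Granting this, the remark that a self-dual projective $A^e$-resolution of length $d$ forces $A$ to be $d$-CY supplies the Calabi--Yau conclusion, while exactness of $\widehat{\mathcal{W}}^\bullet$ is by definition $N$-Koszulity. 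Thus the whole theorem reduces to identifying $\widehat{\mathcal{W}}^\bullet$ with the Koszul complex and establishing its self-duality from the superpotential.

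For the ($\Leftarrow$) implication, suppose $A = \mathcal{D}(\Phi_n,k)$ with $\widehat{\mathcal{W}}^\bullet$ a resolution. First I would show $\mathcal{W}_i = K_i$ for every $i$: the relations are $\mathcal{W}_N = \langle \delta_p \Phi_n : |p|=k\rangle \subset V^{\otimes_S N}$, so $A$ is $N$-homogeneous, and a direct differentiation argument checks that the higher $\mathcal{W}_i$ ($i \ge N$) agree with the iterated intersections $\cap_{j+N+l=i}(V^{\otimes j}\otimes_S R \otimes_S V^{\otimes l})$ defining $K_i$. This identifies $\widehat{\mathcal{W}}^\bullet$ with the Koszul $N$-complex, so its exactness is precisely $N$-Koszulity. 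Next I would build the self-duality isomorphisms $\alpha_i$ exactly as in Theorem \ref{BSW1}: the pairing sending $\delta_p \Phi_n$ to differentiation from the opposite side is well defined and bijective precisely because $\Phi_n$ satisfies the condition $c_{aq}=(-1)^{n-1}c_{qa}$, and its compatibility with the differentials $d_i = d_i^l + q^i d_i^r$ reduces to the elementary identities $\delta_b \Phi_n = (-1)^{n-1}\Phi_n \delta'_b$ recorded after the definition of differentiation. Self-duality of a length $(k+2)$ projective resolution then yields $(k+2)$-CY.

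For the ($\Rightarrow$) implication, suppose $A$ is $N$-Koszul and $(k+2)$-CY. By $N$-Koszulity the contracted Koszul complex is the minimal projective $A^e$-resolution of $A$, whose top term involves $K_n$. Applying $(-)^\vee = \Hom_{A^e}(-,A^e)$ computes $\RHom_{A^e}(A,A^e)$, and the Calabi--Yau isomorphism $\RHom_{A^e}(A,A^e)[k+2]\cong A$ together with uniqueness of minimal resolutions forces the dual complex to be isomorphic to the original one reversed; in particular the resolution has length $k+2$ (using also the global dimension statement of Lemma \ref{Calabi2}) and is self-dual. The self-duality isomorphism on the top term forces $K_n$ to be one-dimensional over each vertex idempotent, and its generator, viewed inside $V^{\otimes_S n}$, is the sought superpotential $\Phi_n$; transporting the duality down one homological step encodes exactly the superpotential condition. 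Finally $R = K_N = \mathcal{W}_N = \langle \delta_p \Phi_n : |p|=k\rangle$ recovers $A = \mathcal{D}(\Phi_n,k)$ as in Definition \ref{diff2}.

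The main obstacle is this last extraction in the ($\Rightarrow$) direction: translating abstract self-duality of the minimal resolution into the combinatorial superpotential condition while correctly tracking the signs $\epsilon_i$, the primitive $N$-th root of unity $q$, and the degree shifts introduced when the $N$-complex $\widetilde{\mathcal{W}}^\bullet$ is contracted to $\widehat{\mathcal{W}}^\bullet$. Reconciling the duality pairing across the contraction---where a single differential of $\widehat{\mathcal{W}}^\bullet$ is an $(N-1)$-fold composite in $\widetilde{\mathcal{W}}^\bullet$---is where the bookkeeping is most delicate, and is the step I expect to absorb the bulk of the work.
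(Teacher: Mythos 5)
The paper does not prove this statement: Theorem \ref{BSWK} is imported verbatim from \cite[Theorem 6.8]{BSW} as background, just like Theorems \ref{BSW1}, \ref{BSW2} and \ref{BSW3}, so there is no internal proof to compare yours against. Your outline does follow the same overall route as the cited source: identify $\mathcal{\widehat{W}}^{\bullet}$ with the Koszul $(n-k)$-complex, read off Koszulity from exactness, obtain the Calabi--Yau property from self-duality of the resolution (via the remark after Definition \ref{dual2} that a self-dual length-$d$ projective $A^e$-resolution forces $d$-CY), and in the converse direction use the CY isomorphism together with uniqueness of minimal resolutions to produce the duality, extract the superpotential from the top term, and recover $R$ as its derivatives.

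\textbf{However, there is a genuine gap in your ($\Leftarrow$) direction.} You assert that ``a direct differentiation argument checks that the higher $\mathcal{W}_i$ ($i \ge N$) agree with the iterated intersections'' $K_i$. Only the inclusion $\mathcal{W}_i \subseteq K_i$ is a formal consequence of differentiation and the superpotential sign rule; the equality is not, and cannot be, a pure computation. Concretely, take the one-loop quiver with $\Phi_3 = x^3$ (a legitimate degree $3$ superpotential) and $k=1$: then $R=\langle x^2\rangle$, $A=\mathbb{C}[x]/(x^2)$, and $K_i = \langle x^i\rangle \neq 0$ for every $i\ge 2$, whereas $\mathcal{W}_i = 0$ for $i > 3$; the Koszul $2$-complex is infinite while $\mathcal{\widehat{W}}^{\bullet}$ stops. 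So the identification of the two complexes, and the vanishing of $K_i$ for $i>n$, are part of what the resolution hypothesis must \emph{deliver}, not something you can differentiate your way to --- yet your argument as written never invokes that hypothesis at this step, and without the identification you end up proving exactness of the wrong complex, which is not the definition of $N$-Koszulity. The standard bridge is: $\mathcal{\widehat{W}}^{\bullet}$ is a \emph{minimal} graded projective resolution (its differentials multiply by arrows), so it computes $\mathrm{Tor}^A_i(S,S)$ and shows this is concentrated in the single degrees demanded by the Tor-criterion for $N$-Koszulity; once $A$ is known to be $N$-Koszul, the Koszul complex is also a minimal resolution, and uniqueness of minimal resolutions upgrades $\mathcal{W}_i \subseteq K_i$ to equality. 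Separately, in the ($\Rightarrow$) direction you explicitly defer the extraction of the superpotential condition from self-duality across the contraction (the signs, the root of unity $q$, the $(N-1)$-fold composite differentials); you are right that this is where the bulk of the work sits in \cite{BSW}, but it means that as it stands your proposal is a correct strategy with its two hardest steps left open, one of them misdiagnosed as routine.
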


\subsubsection*{Skew Group Algebras}
Let $G$ be a finite subgroup of $\GL(V)$.  We can form the smash product $\K[V] \rtimes G$ which is the semi direct product with the action of $G$ given by that of $\GL(V)$. The multiplication of $(f_1,g_1), (f_2,g_2) \in \K[V] \rtimes G$ is given by
\begin{equation*}
(f_1,g_1).(f_2,g_2)=(f_1 f_2^{g_1}, \, g_1 g_2)
\end{equation*}

This is a graded algebra with $G$ in degree 0 and $V^*$ in degree 1, where $\K[V]=Sym(V^*)$.

For such a group $G$ and representation $V$ we can construct the McKay quiver.  We now assume our field is has characteristic not dividing the order of $G$ and is algebraically closed. The McKay quiver for $(G,V)$ has
a vertex, $i$, for each irreducible representation, $V_i$, of $G$ over $\K$, and $dim_{\K} \Hom(V_i \otimes_{\K} V, V_j)$ arrows from $i$ to $j$. 

\begin{BSW3}[{\cite[Theorem 3.2]{BSW}}]\label{BSW3}
Let $V$ be a $n$-dimensional $\mathbb{C}$-vector space and $G$ be a finite subgroup of $GL(V)$. Then $\mathbb{C}[V] \rtimes G$ is Morita equivalent to $\mathcal{D}(\Phi_n,n-2)$ for some homogeneous superpotential $\Phi_n$ of degree $n$ attached to the McKay quiver $(G,V)$. Moreover $\mathcal{D}(\Phi_n,n-2)$ is $n$-CY and Koszul for $G \le \SL(V)$.
\end{BSW3}

There is a recipe to compute a superpotential $\Phi_n$ such that $\mathbb{C}[V] \rtimes G$ is Morita equivalent to the superpotential algebra $\mathcal{D}(\Phi_n,n-2)$  attached to the McKay quiver $(G,V)$. The recipe is given in \cite[Theorem 3.2]{BSW}. When $G$ is abelian the superpotential algebra $\mathcal{D}(\Phi_n,n-2)$  is in fact isomorphic to $\mathbb{C}[V] \rtimes G$.

\subsection{PBW deformations} \label{PBW deformations}
In this subsection we will recall the definition of the PBW deformations of a graded algebra $A$. We will be considering PBW deformations in the case $A$ an $N$-Koszul $S^e$-module, relative to $S$ a semisimple algebra, and will make use of the setup and results of \cite{BG}.
\vspace{4mm}

Let $S$ be a semisimple algebra, $V$ a left $S^e$-module, and $T_S(V)$ the tensor algebra of $V$ over $S$. Consider the ($\mathbb{Z}_{\ge0}$) grading on this with degree $n$ part $T_S(V)_n=V^{\otimes_S n}$, and filtered parts $F^n= V^{\otimes_S n} \oplus \dots \oplus V \oplus S$. For $R$ an $S^e$-submodule of $V^{\otimes_S N}$ define $ I(R)=\sum_{i,j \ge 0} V^{\otimes i} R V^{\otimes j}$ to be the two sided ideal in $T_S(V)$ generated by $R$, which is graded by $I(R)_n=I(R) \cap V^{\otimes_S n}$. Define $A:=\frac{T_S(V)}{R}:=\frac{T_S(V)}{I(R)}$, and as $R$ is homogeneous this is a graded algebra with degree $n$ part
$
A_n:=\frac{V^{\otimes_S n}}{I(R)_n}$.

Now define the projection map $\pi_N: F^N \rightarrow V^{\otimes_S N}$, and  let $P$ be an $S^e$-submodule of $F^N$ such that $\pi(P)=R$.  Let $I(P)$ be the 2 sided ideal in $T_S(V)$ generated by $P$. This is not graded but is filtered by $I(P)^n = I(P) \cap F^n$, and hence $\mathcal{A}=\frac{T_S(V)}{I(P)}$ is also a filtered algebra with $\mathcal{A}^n=\frac{F^n}{I(P)^n}$.

We can construct the associated graded algebra $gr(\mathcal{A})$, which is graded with $gr(\mathcal{A})_n=\frac{\mathcal{A}^n}{\mathcal{A}^{n-1}}$. Identify $gr(\mathcal{A})_n$ with the $S^e$-module $\frac{F^n}{I(P)^n + F^{n-1}}$ by noting $I(P)^n \cap F^{n-1}=I(P)^{n-1}$, and consider the maps $\phi_n: V^{\otimes n} \rightarrow gr(\mathcal{A})_n$ defined as the composition $V^{\otimes n} \hookrightarrow F^n \rightarrow \frac{F^n}{I(P)^n + F^{n-1}}$. This allows us to define a surjective algebra morphism $\phi=\bigoplus_{n \ge 0} \phi_n : T_S(V) \rightarrow gr(\mathcal{A})$. Now $\phi_N(R)=0$ as $P+F^{N-1}=R \oplus F^{N-1}$, and hence this defines a surjective morphism of $\mathbb{Z}_{\ge 0}$-graded $S^e$-modules 
\begin{equation*}
p:A \rightarrow gr(\mathcal{A})
\end{equation*}

\begin{PBW} \label{PBWdef}
We say that $\mathcal{A}=\frac{T_S(V)}{P}$ is a \emph{PBW deformation} of $A=\frac{T_S(V)}{R}$ if $p$ is an isomorphism. We say that $P$ is of \emph{PBW type} if $\mathcal{A}$ is a PBW deformation of $A$.
\end{PBW}

The papers \cite{BT} and \cite{BG} prove a collection of conditions on $P$ equivalent to it being PBW type which we state here.

First note that for $P\subset F^N$ to be of PBW type it must be the case that $P\cap F^{N-1}=\{0\}$. Hence any PBW type $P$ can be given as $P=\{r-\theta (r) : r \in R \}$  for an $S^e$-module map  $\theta: R \rightarrow F^{N-1}$. Such a map can be written in homogeneous components as $\theta= \theta_{N-1} + \dots +\theta_0$, with $\theta_j : R \rightarrow V^{\otimes_S j}$.

Consider the map $\psi(\theta_j)$ defined for each $\theta_j$ as
\begin{eqnarray*}
\psi(\theta_j) := id \otimes \theta_j - \theta_j \otimes id : (V \otimes_{S} R) \cap (R \otimes_{S} V) \rightarrow V^{\otimes_{S} (j+1)}
\end{eqnarray*}
where $(V \otimes_{S} R) \cap (R \otimes_{S} V) \subset V^{\otimes_S (N+1)}$

\begin{BT1}[{\cite[Section 3]{BG}}] \label{BT1}
The $S^e$-module $P$ is of PBW type  if and only if the following conditions are satisfied
\begin{itemize}
\item[PBW1)] $P \cap F^{N-1}= \{ 0 \} $ 
\item[PBW2)] $\Im \, \psi(\theta_{N-1})\subset R$  
\item[PBW3)]  $\Im \, (\theta_j(\psi(\theta_{N-1})) + \psi(\theta_{j-1}) )=\{ 0\}$       for all $1 \le j \le N-1$
\item[PBW4)] $\Im \, \theta_0(\psi(\theta_{N-1}))= \{0\} $
\end{itemize}
\end{BT1}

\begin{examplePBW} \label{examplePBW}
Let $A=\mathcal{D}(\Phi_n,k)$ be a $N$-Koszul $(k+2)$-CY superpotential algebra, where $N=n-k$. Then Theorem \ref{BSWK} tells us that the Koszul $N$-complex equals the complex $\widehat{\mathcal{W}^{\bullet}}$, and $\mathcal{W}_i = K_i$, of Definition \ref{NKoszul}. Hence $ (V \otimes_{S} R) \cap (R \otimes_{S} V) = \mathcal{W}_{N+1}$, and $R=\mathcal{W}_{N}$. We can also use the expression $\delta_p\Phi_n = \sum_{a \in Q_1} a \otimes \delta_a \delta_p \Phi_n = \sum_{a \in Q_1} \delta_p \Phi_n \delta'_a \otimes a $ to give an explicit form for $\psi(\theta_j)$ with $j=0, \dots, N-1$:
\begin{eqnarray*}
\psi(\theta_j): \mathcal{W}_{N+1} &\rightarrow & V^{\otimes_S (j+1)} \\
         \delta_p \Phi & \mapsto & \sum_{a \in Q_1} a \otimes  \theta_j(\delta_a\delta_{p} \Phi) - \theta_j(\delta_p \Phi \delta'_a) \otimes a \quad \text{ where $|p|=k-1$}
\end{eqnarray*}
\end{examplePBW}

We note that in the case of $S$ being a field, and $A$ being 2-Koszul, Theorem \ref{BT1} was proved by Braverman and Gaitgory, \cite{BrG}. They also show any PBW deformation gives a graded deformation. With the notation as above we define a graded deformation of $A$,  $A_t$, to be a graded $\K[t]$ algebra with $t$ in degree 1. That is $A_t$ is free as a module over $\K [t]$ and there is an isomorphism $A_t/tA_t \rightarrow A$ . It is shown that for a PBW deformation $\mathcal{A}$ there is a graded deformation of $A$ with fibre at $t=1$ canonically isomorphic to $\mathcal{A}$, \cite[Theorem 4.1]{BrG}.\\

In the case with $\mathcal{D}(\Phi_n,n-2)$ Koszul and $S$ a field there is also the following theorem of Wu and Zhu, \cite{WZ},  which classifies when PBW deformations of a Noetherian $n$-CY algebra are also $n$-CY. In our language this  applies when there is only a single vertex in the quiver.

\begin{WZ} \label{WZ}
Let $\Phi_n$ be a homogeneous superpotential on a single vertex quiver. Let $A=\mathcal{D}(\Phi_n,n-2)$.
If $A$ is $n$-CY Noetherian and Koszul and $\mathcal{A}$ is a PBW deformation given by $\theta_0,\theta_1$ then $\mathcal{A}$ is $n$-CY if and only if
\begin{equation*}
\sum^{n-2}_{i=0} (-1)^i id^{\otimes i} \otimes \theta_1 \otimes id^{\otimes(n-2-i)}(\Phi_n)=0
\end{equation*}
\end{WZ}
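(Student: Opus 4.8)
The plan is to \emph{deform the self-dual Koszul resolution} of $A$ into a finite free resolution of $\mathcal{A}$ and to read off the obstruction to preserving its self-duality. Since $A$ is Koszul and $n$-CY, Theorems \ref{BSW1} and \ref{BSW2} provide a length-$n$ complex $\mathcal{W}^{\bullet}$ of finitely generated projective $A^e$-modules resolving $A$, self-dual in the sense of Definition \ref{dual2}, with $\mathcal{W}_i=K_i$; in the single-vertex case $\mathcal{W}_n=\langle \Phi_n\rangle$ is one-dimensional and $\mathcal{W}_{n-1}=\langle \delta_a\Phi_n : a\in Q_1\rangle$. I would then lift $\mathcal{W}^{\bullet}$ to a complex $\mathcal{P}^{\bullet}$ of free $\mathcal{A}^e$-modules on the same underlying modules $\mathcal{A}\otimes_S \mathcal{W}_i\otimes_S \mathcal{A}$, with differentials $\widetilde d_i = d_i + (\text{corrections of lower filtration degree determined by } \theta_0,\theta_1)$. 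The PBW conditions of Theorem \ref{BT1}, equivalently the Braverman--Gaitsgory flatness recorded above, are exactly what guarantee $\widetilde d^2=0$ and $\mathrm{gr}\,\mathcal{P}^{\bullet}=\mathcal{W}^{\bullet}$; exactness of $\mathcal{P}^{\bullet}$ then follows because its associated graded is exact and the filtration is exhaustive and bounded below, the Noetherian hypothesis keeping the comparison between $\mathcal{A}$, its Rees algebra, and $A=\mathrm{gr}\,\mathcal{A}$ within finitely generated modules. Thus $\mathcal{P}^{\bullet}$ is a finite free resolution of $\mathcal{A}$ over $\mathcal{A}^e$.

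Next I would test $n$-Calabi--Yau-ness by dualizing $\mathcal{P}^{\bullet}$. Each term $\mathcal{A}\otimes_S\mathcal{W}_i\otimes_S\mathcal{A}$ dualizes under $(-)^{\vee}$ to $\mathcal{A}\otimes_S\mathcal{W}_i^{*}\otimes_S\mathcal{A}$, and the undeformed superpotential pairing supplies isomorphisms $\mathcal{W}_i^{*}\cong\mathcal{W}_{n-i}$, hence candidate vertical maps $\alpha_i$ for the self-duality diagram of Definition \ref{dual2}. Because the corrections to the differentials are of strictly lower filtration degree while the $\alpha_i$ already intertwine the leading terms (the content of Theorem \ref{BSW1}), every interior square of that diagram commutes automatically. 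The only square that can fail is the one comparing the top differential $\widetilde d_n$ with $-\widetilde d_1^{\vee}$: the leading correction to $\widetilde d_n$ is obtained by applying $\theta_1$ to the two-slices of the generator $\Phi_n$ of $\mathcal{W}_n$, and since $\mathcal{W}_n$ is one-dimensional this correction is precisely $\sum_{i=0}^{n-2}(-1)^i\,\mathrm{id}^{\otimes i}\otimes\theta_1\otimes\mathrm{id}^{\otimes(n-2-i)}(\Phi_n)\in\mathcal{W}_{n-1}$, with the alternating signs coming from the Koszul sign built into $d$ and into the self-dual pairing (cf. the explicit form of $\psi(\theta_j)$ in Example \ref{examplePBW}). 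When this element vanishes the maps $\alpha_i$ make $\mathcal{P}^{\bullet}$ self-dual, and by the remark following Definition \ref{dual2} a length-$n$ self-dual projective resolution forces $\mathcal{A}$ to be $n$-CY. Notably $\theta_0$ does not enter, reflecting that it corrects a differential away from the top and so cannot obstruct duality; vanishing of the displayed sum is also exactly the condition that $\theta_1$ be the degree-lowering component of the differential of an inhomogeneous superpotential term $\phi_{n-1}$, linking this statement to the coherence condition used in Theorem \ref{Theorem3}.

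For the converse I would argue that when the displayed sum is nonzero, \emph{no} choice of resolution or comparison maps can restore self-duality, so that $\mathcal{A}$ genuinely fails to be $n$-CY. Computing $\RHom_{\mathcal{A}^e}(\mathcal{A},\mathcal{A}^e)$ from $\mathcal{P}^{\bullet}$, the top cohomology is the cokernel governed by $\widetilde d_n^{\vee}$, and the nonzero correction obstructs the isomorphism $\RHom_{\mathcal{A}^e}(\mathcal{A},\mathcal{A}^e)[n]\cong\mathcal{A}$ required by the definition of $n$-CY. To make this rigorous, and independent of the chosen $\alpha_i$, I would interpret the correction as a Hochschild cohomology class representing the first-order deformation of the Nakayama automorphism, $\mathcal{A}$ being $n$-CY precisely when that class is trivial. \textbf{The main obstacle I anticipate is this converse direction}: showing the obstruction is intrinsic rather than an artefact of the particular lift $\mathcal{P}^{\bullet}$, and that a nonzero first-order term in the Nakayama automorphism cannot be cancelled by a higher-order correction. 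Here I would lean on the Noetherian hypothesis and the duality of Lemma \ref{Calabi2}, together with the rigidity of the one-dimensional top term $\mathcal{W}_n$, to conclude that the obstruction is already detected at first order and cannot be killed.
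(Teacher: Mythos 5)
You should first be aware that the paper does not actually prove this statement: it is quoted as Wu--Zhu's theorem, and the paper's entire proof consists of citing \cite[Theorem 3.1]{WZ} and then translating that result into the present language by observing that, for a single-vertex quiver, $\cap_i V^{\otimes i}\otimes R\otimes V^{\otimes (n-2-i)}=\mathcal{W}_n=\langle \Phi_n\rangle$, so that Wu--Zhu's condition (the alternating-sum map vanishes on this intersection) becomes the displayed equation. Your proposal instead attempts to reprove the Wu--Zhu theorem from scratch, which is a genuinely different and far more ambitious route; unfortunately it has two real gaps.

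First, in the forward direction, the claim that ``every interior square of the duality diagram commutes automatically'' because the corrections to the differentials have lower filtration degree is not a valid principle: commutativity of squares of filtered maps does not follow from commutativity on the associated graded level, and the lower-order parts of $\widetilde d_i$ and of $-\widetilde d_{n-i+1}^{\vee}$ must be matched by the $\alpha_i$ through explicit computation. The paper's own proof of Theorem \ref{TheoremCY} illustrates this: even in the much simpler case $\theta_1=0$, where the deformed differentials are given by literally the same formulas as the undeformed ones, establishing self-duality requires evaluating both composites on generators and choosing the scalars $\gamma_j$ to satisfy a sign constraint; nothing there is automatic. (In addition, the existence of the lift $\mathcal{P}^{\bullet}$ with $\widetilde d^{\,2}=0$ and associated graded equal to $\mathcal{W}^{\bullet}$ is asserted rather than constructed; for quadratic-linear-scalar algebras this is nontrivial Braverman--Gaitsgory/Positselski theory and needs to be invoked or proved.) Second, and more seriously, the converse direction is genuinely missing. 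As the paper notes after Definition \ref{dual2}, existence of a length-$n$ self-dual projective resolution is a \emph{sufficient} condition for being $n$-CY, but it is nowhere claimed to be necessary; hence showing that your particular comparison maps $\alpha_i$ fail to intertwine $\widetilde d_n$ with $-\widetilde d_1^{\vee}$ does not show that $\RHom_{\mathcal{A}^e}(\mathcal{A},\mathcal{A}^e)[n]\not\cong\mathcal{A}$ in the derived category, since an entirely different chain map could realize the isomorphism. Your suggested repair via a ``first-order deformation of the Nakayama automorphism'' is exactly where the real content lies: making that precise (rigid dualizing complexes, homological determinants, and the Noetherian connected-graded machinery) is the substance of Wu and Zhu's paper, and without it you have at best one implication of the stated equivalence.
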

\begin{proof}
 See \cite[Theorem 3.1]{WZ}. This proves such a  PBW deformation is $n$-CY if and only if
\begin{eqnarray*}
\sum^{n-2}_{i=0} (-1)^i id^{\otimes i} \otimes \theta_1 \otimes id^{\otimes(n-2-i)}: \cap_i V ^{\otimes i} \otimes R \otimes V^{\otimes (n-2-i)} \rightarrow V^{\otimes n-1}
\end{eqnarray*}
is 0. Note that $\cap_i V ^{\otimes i} \otimes R \otimes V^{\otimes (n-2-i)}=\mathcal{W}_n=<\Phi_n>$ in this case, giving the result.
\end{proof}

\section{Main results} \label{Main Results}
We state and prove our main results concerning the PBW deformations of an $(k+2)$-CY, $(n-k)$-Koszul, superpotential algebra $A:=\mathcal{D}(\Phi_n,k)$, where $\Phi_n$ is a homogeneous superpotential. We will prove a condition for a PBW deformation to be of the form $\mathcal{D}(\Phi',k)$ for an inhomogeneous superpotential $\Phi'=\Phi_n+ \phi_{n-1}+ \dots + \phi_k$ , and prove that inhomogeneous superpotentials with degree $n$ part $\Phi_n$ define PBW deformations of $A$. In the Koszul case we will show that certain PBW deformations of such an $n$-CY algebra are also $n$-CY. 

Throughout this section consider $\Phi_n\in \mathbb{C}Q$ to be a homogeneous superpotential of degree $n$ on some quiver $Q$. This will have inhomogenous superpotentials associated to it, denoted $\Phi'=\Phi_n +\phi_{n-1} + \dots +\phi_k$. 

We introduce two new definitions we make use of in the proof:
\begin{coh} \label{coherent}
We will call $\Phi'$, an inhomogeneous superpotential, \emph{$k$-coherent} if for any $\lambda_p \in \mathbb{C}$
\begin{equation*} 
\sum_{|p|=k} \lambda_p \delta_p \Phi_n=0 \in \mathbb{C} Q \Rightarrow \sum_{|p|=k} \lambda_p \delta_p \Phi'=0 \in \mathbb{C} Q 
\end{equation*}
\end{coh}

\begin{superp} \label{superPBW} 
Let $\A=\frac{T_S(V)}{P}$ and $A=\frac{T_S(V)}{R}$ be as in Definition \ref{PBWdef}. We will say that $\A$ is a \emph{zeroPBW deformation} of $A$ if it is a PBW deformation which also satisfies the \emph{zeroPBW condition}
\begin{equation*}
 \Im(\psi(\theta_{N-1}))=\{0\}.
\end{equation*}
where $\psi$ and $\theta$ are defined as in Section \ref{PBW deformations}.

We will say $P$ is of \emph{zeroPBW type} if $\A$ is a zeroPBW deformation of $A$.
\end{superp}

\begin{zeroPBW} \label{zeroPBW}
An $S^e$ module $P$ is of zeroPBW type if and only if
\begin{itemize}
\item[PBW1)] $P \cap F^{N-1} = \{ 0 \}$
\item[ZPBW)] $\Im(\psi(\theta_j))=\{ 0 \}$     for $j= 0 \dots N-1$.
\end{itemize}
\end{zeroPBW}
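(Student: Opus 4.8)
The strategy is to read off the result directly from the PBW-type characterisation of Theorem \ref{BT1}, using the single observation that the extra zeroPBW condition $\Im\,\psi(\theta_{N-1})=\{0\}$ \emph{decouples} the mixed conditions PBW3 and PBW4. By definition, $P$ is of zeroPBW type precisely when it is of PBW type (equivalently, PBW1--PBW4 hold) \emph{and} $\Im\,\psi(\theta_{N-1})=\{0\}$. Since $\psi(\theta_{N-1})$ is an $S^e$-module map, the condition $\Im\,\psi(\theta_{N-1})=\{0\}$ is the same as $\psi(\theta_{N-1})=0$ as a map, and this is exactly what allows the terms $\theta_j(\psi(\theta_{N-1}))$ appearing in PBW3 and PBW4 to be eliminated. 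So I would prove both implications by substituting $\psi(\theta_{N-1})=0$ and tracking the index ranges.

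\textbf{Forward direction.} Suppose $P$ is of zeroPBW type. Then PBW1 holds immediately, giving the first condition of the lemma. The defining zeroPBW condition is $\Im\,\psi(\theta_{N-1})=\{0\}$, which is the $j=N-1$ instance of ZPBW. To obtain the remaining instances, I would feed $\psi(\theta_{N-1})=0$ into PBW3: the term $\theta_j(\psi(\theta_{N-1}))$ vanishes, so PBW3 collapses to
\begin{equation*}
\Im\,\psi(\theta_{j-1})=\{0\}\qquad\text{for all }1\le j\le N-1,
\end{equation*}
i.e. $\Im\,\psi(\theta_i)=\{0\}$ for $i=0,\dots,N-2$. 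Combined with the $i=N-1$ case this is precisely ZPBW for all $j=0,\dots,N-1$.

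\textbf{Converse.} Suppose PBW1 and ZPBW hold. The $j=N-1$ case of ZPBW gives $\Im\,\psi(\theta_{N-1})=\{0\}$, which is simultaneously the extra zeroPBW condition and, since $\{0\}\subset R$, establishes PBW2. With $\psi(\theta_{N-1})=0$ in hand, PBW4 reads $\Im\,\theta_0(0)=\{0\}$, which holds trivially, and PBW3 reduces to $\Im\,\psi(\theta_{j-1})=\{0\}$, which is again ZPBW. Thus PBW1--PBW4 all hold, so $P$ is of PBW type, and as it also satisfies the zeroPBW condition it is of zeroPBW type. I do not expect a genuine obstacle here: the content is entirely in recognising that $\psi(\theta_{N-1})=0$ annihilates the coupling terms, after which PBW3 telescopes into the uniform statement of ZPBW; the only point demanding care is matching the index shift $j\mapsto j-1$ between PBW3 and ZPBW.
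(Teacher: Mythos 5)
Your proposal is correct and follows exactly the paper's route: invoke Theorem \ref{BT1} and observe that the zeroPBW condition $\psi(\theta_{N-1})=0$ collapses PBW2), PBW3), PBW4) into the single condition ZPBW). The paper states this reduction in one sentence without verification; your substitution into PBW3) and PBW4) in both directions, including the index shift and the observation that $\{0\}\subset R$ gives PBW2) for free, is just a fuller writing-out of the same argument.
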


\begin{proof}
By definition $P$ is of zeroPBW type if and only if it is of PBW type and also satisfies the zeroPBW condition. It is of PBW type if and only if it satisfies conditions  PBW1,2,3,4) of Theorem \ref{BT1}. Satisfying the the zeroPBW condition is equivalent to reducing the conditions PBW2,3,4) to the condition ZPBW).
\end{proof}

\subsection{Deformations of superpotential algebras} We state and prove our results relating superpotentials and PBW deformations.

\begin{Theorem3}\label{Theorem3}
Let $A=\frac{\mathbb{C}Q}{R}$ be a path algebra with relations which is $(n-k)$-Koszul and $(k+2)$-CY. Then $A=\mathcal{D}(\Phi_n,k)$, for $\Phi_n$ a homogeneous superpotential of degree n, and the zeroPBW deformations of $A$ correspond exactly to the algebras $\mathcal{D}(\Phi',k)$ defined by $k$-coherent inhomogeneous superpotentials of the form $\Phi'=\Phi_n +\phi_{n-1} + \dots + \phi_k$.
\end{Theorem3}

\begin{proof}
As $A$ is $(k+2)$-CY and $(n-k)$-Koszul by Theorem \ref{BSWK} $A=\mathcal{D}(\Phi_n,k)$ for $\Phi_n$ a degree $n$ homogeneous superpotential.

We define inverse maps between $P$ of zeroPBW type and coherent superpotentials $\Phi'$ which will give us the correspondence. We note that zeroPBW deformations exactly correspond to $P$ of zeroPBW type.\\

We first define a map, $\mathcal{F}$, taking $P$ of zeroPBW type to $k$-coherent superpotentials. We define $\mathcal{F}(P)$ as an element of $\mathbb{C}Q$, then show it is $k$-coherent and a superpotential.

Any zeroPBW deformation defined by $P$ is given by a map $\theta=\theta_{n-k-1} + \dots + \theta_0$ with $\theta_j:R \rightarrow V^{\otimes_S j}$ such that $P$ and $\theta$ satisfy the conditions PBW1) and ZPBW) of Lemma \ref{zeroPBW}. We construct $\mathcal{F}(P):=\Phi_n + \phi_{n-1}(\theta_{n-k-1}) + \dots + \phi_{k}(\theta_0)$ by defining

\begin{equation*}
\phi_{n-j}(\theta_{n-k-j}):=-\sum_{|p|=k} p \theta_{n-k-j}(\delta_p \Phi_n)=: \sum_{|s|=n-j}c_s s
\end{equation*}
and show that this is a $k$-coherent superpotential. We note that by this definition $\delta_p \phi_{n-j}(\theta_{n-k-j})=-\theta_{n-k-j}(\delta_p \Phi_n)$ for any $|p|=k$.

Firstly $\mathcal{F}(P)$ is not $k$-coherent precisely when there exist coefficients $\lambda_p \in \mathbb{C}$ such that $\sum_{|p|=k} \lambda_p \delta_p \Phi_n =0$ with $\omega=\sum_{|p|=k} \lambda_p \delta_p \Phi'(\theta) \neq 0$. But this only occurs when there exists $\omega \in P \cap F^{n-1}$ which is non zero. But as $P$ is of zeroPBW type PBW1) holds, and $P \cap F^{n-1} = \{0\}$. Hence $\mathcal{F}(P)$ is $k$-coherent.

Now we show that $\mathcal{F}(P)$ is a superpotential. As $P$ is of zeroPBW type by Lemma \ref{zeroPBW} $\Im \psi (\theta_j)=\{ 0 \}$ for $j=0, \dots , n-k-1$ .  We evaluate $\psi(\theta_j)$ on elements of the form $\delta_q \Phi_n$ where $|q|=k-1$, as in Example \ref{examplePBW},  and use $\theta_j(\delta_p \Phi_n) = - \delta_p \phi_{k+j}(\theta_j)$ to deduce that $\mathcal{F}(P)$ is a superpotential.

\begin{eqnarray*}
0=\psi(\theta_j)(\delta_q \Phi_n) &=& \sum_{a} \big( a \theta_j(\delta_{q a}\Phi_n) +(-1)^{n} \theta_j(\delta_{a q} \Phi_n) a \big) \\
&=& -\sum_{a} \big( a \delta_{qa} \phi_{k+j}(\theta_j) +(-1)^{n} \delta_{aq} \phi_{k+j}(\theta_j) a \big)\\
 & = & -\sum_{a,p} \big( c_{q a p} a p +(-1)^{n} c_{a q p}  p a \big)\\
& = & -\sum_{a,p} \big( (c_{q a p_1 \dots p_j } + (-1)^{n} c_{p_j q p_1 \dots p_{j-1}})ap \big)
\end{eqnarray*}
with the sums over all $a \in Q_1$ and $p=p_1 \dots p_j$ of length $j$.

Hence considering the coefficient of a path $ap$ we see $\phi_{k+j}=\sum_s c_s s$ satisfies the  $n$ superpotential condition
\begin{equation*}
c_{q a p_1 \dots p_j} = (-1)^{n-1} c_{p_j q a p_1 \dots p_{j-1}}
\end{equation*}

Hence each $\phi_{j+k}$ satisfies the $n$ superpotential condition, and $\mathcal{F}(P)$ is a inhomogenous superpotential of degree $n$.

We note that the zeroPBW deformation defined by $P$ is, by construction, $\mathcal{D}(\mathcal{F}(P),k)$ as $P=< \{ r-\theta (r) : r \in R \} >= <\{ \delta_p \mathcal{F}(P) : |p|=k \}>$.\\

Now we define a map, $\mathcal{G}$, sending $k$-coherent superpotentials $\Phi'=\Phi_n + \phi_{n-1}+ \dots +\phi_{n-k}$ to $P$ of zeroPBW type. We first define $\mathcal{G}(\Phi')$ as an $S^e$-module, and define a map $\theta^{\Phi'}$ as in Lemma \ref{zeroPBW}. We show that $\mathcal{F}(\Phi')$ satisfies PBW1), hence $\theta^{\Phi'}$ is well defined. We then show that $\mathcal{G}(\Phi')$ is of zeroPBW type.

This map is defined by $\mathcal{G}(\Phi'):=< \{ \delta_p \Phi' : |p|=k \} >$. We also define maps $\theta^{\Phi'}$,  by $\theta^{\Phi'}:=\sum_{j=0}^{n-k-1} \theta_j^{\phi_{j+k}}$, with 
\begin{eqnarray*}
\theta_j^{\phi_{j+k}}:R & \rightarrow &  V^{\otimes_S j} \\
                \delta_p \Phi_n  & \mapsto & -\delta_p\phi_{j+k}
\end{eqnarray*}
such that $\mathcal{G}(\Phi')=< \{ r-\theta^{\Phi'}(r) : r \in R \}>$, and $\theta_j^{\phi_{j+k}}(\delta_p \Phi_n)=-\delta_p\phi_{j+k}$ for any $|p|=k$.

Firstly we check $\mathcal{G}(\Phi')$ satisfies PBW1), and hence $\theta^{\Phi'}$ is a well defined function. Indeed as $\Phi'$ is $k$-coherent there are no $\omega=\sum_{|p|=k} \lambda_p \delta_p \Phi' \neq 0$, with $\lambda_p \in \mathbb{C}$,  satisfying $\sum_{|p|=k} \lambda_p \delta_p \Phi_n =0$. This implies $P \cap F^{n-1}=\{0\}$, hence $P$ satisfies PBW1).

We next show that $\mathcal{G}(\Phi')$ is of zeroPBW type. By Lemma \ref{zeroPBW}, to show that $\mathcal{G}(\Phi')$ is of zeroPBW type it is enough to show that $\mathcal{G}(\Phi')$ and $\theta^{\Phi'}$ satisfy the conditions PBW1) and ZPBW). We have already shown PBW1) is satisfied, so need only check ZPBW); $\Im \psi (\theta_j^{\phi_{j+k}})=\{ 0 \}$ for $j=0, \dots , n-k-1$. Hence we calculate $\psi(\theta_j^{\phi_{j+k}})(\delta_q \Phi_n)$, for $j=0, \dots, n-k-1$, where $|q|=k-1$.

\begin{eqnarray*}
\psi(\theta_j^{\phi_{j+k}})(\delta_q \Phi_n) &=& \sum_{a} \big( a \theta_j^{\phi_{j+k}}(\delta_{q a}\Phi_n) +(-1)^{n} \theta_j^{\phi_{j+k}}(\delta_{a q} \Phi_n) a \big) \\
&=& -\sum_{a} \big(  a \delta_{qa} \phi_{j+k} +(-1)^{n} \delta_{aq} \phi_{j+k} a \big) \\
 & = & -\sum_{a,p} \big( c_{q a p} a p +(-1)^{n} c_{a q p}  p a \big) \\
& = & -\sum_{a,p} \big( (c_{q a p_1 \dots p_j } + (-1)^{n} c_{p_j q p_1 \dots p_{j-1}})ap \big)
\end{eqnarray*}
where we write  $\phi_j = \sum_{|t|=j} c_t t$. and the sums are over all $a \in Q_1$ and $p=p_1 \dots p_j$ of length $j$.
Hence as $\phi_j$ satisfies the $n$ superpotential condition
\begin{equation*}
c_{q a p_1 \dots p_r} = (-1)^{n-1} c_{p_r q a p_1 \dots p_{r-1}}
\end{equation*}
then $\psi(\theta_j^{\phi_{j+k}})(\delta_q \Phi_n)=0$ for all $q$ and $j$. Therefore $\Im \psi(\theta_j^{\phi_{k+j}})=\{ 0 \}$ for all $j$, and $\mathcal{G}(\Phi')$ is of zeroPBW type.

We note that as $\mathcal{G}(\Phi')=<\{\delta_p \Phi' : |p|=k \}>$ by construction the zeroPBW deformation defined by $\mathcal{G}(\Phi')$ is indeed $\mathcal{D}(\Phi', k)$.\\

Now the maps $\mathcal{F}$ and $\mathcal{G}$ are inverses by construction, and provide a bijection between zeroPBW deformations and $k$-coherent superpotentials of the form $\Phi_n + \phi_{n-1}+ \dots + \phi_k$. Moreover the zeroPBW deformation defined by $P$ equals $\mathcal{D}(\mathcal{F}(P),k)$, and the zeroPBW deformation defined by $\mathcal{G}(\Phi')$ equals $\mathcal{D}(\Phi',k)$. Hence the theorem is proved.
\end{proof}

\begin{Cor1} \label{Cor1}
Let $A=\mathcal{D}(\Phi_n, n-2)$  be a $2$-Koszul $n$-CY algebra, and $\mathcal{A}=\mathcal{D}(\Phi',n-2)$, as in Theorem \ref{Theorem3},  be a zeroPBW deformation of  defined by $\theta_1, \theta_0$. Then if $n$ is even $\theta_1=0$.
\end{Cor1}
\begin{proof}
Note that if $n$ is even then any $\phi_{n-1}$ satisfying the $n$ superpotential condition is 0, as observed after Definition \ref{superpotential2}. In particular a zeroPBW deformation defined by $\theta_0,\theta_1$ corresponds to a superpotential $\Phi_n+\phi_{n-1}+\phi_{n-2}$ by the bijection of Theorem \ref{Theorem3}, and superpotentials with $\phi_{n-1}=0$ correspond to zeroPBW deformations with $\theta_1=0$.  Hence for even $n$ we have $\phi_{n-1}=0$ and so $\theta_1=0$.
\end{proof}

\subsection{CY property of deformations}
We now consider when zeroPBW deformations are CY. We consider the case where $A=\mathcal{D}(\Phi_n,n-2)$ is $n$-CY and $2$-Koszul. We currently prove a result only in the case of a zeroPBW deformation with $\theta_1=0$, which - by Remark \ref{Cor1} - covers all even dimensional cases.

We then briefly mention two results concerning PBW deformations of CY algebras. Our results are weaker, but in a more general setting. One is the result of \cite[Theorem 3.1]{WZ}, which is quoted as Theorem \ref{WZ} above, giving a complete characterisation of CY PBW deformations of a Noetherian CY algebra over a field. The other is the paper \cite{BT} which proves that the zeroPBW deformations of a $3$-CY superpotential algebra are precisely the $3$-CY PBW deformations.

\begin{TheoremCY} \label{TheoremCY}
Let $\mathcal{A}$ be a zeroPBW deformation of $A$ with $\theta_1=0$. Then $\mathcal{A}$ is $n$-CY.
\end{TheoremCY}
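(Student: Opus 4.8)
We have $A = \mathcal{D}(\Phi_n, n-2)$, which is $2$-Koszul and $n$-CY, and a zeroPBW deformation $\mathcal{A} = \mathcal{D}(\Phi', n-2)$ with $\Phi' = \Phi_n + \phi_{n-2}$ (since $\theta_1 = 0$ forces $\phi_{n-1}=0$). To prove $\mathcal{A}$ is $n$-CY, I would exploit the criterion established just after Definition \ref{dual2}: it suffices to construct a \emph{self-dual} length-$n$ resolution of $\mathcal{A}$ by finitely generated projective $\mathcal{A}^e$-modules. Since $A$ is $2$-Koszul and $n$-CY, Theorem \ref{BSW2} gives us the self-dual complex $\mathcal{W}^\bullet$ resolving $A$, with terms $\mathcal{A}^e \otimes_{S^e} \mathcal{W}_i$ and $\mathcal{W}_i = K_i$. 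The plan is to deform this complex: replace the homogeneous superpotential $\Phi_n$ by the inhomogeneous $\Phi'$ in the definition of the differentials, obtaining a filtered complex $\mathcal{W}^\bullet_{\Phi'}$ of projective $\mathcal{A}^e$-modules whose associated graded is exactly $\mathcal{W}^\bullet$ for $A$.

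\textbf{Key steps.} First I would write down the candidate deformed differentials $\widetilde{d}_i = \epsilon_i(\widetilde{d}^l_i + (-1)^i \widetilde{d}^r_i)$, where the maps are defined by the same formulas as for $\mathcal{W}^\bullet$ but differentiating $\Phi'$ in place of $\Phi_n$; concretely $\widetilde{d}^l_i(\alpha \otimes \delta_p\Phi' \otimes \beta) = \sum_{a} \alpha a \otimes \delta_a\delta_p \Phi' \otimes \beta$ and similarly for $\widetilde{d}^r_i$, now with $|p|$ ranging appropriately and the relations taken in $\mathcal{A}$. The crucial point is to verify that this is genuinely a complex, i.e. $\widetilde{d}_{i-1}\widetilde{d}_i = 0$ in $\mathcal{A}^e \otimes_{S^e} \mathcal{W}_{i-2} \otimes \mathcal{A}$; this is where the $n$-superpotential condition on \emph{each} homogeneous piece $\phi_j$ of $\Phi'$ enters, exactly as in the computation displayed before Theorem \ref{BSW1}, but now cross-terms between $\Phi_n$ and $\phi_{n-2}$ must also cancel. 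Here the hypothesis $\theta_1 = 0$ (equivalently $\phi_{n-1}=0$) is what removes the degree-$(n-1)$ middle terms that would otherwise obstruct the cancellation. Second, I would establish self-duality: the isomorphisms $\alpha_i$ of Definition \ref{dual2} for $\mathcal{W}^\bullet$ should lift to $\mathcal{A}^e$-module isomorphisms making the deformed diagram commute, again using that $\Phi'$ is an inhomogeneous superpotential so that $\delta_b \Phi' = (-1)^{n-1}\Phi'\delta'_b$ holds verbatim. Third, I would invoke exactness: since $\mathcal{A}$ is a PBW deformation, $\mathrm{gr}(\mathcal{A}) \cong A$, the associated graded of $\mathcal{W}^\bullet_{\Phi'}$ is the resolution $\mathcal{W}^\bullet$ of $A$, and a standard filtered-to-graded argument (the associated graded complex being exact, together with boundedness and projectivity) forces $\mathcal{W}^\bullet_{\Phi'}$ to be a resolution of $\mathcal{A}$.

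\textbf{Main obstacle.} I expect the hardest step to be verifying that the deformed differentials square to zero, because the inhomogeneity of $\Phi'$ produces mixed terms of different pathlengths that must cancel term-by-term, and one must track the signs $\epsilon_i$ and $(-1)^i$ carefully across components of differing degree. The condition $\theta_1 = 0$ is doing real work here: it is precisely the vanishing of the degree-$(n-1)$ correction, and I would isolate why its presence would break the $d^2 = 0$ identity (or the self-duality) while its absence preserves it. A secondary subtlety is ensuring the deformed complex consists of \emph{finitely generated} projective $\mathcal{A}^e$-modules; this follows because the $\mathcal{W}_i$ are finitely generated $S^e$-modules and $\mathcal{A}^e \otimes_{S^e}(-)$ preserves finite generation and projectivity, but it should be stated. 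Once $d^2=0$, self-duality, and exactness are in hand, the self-dual resolution criterion yields that $\mathcal{A}$ is $n$-CY.
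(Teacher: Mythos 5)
Your overall route is the paper's route: deform the complex $\mathcal{W}^{\bullet}$ over $\mathcal{A}$, prove it is still a complex, get exactness from $\mathrm{gr}(\mathcal{A})\cong A$ by the filtered-to-graded argument, establish self-duality, and invoke the self-dual resolution criterion. But your implementation diverges from the paper's at exactly the step you flag as crucial, and there it has a genuine gap. The paper does \emph{not} substitute $\Phi'$ into the differentials. Its complex $\mathcal{W}^{\bullet}_{\mathcal{A}}$ keeps the undeformed middle modules $\mathcal{W}_i=\langle\delta_p\Phi_n : |p|=n-i\rangle$ and the very same differential formulas as for $A$, built from $\Phi_n$ alone; the deformation enters only through the outer tensor factors being $\mathcal{A}$. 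In checking $d_{j-1}d_j=0$, the regrouped terms $\delta_{qp}\Phi_n$ and $\delta_{pq}\Phi_n$ (with $|qp|=n-2$) land in the outer factors, where the deformed relations give $\delta_{qp}\Phi_n=-\delta_{qp}\phi_{n-2}=-c_{qp}$, a scalar in $S$ (this is precisely where $\theta_1=0$ is used: otherwise one gets degree-one elements $-\delta_{qp}\phi_{n-1}$ instead), and scalars slide across $\otimes_S$, so the left-hand and right-hand contributions cancel.

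In your version the differentials contain derivatives of $\phi_{n-2}$, and $\widetilde{d}_{j-1}\widetilde{d}_j$ (for $j\geq 4$) produces cross-terms of the shape $\pm\,\delta_{q'p}\phi_{n-2}\otimes q'\otimes 1\,\mp\,1\otimes q'\otimes\delta_{pq'}\phi_{n-2}$ with $|q'p|=n-4$, i.e.\ \emph{degree-two} elements of $\mathcal{A}$ sitting on opposite sides of the tensor product. Contrary to your claim, these do not cancel by the $n$-superpotential condition on $\phi_{n-2}$: positive-degree elements cannot be moved across $\otimes_S$. They do in fact vanish, but for a reason absent from your sketch: $(n-2)$-coherence combined with the perfect pairing between $\mathcal{W}_2$ and $\mathcal{W}_{n-2}$ (the maps $\eta_j$ of the paper's self-duality construction) forces $\phi_{n-2}\in\mathcal{W}_{n-2}$, whence each $\delta_{q'p}\phi_{n-2}$ is a linear combination of relations of $\mathcal{A}$ and so becomes a scalar there. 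Equivalently, one can prove your complex is isomorphic to the paper's via $\delta_p\Phi_n\mapsto\delta_p\Phi'$; but that, too, requires an argument you omit, namely that coherence for derivatives by paths of length $n-2$ propagates to derivatives of all lengths (this is also needed for your claims that the associated graded of your complex is $\mathcal{W}^{\bullet}$ and that your terms built from $\langle\delta_p\Phi'\rangle$ are finitely generated projective). So as written the crucial step does not go through; either supply these missing facts, or, as the paper does, leave the differentials undeformed so that no cross-terms ever arise.
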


\begin{proof}

By Theorem \ref{Theorem3} such a deformation is given by a superpotential $\Phi'=\Phi_n+\phi_{n-2}=\sum_t c_t t$ and $\mathcal{A}=\mathcal{D}(\Phi',n-2)$. We construct a resolution for $\mathcal{A}$ and show it is self-dual.  We recall the complex $\mathcal{W}^{\bullet}$, defined as (\ref{Complex}) in Section \ref{BSW}, and, by Theorem \ref{BSW1}, that $\mathcal{W}^{\bullet}$ is an $A^e$-module resolution of $A$. We then define a complex $\mathcal{W}_{\mathcal{A}}^{\bullet}$
\begin{equation*}
0 \rightarrow \A \otimes_S \mathcal{W}_n \otimes_S \A \xrightarrow{d_n} \A \otimes_S \mathcal{W}_{n-1} \otimes_S \A \xrightarrow{d_{n-1}} \dots \xrightarrow{d_1} \A \otimes_S \mathcal{W}_0 \otimes_S \A \xrightarrow{\mu} \A \rightarrow 0.
\end{equation*}
The $S^e$-modules $\mathcal{W}_k$, and differential $d$, are defined as in Section \ref{BSW}, i.e. $\mathcal{W}_j=< \{ \delta_p \Phi_n : |p|=n-j \}>$, and  $d_i=\epsilon_i (d_i^l +(-1)^i d_i^r)$ where 
\begin{equation*}
\epsilon_i := \left\{
\begin{array}{c c}  
 (-1)^{i(n-i)}  & \text{if } i<(n+1)/2 \\
   1  &    \text{otherwise} \end{array}
\right. 
\end{equation*}
and  $d_i^l, d_i^r: \A \otimes_S \mathcal{W}_i \otimes_S \A \rightarrow \A \otimes_S \mathcal{W}_{i-1} \otimes_S \A$ are defined by

\begin{eqnarray*}
d_i^l(\alpha \otimes \delta_p \Phi_n \otimes \beta)&= &\sum_{a \in Q_1} \alpha a \otimes \delta_a \delta_p \Phi_n \otimes \beta \\
d_i^r(\alpha \otimes\delta_p \Phi_n \otimes \beta)&=&\sum_{a\in Q_1} \alpha \otimes \delta_p \Phi_n \delta'_a \otimes a \beta
\end{eqnarray*}
for $p$ any path of length $n-i$. We will show this is a self-dual resolution of $\A$.

We first check this is a complex, checking $d_{j-1} \circ d_j=0$ for $j=2, \dots, n$ and that $\mu \circ d_1=0$. In the calculations $|p|=n-j$, and the sums are taken over all $a,b \in Q_1$ and paths $q$ of length $j-2$.
\begin{eqnarray*}
d_{j-1} \circ d_{j}(1 \otimes \delta_p \Phi_n \otimes 1) & = & \epsilon_j \epsilon_{j-1} \sum_{a, b}( ab \otimes \delta_{pab} \Phi_n \otimes 1 - 1 \otimes \delta_p \Phi_n \delta'_{ab} \otimes ab) \\
&=& \epsilon_j \epsilon_{j-1}  \sum_{a, b, q}( c_{pabq} ab \otimes q \otimes 1 - 1 \otimes q \otimes c_{pqab} ab) \\
&=& \epsilon_j \epsilon_{j-1}  \sum_{a, b, q}((-1)^{(n-1)j} c_{qpab} ab \otimes q  \otimes 1 - 1 \otimes q \otimes c_{pqab} ab) \\
&=& \epsilon_j \epsilon_{j-1}  \sum_{q}((-1)^{(n-1)j} \delta_{qp}\Phi_n \otimes q  \otimes 1 - 1 \otimes q \otimes \delta_{pq} \Phi_n) \\
&=& -\epsilon_j \epsilon_{j-1}  \sum_{q}((-1)^{(n-1)j} \delta_{qp}\phi_{n-2} \otimes q  \otimes 1 - 1 \otimes q \otimes \delta_{pq} \phi_{n-2}) \\
&=& -\epsilon_j \epsilon_{j-1}  \sum_{q}((-1)^{(n-1)j} c_{qp} 1 \otimes q  \otimes 1 - 1 \otimes q \otimes 1 c_{pq}) \\
&=& -\epsilon_j \epsilon_{j-1}  \sum_{q}(c_{pq} 1 \otimes q  \otimes 1 - 1 \otimes q \otimes 1 c_{pq}) =0
\end{eqnarray*}
\begin{eqnarray*}
\mu \circ d_1 (1 \otimes \delta_p \Phi_n \otimes 1) &=& \epsilon_1\mu(\sum_a a \otimes \delta_pa \Phi_n \otimes 1 - 1 \otimes \delta_{p} \Phi_n \delta_{a} \otimes a )\\
&=& \epsilon_1 \sum_a (c_{pa} a - (-1)^{n-1} c_{ap} a )= \epsilon_1 \sum_a (c_{pa} a - c_{pa} a) = 0
\end{eqnarray*}

Since $\A$ is a PBW deformation of $A$ we have $gr\A \cong A$ and so $gr \mathcal{W}^{\bullet}_{\A} \cong \mathcal{W}^{\bullet}$, where we use the product filtration, $F^n(\A\otimes_S \mathcal{W}_j \otimes_S \A) := \sum_{i+j+k \le n} F^i(\A) \otimes_S \mathcal{W}_j \otimes_S F^k(\A)$. Since $\mathcal{W}^{\bullet}$ is exact, so is $\mathcal{W}^{\bullet}_{\A}$ ,\cite[Lemma 1]{FMG}, and since $\mathcal{W}^{\bullet}$ consists of finitely generated projectives so does $\mathcal{W}_{\A}^{\bullet}$ ,\cite[Lemmas 6.11, 6.16]{MCR}. Hence $\mathcal{W}^{\bullet}_{\A}$ gives a resolution of $\A$ by finitely generated projectives. Now we need only check that this is still self dual, implying $\mathcal{A}$ is $n$-CY.

We now construct isomorphisms $\alpha_k$ between the complex $\mathcal{W}_{\A}^\bullet$ and its dual

\begin{center}
\begin{tikzpicture}[auto]
\node (P_n) at (0,0) {$\A \otimes_S \mathcal{W}_n \otimes_S \A$};
\node (P_{n-1}) at (3.5,0) {$\A \otimes_S \mathcal{W}_{n-1} \otimes_S \A$};
\node(PMid) at (6,0) {\dots};
\node (P_1) at (8.5,0) {$\A \otimes_S \mathcal{W}_{1} \otimes_S \A$};
\node (P_0) at (12,0) {$\A \otimes_S \mathcal{W}_{0} \otimes_S \A$};

\node (Pv_n) at (0,-3) {$(\A \otimes_S \mathcal{W}_{0} \otimes_S \A)^{\vee}$};
\node (Pv_{n-1}) at (3.5,-3) {$(\A \otimes_S \mathcal{W}_{1} \otimes_S \A)^{\vee}$};
\node(PMidv) at (6,-3) {\dots};
\node (Pv_1) at (8.5,-3) {$(\A \otimes_S \mathcal{W}_{n-1} \otimes_S \A)^{\vee}$};
\node (Pv_0) at (12,-3) {$(\A \otimes_S \mathcal{W}_{n} \otimes_S \A)^{\vee}$};

\draw[->] (P_n) to node {$d_n$} (P_{n-1});
\draw[->] (P_{n-1}) to node {$d_{n-1}$} (PMid);
\draw[->] (PMid) to node {$d_2$} (P_1);
\draw[->] (P_1) to node {$d_1$} (P_0);

\draw[->] (Pv_n) to node {$-d_1^{\vee}$} (Pv_{n-1});
\draw[->] (Pv_{n-1}) to node {$-d_2^{\vee}$} (PMidv);
\draw[->] (PMidv) to node {$-d_{n-1}^{\vee}$} (Pv_1);
\draw[->] (Pv_1) to node {$-d_n^{\vee}$} (Pv_0);

\draw[->] (P_n) to node {$\alpha_n$} (Pv_n);
\draw[->] (P_{n-1}) to node {$\alpha_{n-1}$} (Pv_{n-1});
\draw[->] (P_1) to node {$\alpha_{1}$} (Pv_1);
\draw[->] (P_0) to node {$\alpha_0$} (Pv_0);
\end{tikzpicture}
 \end{center}
such that all the squares commute.

We will use the notation and result of \cite[Section 4]{Bock}, working over $\mathbb{C}$. For $T$ a finite dimensional $S^e$-module let $F_T$ be the $\A^e$-module $\A \otimes_S T \otimes_S \A$, and let $T^*$ denote the $\mathbb{C}$ dual of $T$. 
Then
\begin{eqnarray*}
F_{T^*} & \rightarrow & F_T^{\vee} \\
(1 \otimes \phi \otimes 1) & \mapsto & \left( (1 \otimes p \otimes 1) \mapsto \sum_{e,f \in Q_0} \phi(e p f ) e \otimes_{\mathbb{C}} f  \right)
\end{eqnarray*}
gives an isomorphism of $\A^e$-modules. Moreover, as $S$ is semisimple, tensoring ${\A \otimes_S (-) \otimes_S \A}$ is flat, hence constructing an isomorphism of $S^e$-modules $\mathcal{W}_j \rightarrow \mathcal{W}_{n-j}^*$ will give us an isomorphism of $\A^e$-modules $F_{\mathcal{W}_j} \rightarrow F_{\mathcal{W}_{n-j}^*}$, and composing with the above isomorphism an will give us an isomorphism $F_{\mathcal{W}_j} \rightarrow  F_{\mathcal{W}_{n-j}}^{\vee}$ .

For $|p|=n-j$ define $\partial_p \in \mathcal{W}^*_{n-j}$ by $\partial_p (\delta_q \Phi_n)$ = $c_{qp}$, where $\Phi_n= \sum_{|t|=n}c_t t$ and $|q|=j$. Then there are $S^e$-module homomorphisms
\begin{eqnarray*}
\eta_j: \mathcal{W}_j & \rightarrow & \mathcal{W}_{n-j}^*\\
\delta_p \Phi_n & \mapsto & \gamma_j\partial_p 
\end{eqnarray*}
for $\gamma_j$ arbitrary nonzero constants.\\

Note that $\partial_p (\delta_q \Phi_n) = c_{qp}=(-1)^{(n-1)|q|} c_{pq}=(-1)^{(n-1)|q|} \partial_q (\delta_p \Phi_n)$. To see $\eta_j$ is injective suppose $\eta_j (\sum \lambda_p \delta_p \Phi_n)=0$. Then $\sum_p \lambda_p \partial_p (\delta_q \Phi_n) = \sum_p \lambda_p c_{qp} = 0$ for all $q$. Hence $\sum_p \lambda_p \delta_p \Phi_n = \sum_{p,q} \lambda_p c_{pq} q = \sum_q (\sum_p \lambda_p c_{pq})q = 0$, so the map is injective.

There are $\mathbb{C}$ vector space isomorphisms between $e \mathcal{W}_j f $ and $(f \mathcal{W}_{n-j}e)^*$ for $e,f \in Q_0$ arising from the pairings,
\begin{eqnarray*}
 e \mathcal{W}_j f \otimes_{\mathbb{C}} f \mathcal{W}_{n-j} e & \rightarrow & \mathbb{C} \\
e \delta_p \Phi_n f  \otimes f \delta_q \Phi_n e & \mapsto &  \gamma_j c_{qp}=\eta_j(\delta_p \Phi_n)(\delta_q \Phi_n).
\end{eqnarray*}
As $ e \mathcal{W}_j f$ and $f \mathcal{W}_{n-j} e$ are finite dimensional $\mathbb{C}$ vector spaces the injectivity of $\eta_j$ implies this pairing is perfect, thus $\eta_j$ is an isomorphism.

Now we use this to define isomorphisms $\alpha_j$

\begin{eqnarray*}
\alpha_j: \, \A \otimes_S \mathcal{W}_j \otimes_S \A & \rightarrow & (\A \otimes_S \mathcal{W}_{n-j} \otimes_S \A)^{\vee} \\
(a_1 \otimes \delta_p \Phi_n \otimes a_2 ) &\mapsto& \big( (b_1 \otimes \delta_q \Phi_n \otimes b_2) \mapsto (a_2 b_1 \otimes  \eta_j(\delta_p \Phi_n) \delta_q \Phi_n \otimes b_2 a_1) \big) \\
&= & \big(  (b_1 \otimes \delta_q \Phi_n \otimes b_2) \mapsto ( \gamma_j c_{qp} h(p)  a_2 b_1 t(q) \otimes_{\mathbb{C}} h(q) b_2 a_1t(p) )\big)
\end{eqnarray*}

It remains to check $\alpha_{j-1} \circ d_j = -d^{\vee}_{n-j+1} \circ \alpha_j$, i.e. the following diagram commutes;

\begin{center}
\begin{tikzpicture}[auto]
\node (P_n) at (0,0) {$\A \otimes_S \mathcal{W}_j \otimes_S \A$};
\node (P_{n-1}) at (5,0) {$\A \otimes_S \mathcal{W}_{j-1} \otimes_S \A$};

\node (Pv_n) at (0,-3) {$(\A \otimes_S \mathcal{W}_{n-j} \otimes_S \A)^{\vee}$};
\node (Pv_{n-1}) at (5,-3) {$(\A \otimes_S \mathcal{W}_{n-j+1} \otimes_S \A)^{\vee}$};

\draw[->] (P_n) to node {$d_j$} (P_{n-1});

\draw[->] (Pv_n) to node {$-d_{n-j+1}^{\vee}$} (Pv_{n-1});

\draw[->] (P_n) to node {$\alpha_j$} (Pv_n);
\draw[->] (P_{n-1}) to node {$\alpha_{j-1}$} (Pv_{n-1});

\end{tikzpicture}
 \end{center}

If $|p|=n-j$ and $|q|=j-1$ then
\begin{eqnarray*}
& \hfil & ( \alpha_{j-1} \circ d_j(1 \otimes \delta_p \Phi \otimes 1))(1 \otimes \delta_q \Phi \otimes 1) \\
&=&  \alpha_{j-1}(\epsilon_j \sum_a a \otimes \delta_{pa}\Phi \otimes 1 + (-1)^{j+(n-1)}  1 \otimes \delta_{ap} \Phi \otimes a)(1 \otimes \delta_q \Phi \otimes 1) \\
&=& \gamma_{j-1} \epsilon_{j} \sum_a c_{qpa} t(q) \otimes a +\gamma_{j-1} \epsilon_j (-1)^{j+n-1} \sum_a c_{qap} a \otimes h(q) \end{eqnarray*}
whilst
\begin{eqnarray*}
& \hfil & (-d_{n-j+1}^{\vee} \circ \alpha_j(1 \otimes \delta_p \Phi \otimes 1))(1 \otimes \delta_q \Phi \otimes 1)  \\
&=& \alpha_{j}(1 \otimes \delta_p \Phi \otimes 1)(-\epsilon_{n-j+1} \sum_a a \otimes \delta_{qa} \Phi \otimes 1 +(-1)^{j} 1 \otimes \delta_{aq} \Phi \otimes a) \\
&=& -\gamma_j \epsilon_{n-j+1} \sum_a c_{qap} a \otimes t(p) -\gamma_j\epsilon_{n-j+1} (-1)^{j} \sum_a c_{aqp} h(p) \otimes a
\end{eqnarray*}
where the sums are taken over all $a \in Q_1$. Thus for these to be equal we require
\begin{equation*}
-\gamma_j \epsilon_{n-j+1} c_{qap} = \gamma_{j-1}\epsilon_j (-1)^{j+n-1}c_{qap}
\end{equation*}
and
\begin{equation*}
-\gamma_j \epsilon_{n-j+1} (-1)^j c_{aqp} = \gamma_{j-1} \epsilon_j c_{qpa}=(-1)^{n-1}\gamma_{j-1} \epsilon_j c_{aqp} 
\end{equation*}
for all $a \in Q_1$. These follow if $(-1)^n \gamma_{j-1} \epsilon_j= \gamma_j \epsilon_{n-j+1}(-1)^j$ for $j=1, \dots , n$. As the $\gamma_j$ were arbitrary non-zero scalars, we can choose the $\gamma_j$ so that this is satisfied, and the proof is completed.
\end{proof}

Let $Q$ be a single vertex quiver and $A=\mathcal{D}(\Phi_n,n-2)$ be a Noetherian, Koszul, $n$-CY algebra. We consider a PBW deformation, $\A$, given by $\theta_1,\theta_0$, and set $\phi_{n-1}=  \sum_p p\theta_1(\delta_p \Phi_n)=\sum c_q q$.  If the deformation is a zeroPBW deformation $\phi_{n-1}$ has the $n$-superpotential property. 

\begin{Extra}\label{Extra}
Keeping the above notation and assumptions
\begin{itemize}
\item[1.] Any zeroPBW deformation of $A$ is $n$-CY
\item[2.] Let $n=2$ or $3$. Then the zeroPBW deformations of $A$ are exactly the $n$-CY PBW deformations of $A$, and moreover any superpotential $\Phi'=\Phi_n+\phi_{n-1}+\phi_{n-2}$ is $(n-2)$-coherent.

\end{itemize}
\end{Extra}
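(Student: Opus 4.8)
The plan is to read off both parts from the Wu--Zhu criterion (Theorem \ref{WZ}), which for a single vertex characterises $n$-CY-ness of a PBW deformation by the vanishing of $W(\Phi_n)$, where $W:=\sum_{i=0}^{n-2}(-1)^i\,\mathrm{id}^{\otimes i}\otimes\theta_1\otimes\mathrm{id}^{\otimes(n-2-i)}$. What makes the single-vertex case manageable is that $\mathcal{W}_n=\langle\Phi_n\rangle$ is one dimensional and that $(V\otimes_S R)\cap(R\otimes_S V)=\mathcal{W}_3$ (Example \ref{examplePBW}), so the Wu--Zhu operator is tested on a single generator and the zeroPBW condition $\Im\psi(\theta_1)=0$ is a condition on $\mathcal{W}_3$.

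For part 1, I would evaluate $W$ on $\Phi_n$. Since $\Phi_n$ generates $\mathcal{W}_n=\cap_i V^{\otimes i}\otimes R\otimes V^{\otimes(n-2-i)}$, it lies in $V^{\otimes i}\otimes\mathcal{W}_3\otimes V^{\otimes(n-3-i)}$ for every $0\le i\le n-3$, and applying the zeroPBW identity $\psi(\theta_1)=\mathrm{id}\otimes\theta_1-\theta_1\otimes\mathrm{id}=0$ to the central $\mathcal{W}_3$-factor yields
\begin{equation*}
(\mathrm{id}^{\otimes i}\otimes\theta_1\otimes\mathrm{id}^{\otimes(n-2-i)})(\Phi_n)=(\mathrm{id}^{\otimes(i+1)}\otimes\theta_1\otimes\mathrm{id}^{\otimes(n-3-i)})(\Phi_n),\qquad 0\le i\le n-3.
\end{equation*}
Thus all $n-1$ summands of $W(\Phi_n)$ coincide, say with common value $w$, so that $W(\Phi_n)=\big(\sum_{i=0}^{n-2}(-1)^i\big)\,w$. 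For $n$ odd this coefficient vanishes; for $n$ even it equals $1$, but then $\theta_1=0$ by Remark \ref{Cor1}, so $W(\Phi_n)=0$ regardless, and Theorem \ref{WZ} gives that $\mathcal{A}$ is $n$-CY.

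For part 2(a) I would run this identification backwards. When $n=3$ one has $W(\Phi_3)=-\psi(\theta_1)(\Phi_3)$ and $\mathcal{W}_3=\langle\Phi_3\rangle$ is the entire domain of $\psi(\theta_1)$, so the Wu--Zhu condition $W(\Phi_3)=0$ is exactly the zeroPBW condition $\Im\psi(\theta_1)=0$; condition PBW3) of Theorem \ref{BT1} then forces $\Im\psi(\theta_0)=0$ as well, so by Lemma \ref{zeroPBW} every $3$-CY PBW deformation is zeroPBW, while the converse is part 1. When $n=2$ the telescoping is empty and both properties collapse to $\theta_1=0$ (Wu--Zhu on one side, Remark \ref{Cor1} on the other), giving the stated equality.

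Finally, for coherence I would argue that it is vacuous because the generators of $R$ are independent for these small $n$. For $n=2$ there is one nonzero generator, $\Phi_2$, so $\lambda\Phi_2=0$ forces $\lambda=0$. For $n=3$, self-duality of $\mathcal{W}^\bullet$ (Theorem \ref{BSW1}) gives $\mathcal{W}_2\cong\mathcal{W}_1^{*}$, hence $\dim_{\mathbb{C}}\mathcal{W}_2=\dim_{\mathbb{C}}V=|Q_1|$; since the $|Q_1|$ elements $\delta_a\Phi_3$ already span $\mathcal{W}_2=R$, they form a basis, so any relation $\sum_a\lambda_a\delta_a\Phi_3=0$ forces $\lambda_a=0$ and $(n-2)$-coherence holds for every $\Phi'$. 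The hard part, and the reason the result stops at $n=3$, is precisely this independence: for $n\ge 4$ there are $|Q_1|^{\,n-2}$ derivatives $\delta_p\Phi_n$ but only $\dim\mathcal{W}_2=\dim\mathcal{W}_{n-2}$ worth of room, so genuine relations appear and coherence can no longer be dismissed. A secondary subtlety to watch is the even case of part 1, where the alternating coefficient does not cancel on its own and one must import $\theta_1=0$ from Remark \ref{Cor1} rather than from the telescoping.
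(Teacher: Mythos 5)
Your proposal is correct, and part 2 (both the identification of the Wu--Zhu condition with the zeroPBW condition for $n=2,3$ and the coherence argument via duality/linear independence of the $\delta_a\Phi_3$) is essentially identical to the paper's proof. Part 1, however, takes a genuinely different route. The paper first converts $\theta_1$ into the coefficients of $\phi_{n-1}:=-\sum_{|p|=n-2}p\,\theta_1(\delta_p\Phi_n)$ via the correspondence of Theorem \ref{Theorem3}, expands the Wu--Zhu operator path by path, and shows that $\mathcal{A}$ is $n$-CY if and only if the cyclic alternating sum $\sum_{i=0}^{n-2}(-1)^{in}c_{q_{i+2}\dots q_{n-1}q_1\dots q_{i+1}}$ vanishes for every path $q_1\dots q_{n-1}$; the zeroPBW hypothesis then enters only through the fact that $\phi_{n-1}$ satisfies the $n$ superpotential condition, which kills this sum (terms cancel in pairs for odd $n$, and $\phi_{n-1}=0$ for even $n$). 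Your telescoping argument instead applies the zeroPBW identity $\psi(\theta_1)=0$ directly in each internal position, using the inclusion $\mathcal{W}_n=K_n\subseteq V^{\otimes i}\otimes\mathcal{W}_3\otimes V^{\otimes(n-3-i)}$ --- which is justified here by Theorem \ref{BSW2} together with the fact that $S=\mathbb{C}$ for a one-vertex quiver, so tensoring commutes with intersections of subspaces --- and so it never passes through superpotential coefficients at all. This is cleaner for the odd case, but note what the paper's computation buys: it establishes an if-and-only-if coefficient criterion for $n$-CY-ness of an \emph{arbitrary} PBW deformation, which is exactly what the paper reuses in part 2; you instead re-derive part 2 directly from Theorem \ref{WZ}, which works equally well since for $n=3$ the Wu--Zhu operator is $-\psi(\theta_1)$ acting on $\mathcal{W}_3=\langle\Phi_3\rangle$, the very domain appearing in the zeroPBW condition. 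Your treatment of the even case of part 1 (importing $\theta_1=0$ from Remark \ref{Cor1} rather than getting it from the telescoping) matches the paper's reliance on the same parity fact, so no gap arises there relative to the paper.
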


\begin{proof}
1. Let $\mathcal{A}$ be a PBW deformation of $A$, defined by a map $\theta$. Then define $\Phi'(\theta)=\Phi_n+\phi_{n-1}+\phi_{n-2}$ by 
\begin{equation*}
\phi_{n-2+j}:= -\sum_{|p|=n-2} p \theta_{j}(\delta_p \Phi_n).
\end{equation*}
 Write $\phi_{n-1}=\sum_t c_t t$. We will show that $\mathcal{A}$ is $n$-CY if and only if 
\begin{eqnarray*}
0&=& \sum_{i=0}^{n-2} (-1)^{in} c_{q_{i+2} \dots q_{n-1} q_1 \dots q_{i+1} }  
\end{eqnarray*}
for any $q=q_1 \dots q_{n-1}$, with $q_j \in Q_1$. In particular this shows any zeroPBW deformation is $n$-CY, as then $\Phi'(\theta)$ is a superpotential and when $n$ is even $\phi_{n-1}=\sum c_t t = 0$ and when $n$ is odd the terms cancel in pairs by the superpotential property.

Referring to Theorem \ref{WZ} $\A$ is $n$-CY if and only if 
\begin{eqnarray*}
0&=&\sum^{n-2}_{i=0} (-1)^i id^{\otimes i} \otimes \theta_1 \otimes id^{\otimes(n-2-i)}( \Phi_n) \\
&=&\sum_i (-1)^i \sum_{p=p_1 \dots p_{n-2}} p_1 \dots p_i \theta_1(\delta_{p_1 \dots p_i} \Phi_n\delta'_{p_{i+1} \dots p_{n-2}})p_{i+1} \dots p_{n-2}  \\
&=&\sum_i (-1)^{i+(n-1)(n-2-i)} \sum_p p_1 \dots p_i \theta_1(\delta_{p_{i+1} \dots p_{n-2} p_1 \dots p_i}\Phi_n)p_{i+1} \dots p_{n-2}  \\
&\hfil&\big( \text{using $\theta_1 (\delta_q \Phi_n)=\delta_q \phi_{n-1}$  and writing $\phi_{n-1}= \sum_{|r|=n-1} c_r r$} \big) \\
&=&\sum_i (-1)^{in} \sum_{p,a} p_1 \dots p_i c_{p_{i+1} \dots p_{n-2} p_1 \dots p_i a}a p_{i+1} \dots p_{n-2}  \\
&=&\sum_i (-1)^{in} \sum_{q} c_{q_{i+2} \dots q_{n-1} q_1 \dots q_i q_{i+1}} q_1 \dots q_i  q_{i+1} q_{i+2} \dots q_{n-1}  \\
\end{eqnarray*}

Considering the coefficients of the paths, which are linearly independent, and calculating the coefficient of $q_1 \dots q_{n-1}$ we find the condition on $\phi_{n-1}$ to be

\begin{eqnarray*}
0&=& \sum_{i=0}^{n-2} (-1)^{in} c_{q_{i+2} \dots q_{n-1}  q_1 \dots q_{i+1} }  
\end{eqnarray*}
for all $q_1 \dots q_{n-1}$. 
\\
2. By part 1/Theorem \ref{WZ} we have a condition for $\A$ to be $n$-CY.  In the $n=3$ case the condition gives that $\Im id \otimes \theta_1 - \theta_1 \otimes id=\psi(\theta_1)= \{ 0 \}$ which is the zeroPBW condition. When $n=2$ it gives the condition $\theta_1 = 0$ which is the zeroPBW condition for even $n$.

Moreover when $n=3$ any superpotential, $\Phi'$, is $1$-coherent. In particular the fact that $A$ is 3-CY gives a duality in $\mathcal{W^{\bullet}}$ between the 1st and 2nd terms - the arrows $a \in Q_1$ and the relations $\delta_{a}\Phi_3$. Hence as the arrows are linearly independent so are the relations, and $\sum_{a \in Q_1} \lambda_a \delta_a \Phi_3 = 0 \Rightarrow \lambda_a = 0 \Rightarrow \sum \lambda_a \delta_a \Phi' = 0$, so $\Phi'$ is $1$-coherent. When $n=2$ the superpotential is only differentiated by paths of length 0, so it is clearly $0$-coherent.
\end{proof}

This result in the $3$-CY case is is already known in the context of a general quiver due to the following result of Berger and Taillefer.

\begin{TheoremBT}[{\cite[Section 3]{BT}}] \label{TheoremBT}
Let $A=\mathcal{D}(\Phi_{N+1},1)$ be $N$ Koszul and $3$-CY, with $\Phi_{N+1}$ a superpotential on some quiver $Q$. Then a PBW deformation $\mathcal{A}$ of A is $3$-CY if and only if it is a zeroPBW deformation. Moreover the zeroPBW deformations correspond to $\mathcal{D}(\Phi',1)$ for $\Phi'=\Phi_n+\phi_{n-1}+\dots + \phi_1$ an inhomogeneous superpotential.
\end{TheoremBT}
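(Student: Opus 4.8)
The statement comprises a classification of the zeroPBW deformations as the algebras $\mathcal{D}(\Phi',1)$ together with the equivalence ``$\mathcal{A}$ is $3$-CY $\Leftrightarrow$ $\mathcal{A}$ is a zeroPBW deformation''; throughout $n = N+1$ and $k=1$, so $A$ is $N$-Koszul and $3$-CY. My plan is to read the classification off Theorem \ref{Theorem3}, and to treat both directions of the equivalence through self-duality of a single deformed Koszul resolution of $\mathcal{A}$, following Berger--Taillefer \cite{BT}.

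For the classification I would apply Theorem \ref{Theorem3} with $k=1$, obtaining a bijection between zeroPBW deformations of $A$ and $1$-coherent inhomogeneous superpotentials $\Phi' = \Phi_{N+1} + \phi_N + \dots + \phi_1$, realised as $\mathcal{A} = \mathcal{D}(\Phi',1)$. To remove the coherence hypothesis I would note, exactly as in the proof of Theorem \ref{Extra}, that the $3$-CY property of $A$ makes the self-dual complex identify the arrows with the relations $\delta_a \Phi_{N+1}$; these are therefore linearly independent, so $\sum_a \lambda_a \delta_a \Phi_{N+1} = 0$ forces every $\lambda_a = 0$ and hence $\sum_a \lambda_a \delta_a \Phi' = 0$. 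Thus every inhomogeneous $\Phi'$ with top part $\Phi_{N+1}$ is $1$-coherent, and the zeroPBW deformations are precisely the $\mathcal{D}(\Phi',1)$.

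Because $\mathcal{A}$ is a PBW deformation, the contracted Koszul $N$-complex $\widehat{\mathcal{W}}^{\bullet}$ of Theorem \ref{BSWK} deforms to a finite complex $\widehat{\mathcal{W}}_{\mathcal{A}}^{\bullet}$ over $\mathcal{A}$ whose associated graded is $\widehat{\mathcal{W}}^{\bullet}$; since the latter is exact and consists of finitely generated projectives, the filtered-module arguments of \cite{FMG,MCR} show $\widehat{\mathcal{W}}_{\mathcal{A}}^{\bullet}$ is a finitely generated projective $\mathcal{A}^e$-resolution of $\mathcal{A}$. Both directions of the equivalence then reduce to self-duality of this one resolution. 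For zeroPBW $\Rightarrow$ $3$-CY, the condition $\Im \psi(\theta_{N-1}) = \{0\}$ is precisely what lets the pairings $\mathcal{W}_j \to \mathcal{W}_{n-j}^{*}$ of Theorem \ref{TheoremCY} assemble into an isomorphism of $\widehat{\mathcal{W}}_{\mathcal{A}}^{\bullet}$ with its dual, yielding a length-$3$ self-dual resolution and hence $3$-CY by the remark after Definition \ref{dual2}.

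The converse is where I expect the real work. Here I would use the bimodule CY duality of Lemma \ref{Calabi2}: if $\mathcal{A}$ is $3$-CY then $\widehat{\mathcal{W}}_{\mathcal{A}}^{\bullet}$, being a finite projective resolution, must be self-dual, and this self-duality pins down a symmetry between the top and bottom differentials of the deformed complex. The leading deformation obstruction $\psi(\theta_{N-1})$ measures exactly the failure of the deformed top differential to respect that symmetry, so it is forced to vanish. Making this precise — tracking how the $\theta_j$ enter the two ends of the self-dual complex and showing that no nonzero $\psi(\theta_{N-1})$ can survive — is the main obstacle, and is the substance of \cite[Section 3]{BT}.
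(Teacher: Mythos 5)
The paper never proves this statement: Theorem \ref{TheoremBT} is imported verbatim from Berger--Taillefer \cite[Section 3]{BT} as known background, so there is no in-paper argument to compare yours against, and your proposal must stand on its own. Its first half does. Applying Theorem \ref{Theorem3} with $k=1$ and then removing the coherence hypothesis by the duality argument --- the $3$-CY self-duality pairs the term $\mathcal{W}_1$ with the term $\mathcal{W}_N$ of the contracted complex, so the relations $\delta_a\Phi_{N+1}$ inherit the linear independence of the arrows and every $\Phi'$ with top part $\Phi_{N+1}$ is $1$-coherent --- is exactly the paper's own reasoning in part 2 of Theorem \ref{Extra}, and it transfers from the single-vertex setting there to a general quiver unchanged. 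The forward implication (zeroPBW $\Rightarrow$ $3$-CY) is also credibly within reach of your route, with one caveat you gloss over: in the $N$-Koszul case the deformed contracted differential $d^{N-1}$ on $\widehat{\mathcal{W}}^{\bullet}_{\mathcal{A}}$ is not given by ``the same formulas'' as in the homogeneous case --- it acquires correction terms built from the $\theta_j$ --- so both the claim that one still has a complex and the claim that the pairings of Theorem \ref{TheoremCY} assemble into a self-duality require genuine computation, not just transport of structure from the associated graded.

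The genuine gap is the converse. You write that if $\mathcal{A}$ is $3$-CY then $\widehat{\mathcal{W}}^{\bullet}_{\mathcal{A}}$, ``being a finite projective resolution, must be self-dual.'' That does not follow: the $3$-CY condition gives $\RHom_{\mathcal{A}^e}(\mathcal{A},\mathcal{A}^e)[3]\cong\mathcal{A}$ in the derived category, i.e.\ the dual complex is quasi-isomorphic to a shift of the resolution, whereas Definition \ref{dual2} --- and your subsequent step of reading $\psi(\theta_{N-1})=0$ off a ``symmetry between the top and bottom differentials'' --- requires chain-level isomorphisms $\alpha_i$ commuting with the differentials. Upgrading derived self-duality to chain-level self-duality of this particular complex (via a minimality argument, or via the direct Hochschild-cohomological computations that \cite{BT} actually performs) is the entire content of the hard direction, and your proposal replaces it with an appeal to \cite[Section 3]{BT} --- the very source the theorem is quoted from. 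As a blind proof, the only-if direction is therefore not established.
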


\section{Application: Symplectic reflection algebras}
In this section we recall the definition of symplectic reflection algebras and deduce they are Morita equivalent to CY superpotential algebras by applying Theorems \ref{BSW3}, \ref{Theorem3}, and \ref{TheoremCY}. We go on to calculate some examples, and consider the interpretation of the parameters of a symplectic reflection algebra in the superpotential setting.

\vspace{0.4cm}

Let $V$ be a $2n$ dimensional space, equipped with symplectic form $\omega$ and $G$ a symplectic reflection group which acts faithfully on $V$ preserving the symplectic form. We say $(G,V)$ is indecomposable if there is no $G$-stable splitting $V=V_1 \oplus V_2$ with $\omega(V_1,V_2)=0$. We consider the skew group algebra $\mathbb{C}[V] \rtimes G$ and the symplectic reflection algebras are defined to be the PBW deformations of this relative to $\mathbb{C} G$, and were classified by Etingof and Ginzburg \cite{EG}. 

\begin{EG}( \cite[Theorem 1.3]{EG}) \label{EG}
Any PBW deformation of such an indecomposable $\mathbb{C}[V] \rtimes G$ is of the form 
\begin{equation*}
H_{t,\mathbf{c}}=\frac{T_{\mathbb{C}G} (V^*)}{<[x,y]-\kappa_{t,\mathbf{c}}(x,y) : x,y \in V^*>}
\end{equation*}
where $\kappa_{t,\mathbf{c}}(x,y)=t\omega_{V^*}(x,y)-\Sigma_s \omega_s(x,y) \mathbf{c}(s) s$ with the sum taken over the symplectic reflections $s$, and $\mathbf{c}$ a complex valued class function on symplectic reflections. The symplectic form $\omega_{V^*}$ on $V^*$ is induced from $\omega$ on $V$, and $\omega_s$ is defined as $\omega_{V^*}$ restricted to $(id-s)(V^*)$.
\end{EG}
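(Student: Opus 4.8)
The plan is to recognise $H_{t,\mathbf{c}}$ as a PBW deformation in the Koszul ($N=2$) setting of Section \ref{PBW deformations}, with $S=\mathbb{C}G$, generating space $V^*$, and relation module $R=\wedge^2 V^*\subset V^*\otimes_S V^*$ spanned by the commutators $[x,y]=x\otimes y-y\otimes x$. Since the deformed relations $[x,y]-\kappa(x,y)$ have $\kappa(x,y)\in\mathbb{C}G$, the deformation carries no linear part: in the notation of Section \ref{PBW deformations} we have $\theta_1=0$ and $\theta_0=-\kappa$. First I would observe that requiring $P$ to be an $S^e$-submodule, as the framework demands, forces $\kappa$ to be $G$-equivariant, $g\kappa(x,y)g^{-1}=\kappa({}^{g}x,{}^{g}y)$ (where ${}^{g}x$ denotes the $G$-action); this equivariance is exactly what will make the function $\mathbf{c}$ below a class function. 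Note also that $\theta_1=0$ means the deformation is automatically of zeroPBW type.

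With $\theta_1=0$, Theorem \ref{BT1} collapses: conditions PBW2) and PBW4) are vacuous, PBW1) ($P\cap F^{1}=\{0\}$) holds automatically since the top-degree parts $[x,y]$ of the relations are linearly independent, and the only surviving requirement is PBW3), namely $\Im\,\psi(\theta_0)=\{0\}$ on the overlap $(R\otimes_S V^*)\cap(V^*\otimes_S R)$. Because $\mathbb{C}[V]=\mathrm{Sym}(V^*)$ is Koszul this overlap is $\wedge^3 V^*$, and evaluating $\psi(\theta_0)=\mathrm{id}\otimes\theta_0-\theta_0\otimes\mathrm{id}$ there produces the Jacobi-type obstruction. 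Expanding $\kappa=\sum_g\kappa_g\,g$ with $\kappa_g:\wedge^2 V^*\to\mathbb{C}$ and using the skew-group relation ${}^{g}z\,g=g\,z$ for $z\in V^*$, the vanishing of this obstruction separates, by linear independence of the $g$, into one identity per group element:
\begin{equation*}
\kappa_g(x,y)({}^{g}z-z)+\kappa_g(y,z)({}^{g}x-x)+\kappa_g(z,x)({}^{g}y-y)=0\qquad\text{for all }x,y,z\in V^*.
\end{equation*}
The whole content of the theorem is then to solve this family of identities.

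I would solve them by splitting on the rank of $1-g$ on $V^*$. For $g=1$ the identity is vacuous, so $\kappa_1$ is an arbitrary $G$-invariant antisymmetric form; by indecomposability of $(G,V)$ these are spanned by $\omega$ — were some invariant form not a multiple of $\omega$, diagonalising the associated $\omega$-self-adjoint endomorphism of $V$ would give a $G$-stable $\omega$-orthogonal splitting, contradicting indecomposability — whence $\kappa_1=t\,\omega$. For $g\neq 1$ a support argument (feed in $z\in\Ker(1-g)$ and use that $({}^{g}x-x),({}^{g}y-y)$ sweep out $\Im(1-g)$) shows $\kappa_g$ is supported on $\Im(1-g)$; since $1-g$ is invertible there, applying its inverse reduces the identity to $\kappa_g(x,y)z+\kappa_g(y,z)x+\kappa_g(z,x)y=0$ on $\Im(1-g)$. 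When $g$ is a symplectic reflection, $\dim\Im(1-g)=2$ and this is the universal two-dimensional identity, so it holds for free and forces $\kappa_g=\mathbf{c}(g)\,\omega_g$ with $\omega_g=\omega|_{\Im(1-s)}$ and $\mathbf{c}(g)$ unconstrained; when $\dim\Im(1-g)\geq 4$ one picks $x,y,z$ linearly independent and reads off $\kappa_g=0$. Assembling the pieces yields exactly $\kappa=\kappa_{t,\mathbf{c}}$, with equivariance making $\mathbf{c}$ a class function; conversely each such $\kappa$ satisfies all the reduced identities, so $H_{t,\mathbf{c}}$ is indeed a PBW deformation, giving the stated classification.

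The main obstacle, and the crux of the argument, is this case analysis of the per-element identity: proving that $\kappa_g$ is supported on $\Im(1-g)$, and then extracting the sharp dichotomy whereby $\dim\Im(1-g)=2$ leaves the single free parameter $\mathbf{c}(g)$ while $\dim\Im(1-g)\geq 4$ forces vanishing — precisely the point at which the notion of \emph{symplectic reflection} is singled out. A secondary subtlety, governing the $g=1$ term, is the uniqueness up to scalar of the invariant symplectic form, which is exactly where the indecomposability hypothesis is used.
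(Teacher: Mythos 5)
The paper itself gives no proof of this statement: it is imported verbatim as \cite[Theorem 1.3]{EG}, so the only meaningful comparison is with Etingof--Ginzburg's original argument, and your proposal is essentially a reconstruction of that argument. The skeleton is sound and matches how the result is actually proved: place $H_{t,\mathbf{c}}$ in the $N=2$ framework of Section \ref{PBW deformations} over $S=\mathbb{C}G$ (equivariance of $\kappa$ being forced by the $S^e$-submodule requirement), note $\theta_1=0$ so that Theorem \ref{BT1} collapses to the single condition $\Im\,\psi(\theta_0)=\{0\}$ on the overlap $(V^*\otimes_S R)\cap(R\otimes_S V^*)$, use Koszulity of $\mathbb{C}[V]\rtimes G$ over $\mathbb{C}G$ to identify that overlap with $\wedge^3V^*$ (Koszulity is also what makes the conditions of Theorem \ref{BT1} sufficient rather than merely necessary, so it does double duty here), separate the resulting Jacobi obstruction into one identity per group element, and run the rank dichotomy on $1-g$. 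Your support argument ($\kappa_g$ vanishes when an argument lies in $\Ker(1-g)$), the reduction of the identity to $\Im(1-g)$, the observation that the two-dimensional identity is an alternating trilinear map out of $\wedge^3$ of a two-dimensional space and hence holds for free, and the forced vanishing when $\dim\Im(1-g)\ge 3$ are all correct; it is worth making explicit that $\Ker(1-g)=\Im(1-g)^{\perp_\omega}$, so the rank of $1-g$ is even, which is what makes the dichotomy ``$=2$ versus $\ge 4$'' exhaustive and is exactly the point where symplectic reflections are singled out.

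The one genuine soft spot is the $g=1$ case, which is precisely where indecomposability enters. Your parenthetical argument assumes the $\omega$-self-adjoint equivariant operator $A$ defined by $\kappa_1(x,y)=\omega(Ax,y)$ can be diagonalised. Distinct (generalised) eigenvalues do give an $\omega$-orthogonal $G$-stable splitting, but nothing forces $A$ to be semisimple, and a nonzero nilpotent self-adjoint equivariant part is not a priori excluded: nilpotent $\omega$-self-adjoint operators do exist on symplectic spaces (for instance $Nx=\omega(e_1,x)e_2-\omega(e_2,x)e_1$ with $e_1,e_2$ spanning an isotropic plane), so one must still show they are incompatible with indecomposability. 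This can be repaired: replacing $N$ by its last nonzero power one may assume $N^2=0$; then $(\Im N)^{\perp_\omega}=\Ker N$, so if $\Ker N\supsetneq\Im N$ a $G$-stable complement of $\Im N$ inside $\Ker N$ is a symplectic $G$-subspace and splits $V$ orthogonally, while if $\Ker N=\Im N=L$ is Lagrangian one takes a $G$-stable complement $M$ of $L$ and checks that for generic $c\in\mathbb{C}$ the graph $\{cNm+m : m\in M\}$ is a $G$-stable symplectic subspace, again splitting $V$ orthogonally. Without an argument of this kind the identification $\kappa_1=t\,\omega_{V^*}$ --- and with it the precise shape of $\kappa_{t,\mathbf{c}}$ --- is asserted rather than proved; everything else in your write-up goes through.
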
 In particular they fall into the even dimensional case considered in Section \ref{Main Results} above, with the PBW parameter $\theta_1$ equal to zero, and they are PBW deformations relative to $\mathbb{C}G$, which under Morita equivalence with a quiver of relations correspond to PBW deformations relative to $S$.

Two infinite classes of symplectic reflection algebras are the rational Cherednik and wreath product algebras.
\begin{itemize}
\item \textbf{Rational Cherednik Algebras.} Let $\mathfrak{h}$ be a finite dimensional vector space and $G$ a finite subgroup of $\GL(\mathfrak{h})$. Then $V=\mathfrak{h}\oplus \mathfrak{h}^{*}$ has the natural symplectic form $\omega((x,f),(y,g))=f(y)-g(x)$, and an action of $G$ as a symplectic reflection group. Then the symplectic reflection algebras given by PBW deformations of $\mathbb{C}[V] \rtimes G$ are the Rational Cherednik Algebras.
\item \textbf{Wreath Product Algebras.} Let $K$ be a finite subgroup of $\SL_2(\mathbb{C})$, and $S_n$ the symmetric group of order $n$. Then the wreath product group $G=S_n \wr K$ is a symplectic reflection group acting on $V=(\mathbb{C}^2)^n$.
\end{itemize}

\subsection{Symplectic reflection algebras as superpotential algebras}
Here we show that symplectic reflection algebras are Morita equivalent to superpotential algebras.

\begin{Sym} \label{Sym}
Let $H_{t,\mathbf{c}}$ be as in Theorem \ref{EG}. 
\begin{itemize}
\item[1.] $H_{0,0}$ is Morita equivalent to $A=\mathcal{D}(\Phi_{2n},2n-2)$  for some homogeneous superpotential $\Phi_{2n}$, and is 2n-CY and Koszul.
\item[2.] Any $H_{t,\mathbf{c}}$ is Morita equivalent to $\A=\mathcal{D}(\Phi',2n-2)$ for $\Phi'=\Phi_{2n} + \phi_{2n-2}$ an inhomogeneous superpotential which is $(2n-2)$-coherent.
\item[3.] Any $(2n-2)$-coherent superpotential of the form $\Phi'=\Phi_{2n}+\phi_{2n-2}$, gives an algebra $\A=D(\Phi',2n-2)$ that is Morita equivalent to a symplectic reflection algebra $H_{t,\mathbf{c}}$.
\item[4.] All $H_{t,\mathbf{c}}$ are $2n$-CY algebras.
\end{itemize}
\end{Sym}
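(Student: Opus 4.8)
The plan is to chain together Theorems \ref{BSW3}, \ref{EG}, \ref{Theorem3}, and \ref{TheoremCY}, transporting every statement across the Morita equivalence between $\mathbb{C}[V]\rtimes G$ and its associated quiver algebra. For part 1, the key observation is that a symplectic reflection group preserves the form $\omega$, so $G \subset \Sp(V) \subset \SL(V)$; Theorem \ref{BSW3} then applies verbatim, presenting $H_{0,0}=\mathbb{C}[V]\rtimes G$ as Morita equivalent to $\mathcal{D}(\Phi_{2n},2n-2)$ for a homogeneous degree-$2n$ superpotential on the McKay quiver, and the $G\le\SL(V)$ hypothesis of that theorem delivers the $2n$-CY and Koszul conclusions.

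For part 2 I would first note that by Theorem \ref{EG} each $H_{t,\mathbf{c}}$ is a PBW deformation of $H_{0,0}$ whose deformation datum $\kappa_{t,\mathbf{c}}$ takes values in $\mathbb{C}G$, that is, in the degree-$0$ part of the graded algebra. Transporting this PBW deformation through the Morita equivalence (which is implemented by a degree-$0$ idempotent, hence respects the grading and filtration) yields a PBW deformation $\mathcal{A}$ of $A=\mathcal{D}(\Phi_{2n},2n-2)$ relative to $S$ whose deforming map $\theta=\theta_1+\theta_0$ has $\theta_1=0$, since $\kappa$ contributes only in degree $0$. Here $N=2n-(2n-2)=2$, so the zeroPBW condition of Definition \ref{superPBW} is precisely $\Im\,\psi(\theta_1)=\{0\}$, which now holds automatically. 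Thus $\mathcal{A}$ is a zeroPBW deformation, and Theorem \ref{Theorem3} identifies it with $\mathcal{D}(\Phi',2n-2)$ for a $(2n-2)$-coherent inhomogeneous superpotential $\Phi'=\Phi_{2n}+\phi_{2n-1}+\phi_{2n-2}$; since $2n$ is even, Remark \ref{Cor1} forces $\phi_{2n-1}=0$ (it corresponds to $\theta_1=0$), giving $\Phi'=\Phi_{2n}+\phi_{2n-2}$ as claimed.

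For part 3 I would run the bijection of Theorem \ref{Theorem3} in reverse: a $(2n-2)$-coherent $\Phi'=\Phi_{2n}+\phi_{2n-2}$ defines a zeroPBW, hence PBW, deformation of $A$, which transports across the Morita equivalence to a PBW deformation of $H_{0,0}=\mathbb{C}[V]\rtimes G$; by the Etingof--Ginzburg classification (Theorem \ref{EG}) every such deformation is some $H_{t,\mathbf{c}}$, so $\mathcal{A}=\mathcal{D}(\Phi',2n-2)$ is Morita equivalent to that $H_{t,\mathbf{c}}$. For part 4, part 2 exhibits $H_{t,\mathbf{c}}$ as Morita equivalent to a zeroPBW deformation $\mathcal{A}$ with $\theta_1=0$, so Theorem \ref{TheoremCY} applies directly to give that $\mathcal{A}$ is $2n$-CY; since the bimodule Calabi--Yau property is a Morita invariant, $H_{t,\mathbf{c}}$ is $2n$-CY.

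The step I expect to carry the real weight is the careful bookkeeping of the Morita equivalence itself: verifying that it is compatible with the grading and filtration so that PBW deformations correspond to PBW deformations, that the degree-$0$ nature of $\kappa_{t,\mathbf{c}}$ genuinely forces $\theta_1=0$ on the quiver side (which is what makes the deformation zeroPBW rather than merely PBW, and cannot be deduced from parity alone), and that the Calabi--Yau property survives transport. Once these compatibilities are in hand, the remaining ingredients---the triviality of the zeroPBW condition when $\theta_1=0$, and the invocations of Theorems \ref{Theorem3}, \ref{TheoremCY}, and \ref{EG}---are essentially formal.
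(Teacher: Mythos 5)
Your proposal is correct and follows essentially the same route as the paper: Theorem \ref{BSW3} for part 1 (the paper leaves the observation $G\subset \Sp(V)\subset\SL(V)$ implicit, which you make explicit), the degree-$0$ nature of $\kappa_{t,\mathbf{c}}$ from Theorem \ref{EG} forcing $\theta_1=0$ and hence the zeroPBW condition, Theorem \ref{Theorem3} run in both directions for parts 2 and 3, and Theorem \ref{TheoremCY} plus Morita invariance of the bimodule CY property for part 4. The paper likewise asserts without further detail the compatibility of the Morita equivalence with gradings, filtrations, and Koszul resolutions, so your flagged ``bookkeeping'' step is treated at the same level of rigour in both arguments.
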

\begin{proof} 

1: $H_{0,0}$ is just $\mathbb{C}[V] \rtimes G$. Hence Theorem \ref{BSW3} applies, and $H_{0,0}$ is Morita equivalent to a $2n$-CY, Koszul algebra $A:=\mathcal{D}(\Phi_{2n},2n-2)$ for the McKay quiver for $(G,V)$.

2 and 3:  The Morita equivalence between $\mathbb{C}[V] \rtimes G$ and $A$ switches $\mathbb{C}G$ with $S$, respects the gradings, and respects the Koszul resolutions. Hence any zeroPBW deformation of $\mathbb{C}[V] \rtimes G$ corresponds to a zeroPBW deformation of $A$, and 2 and 3 follow from Theorem \ref{Theorem3} once we note by Theorem \ref{EG} all PBW deformations of $\mathbb{C}[V]\rtimes G$  are zeroPBW.

4: From 2 and 3 we know any symplectic reflection algebra is Morita equivalent to some $\A:=\mathcal{D}(\Phi_{2n} + \phi_{2n-2},2n-2)$. By Theorem \ref{TheoremCY} $\A$ is $2n$-CY: $\A$ has a finite length resolution by finitely generated projective modules and $\RHom(\A,\A^e)[n] \cong \A$. Then the Morita equivalent symplectic reflection algebra, $H_{t,\mathbf{c}}$, also has a finite length resolution by finitely generated projective modules, as these properties are preserved under Morita equivalence. The isomorphism in the derived category $\RHom(\A,\A^e)[n] \cong \A$ transfers to the isomorphism $\RHom(H_{t,\mathbf{c}},H_{t,\mathbf{c}}^e)[n] \cong H_{t,\mathbf{c}}$ under the Morita equivalence, hence $H_{t,\mathbf{c}}$ is $2n$-CY.
\end{proof} 

\subsubsection{Examples} We give three examples.
\begin{example4} \label{example4}
Here we consider the symplectic reflection algebra corresponding to the group $S_3$ acting on $\mathfrak{h} \oplus \mathfrak{h}^*$ in the manner of a rational Cherednik algebra, where the representation $\mathfrak{h}$ is given by
\begin{equation*}
S_3=\left\langle  g=\left( \begin{array}{c c}\varepsilon_3 &0\\ 0&\varepsilon_3^2 \end{array} \right), \, h=\left( \begin{array}{c c} 0&1\\ 1&0 \end{array} \right) \right\rangle
\end{equation*}
with $\varepsilon_3$ a primitive third root of unity.

The McKay quiver is;
\begin{center}
$
\begin{tikzpicture} [bend angle=45, looseness=1]
\node (C1) at (0,0)  {$0$};
\node (C2) at (2,2)  {$2$};
\node (C2a) at (2.1,2.1) {};
\node (C2b) at (1.9,2.1) {};
\node (C3) at (4, 0){$1$};
\draw [->,bend left] (C1) to node[gap]  {\emph{a}} (C2);
\draw [->,bend left=20,looseness=1] (C1) to node[gap]{\textbf{a}} (C2);
\draw [->,bend left] (C2) to node[gap]{\emph{A}} (C1);
\draw [->,bend left=20,looseness=1] (C2) to node[gap,below=-5pt]{\textbf{A}} (C1);
\draw [->,bend left] (C3) to node[gap]  {\emph{b}} (C2);
\draw [->,bend left=20,looseness=1] (C3) to node[gap]{\textbf{b}} (C2);
\draw [->,bend left] (C2) to node[gap]{\emph{B}} (C3);
\draw [->,bend left=20,looseness=1] (C2) to node[gap,below=-5pt]{\textbf{B}} (C3);
\draw[->]  (C2b) edge [in=170,out=100,loop,looseness=12] node[above] {\emph{L}} (C2b);
\draw[->]  (C2a) edge [in=80,out=10,loop,looseness=12] node[above] {\textbf{L}} (C2a);
\end{tikzpicture}
$
\end{center}

Now following the calculation in BSW \cite[Theorem 3.2]{BSW} we can calculate a superpotential to accompany this quiver, $\Phi_4$
\begin{eqnarray*}
\Phi_4=
 &-\text{\emph{A}\textbf{a}\emph{A}\textbf{a}}  
+2\text{\textbf{A}\emph{a}\emph{A}\textbf{a}}  
+4\text{\emph{A}\textbf{L}\emph{L}\textbf{a}}  
-4\text{\textbf{A}\emph{L}\emph{L}\textbf{a}}
-4\text{\emph{A}\emph{b}\textbf{B}\textbf{a}} 
+2\text{\emph{A}\textbf{b}\emph{B}\textbf{a}}
+2\text{\textbf{A}\emph{b}\emph{B}\textbf{a}} 
\\
 &-\text{\textbf{A}\emph{a}\textbf{A}\emph{a}} 
-4\text{\emph{A}\textbf{L}\textbf{L}\emph{a}} 
+4\text{\textbf{A}\emph{L}\textbf{L}\emph{a}} 
+2\text{\emph{A}\textbf{b}\textbf{B}\emph{a}} 
+2\text{\textbf{A}\emph{b}\textbf{B}\emph{a}} 
-4\text{\textbf{A}\textbf{b}\emph{B}\emph{a}} 
-8\text{\emph{L}\emph{L}\textbf{L}\textbf{L}} 
\\
& +4\text{\emph{b}\emph{B}\textbf{L}\textbf{L}} 
-4\text{\emph{b}\textbf{B}\emph{L}\textbf{L}}
+4\text{\emph{L}\textbf{b}\emph{B}\textbf{L}}
+4\text{\textbf{b}\textbf{B}\emph{L}\emph{L}} 
-2\text{\emph{b}\emph{B}\textbf{b}\textbf{B}}
+\text{\emph{b}\textbf{B}\emph{b}\textbf{B}}
+\text{\textbf{b}\emph{B}\textbf{b}\emph{B}} 
\\
& +\text{cyclic permutations}
\end{eqnarray*}

We now wish to look at the zeroPBW deformations, which by Theorem \ref{Sym} correspond to $2$-coherent superpotentials $\Phi'=\Phi_{4}+\phi_2$. Writing $\phi_2= \sum c_{xy} xy$ the PBW deformations are parametrised by the $c_{xy}$ such that $\Phi'$ is a $2$-coherent superpotential. We see that $\Phi'$ is a superpotential if $c_{xy}=-c_{yx}$ for all arrows $x,y$. A superpotential $\Phi'$ is $2$-coherent if the $\delta_{xy}\Phi'= \delta_{xy}\Phi_4+c_{xy}e_{h(x)}e_{t(y)}$  satisfy the same linear relations as the $\delta_{xy} \Phi_4$.  For instance as $\delta_{\emph{Aa}}\Phi=0$ and  $\delta_{\emph{A}\textbf{a}}\Phi = - \delta_{\textbf{A}\emph{a}}$ we require that $c_{\emph{Aa}}=0$ and $c_{\emph{A}\textbf{a}}=-c_{\textbf{A}\emph{a}}$.

Making these calculations in this example we see the only non-zero $c$, and dependency relations among them, are:
\begin{align*}
c_{\textbf{a}\emph{A}}=-c_{\emph{A}\textbf{a}}=c_{\textbf{A}\emph{a}}=-c_{\emph{a}\textbf{A}} \\
c_{\textbf{b}\emph{B}}=-c_{\emph{B}\textbf{b}}=c_{\textbf{B}\emph{b}}=-c_{\emph{b}\textbf{B}} \\
c_{\textbf{a}\emph{A}}+c_{\textbf{b}\emph{B}}=c_{\textbf{L}\emph{L}}=-c_{\emph{L}\textbf{L}}
\end{align*}
Hence we have a 2-coherent superpotential for any
\begin{equation*}
\phi_2=c_{\textbf{a}\emph{A}}(\textbf{a}\emph{A}-\emph{A}\textbf{a}+\textbf{A}\emph{a}-\emph{a}\textbf{A})+c_{\textbf{b}\emph{B}}(\textbf{b}\emph{B}-\emph{B}\textbf{b}+\textbf{B}\emph{b}-\emph{b}\textbf{B})+(c_{\textbf{a}\emph{A}}+c_{\textbf{b}\emph{B}})(\textbf{L}\emph{L}-\emph{L}\textbf{L})
\end{equation*}

So we have 2 degrees of freedom in our parameters, exactly as we do for the $t,\mathbf{c}$ in $H_{t,\mathbf{c}}$ for $S_3$ acting on $\mathfrak{h} \oplus\mathfrak{h}^*$. \\
\end{example4}

\begin{example5} \label{example5}
Here we consider the symplectic reflection algebra corresponding to the dihedral group of order 8, $D_8$, acting on $\mathfrak{h} \oplus \mathfrak{h}^*$ in the manner of a rational Cherednik algebra. The representation $\mathfrak{h}$ is given as
\begin{equation*}
D_8=\left\langle\sigma=\left(\begin{array}{c c}\varepsilon_4 &0\\ 0&\varepsilon_4^3 \end{array} \right), \, \tau=\left(\begin{array}{c c}0&1\\ 1&0 \end{array} \right) \right\rangle
\end{equation*}
where $\varepsilon_4$ is a primitive fourth root of unity.

This has McKay quiver
\begin{center}
$
\begin{tikzpicture} [bend angle=45, looseness=1]
\node (C1) at (0,0)  {$0$};
\node (C2) at (2,2)  {$4$};
\node (C3) at (4, 0){$1$};
\node (C4) at (0,4) {$2$};
\node (C5) at (4,4) {$3$};
\draw [->,bend left] (C1) to node[gap]  {\emph{a}} (C2);
\draw [->,bend left=20,looseness=1] (C1) to node[gap] {\textbf{a}} (C2);
\draw [->,bend left] (C2) to node[gap] {\emph{A}} (C1);
\draw [->,bend left=20,looseness=1] (C2) to node[gap] {\textbf{A}} (C1);
\draw [->,bend left] (C3) to node[gap]  {\emph{d}} (C2);
\draw [->,bend left=20,looseness=1] (C3) to node[gap] {\textbf{d}} (C2);
\draw [->,bend left] (C2) to node[gap] {\emph{D}} (C3);
\draw [->,bend left=20,looseness=1] (C2) to node[gap] {\textbf{D}} (C3);
\draw [->,bend left] (C4) to node[gap]  {\emph{b}} (C2);
\draw [->,bend left=20,looseness=1] (C4) to node[gap] {\textbf{b}} (C2);
\draw [->,bend left] (C2) to node[gap] {\emph{B}} (C4);
\draw [->,bend left=20,looseness=1] (C2) to node[gap] {\textbf{B}} (C4);
\draw [->,bend left] (C5) to node[gap]  {\emph{c}} (C2);
\draw [->,bend left=20,looseness=1] (C5) to node[gap] {\textbf{c}} (C2);
\draw [->,bend left] (C2) to node[gap] {\emph{C}} (C5);
\draw [->,bend left=20,looseness=1] (C2) to node[gap] {\textbf{C}} (C5);
\end{tikzpicture}
$
\end{center}

By choosing a $G$-equivariant basis we calculate the superpotential
\begin{eqnarray*}
\Phi=
 &-\text{\emph{A}\textbf{a}\emph{A}\textbf{a}}  
+2\text{\textbf{A}\emph{a}\emph{A}\textbf{a}}  
-4\text{\emph{A}\emph{d}\textbf{D}\textbf{a}}  
+2\text{\emph{A}\textbf{d}\emph{D}\textbf{a}} 
+2\text{\textbf{A}\emph{d}\emph{D}\textbf{a}}
+2\text{\emph{A}\textbf{b}\emph{B}\textbf{a}}
-2\text{\textbf{A}\emph{b}\emph{B}\textbf{a}}
+2\text{\emph{A}\textbf{c}\emph{C}\textbf{a}}
-2\text{\textbf{A}\emph{c}\emph{C}\textbf{a}}
\\
 &-\text{\textbf{A}\emph{a}\textbf{A}\emph{a}}  
-4\text{\textbf{A}\textbf{d}\emph{D}\emph{a}}  
+2\text{\textbf{A}\emph{d}\textbf{D}\emph{a}} 
+2\text{\emph{A}\textbf{d}\textbf{D}\emph{a}}
+2\text{\textbf{A}\emph{b}\textbf{B}\emph{a}}
-2\text{\emph{A}\textbf{b}\textbf{B}\emph{a}}
+2\text{\textbf{A}\emph{c}\textbf{C}\emph{a}}
-2\text{\emph{A}\textbf{c}\textbf{C}\emph{a}} 
-\text{\emph{D}\textbf{d}\emph{D}\textbf{d}}  
\\
 &
+2\text{\textbf{D}\emph{d}\emph{D}\textbf{d}}  
+2\text{\emph{D}\textbf{b}\emph{B}\textbf{d}}
-2\text{\textbf{D}\emph{b}\emph{B}\textbf{d}}
+2\text{\emph{D}\textbf{c}\emph{C}\textbf{d}}
-2\text{\textbf{D}\emph{c}\emph{C}\textbf{d}}
-\text{\textbf{D}\emph{d}\textbf{D}\emph{d}}  
+2\text{\textbf{D}\emph{b}\textbf{B}\emph{d}}
-2\text{\emph{D}\textbf{b}\textbf{B}\emph{d}}
+2\text{\textbf{D}\emph{c}\textbf{C}\emph{d}}
\\
 &
-2\text{\emph{D}\textbf{c}\textbf{C}\emph{d}}
-\text{\emph{B}\textbf{b}\emph{B}\textbf{b}}  
+2\text{\textbf{B}\emph{b}\emph{B}\textbf{b}}
-4\text{\emph{B}\emph{c}\textbf{C}\textbf{b}}
+2\text{\emph{B}\textbf{c}\emph{C}\textbf{b}}
+2\text{\textbf{B}\emph{c}\emph{C}\textbf{b}}
-4\text{\textbf{B}\textbf{c}\emph{C}\emph{b}}
-\text{\textbf{B}\emph{b}\textbf{B}\emph{b}}  
+2\text{\textbf{B}\emph{c}\textbf{C}\emph{b}}
\\
 &
+2\text{\emph{B}\textbf{c}\textbf{C}\emph{b}}
-\text{\emph{C}\textbf{c}\emph{C}\textbf{c}}  
+2\text{\textbf{C}\emph{c}\emph{C}\textbf{c}}
-\text{\textbf{C}\emph{c}\textbf{C}\emph{c}} 
\\
& + \text{cyclic permutations}
\end{eqnarray*}

We now calculate the zeroPBW deformations of $A:=\mathcal{D}(\Phi_4,2)$. We write $\phi_2=\sum c_{xy} xy$, and by Theorem \ref{Sym} the zeroPBW deformations of $A$  are parameterised by the $c_{xy}$ such that $\Phi'=\Phi_4+\phi_2$ is a 2-coherent superpotential.

In particular we require $c_{xy}=-c_{yx}$ and for the $c_{xy}$ to satisfy the same linear relations as the $\delta_{xy} \Phi_4$. Making these calculations in this example we see the only non-zero $c_{xy}$, and dependency relations among them, are;
\begin{eqnarray*}
c_{\textbf{a}\emph{A}}=-c_{\emph{A}\textbf{a}}=c_{\textbf{A}\emph{a}}=-c_{\emph{a}\textbf{A}} \\
c_{\textbf{b}\emph{B}}=-c_{\emph{B}\textbf{b}}=c_{\textbf{B}\emph{b}}=-c_{\emph{b}\textbf{B}} \\
c_{\textbf{c}\emph{C}}=-c_{\emph{C}\textbf{c}}=c_{\textbf{C}\emph{c}}=-c_{\emph{c}\textbf{C}} \\
c_{\textbf{d}\emph{D}}=-c_{\emph{D}\textbf{d}}=c_{\textbf{D}\emph{d}}=-c_{\emph{d}\textbf{D}} \\
c_{\textbf{a}\emph{A}}+c_{\textbf{d}\emph{D}}=-c_{\emph{b}\textbf{B}}-c_{\emph{c}\textbf{C}}=c_{\textbf{b}\emph{B}}+c_{\textbf{c}\emph{C}}
\end{eqnarray*}
Hence to obtain a 2-coherent superpotential we require a $\phi_2$ of the form
\begin{eqnarray*}
\phi_2=&c_{\textbf{a}\emph{A}}(\textbf{a}\emph{A}-\emph{A}\textbf{a}+\textbf{A}\emph{a}-\emph{a}\textbf{A})+c_{\textbf{b}\emph{B}}(\textbf{b}\emph{B}-\emph{B}\textbf{b}+\textbf{B}\emph{b}-\emph{b}\textbf{B})+c_{\textbf{c}\emph{C}}(\textbf{c}\emph{C}-\emph{C}\textbf{c}+\textbf{C}\emph{c}-\emph{c}\textbf{C})\\
&+(c_{\textbf{b}\emph{B}}+c_{\textbf{c}\emph{C}}-c_{\textbf{a}\emph{A}})(\textbf{d}\emph{D}-\emph{D}\textbf{d}+\textbf{D}\emph{d}-\emph{d}\textbf{D})
\end{eqnarray*}

So we see here there are 3 degrees of freedom in our parameters exactly as for the parameters $t$ and $\mathbf{c}$ in the symplectic reflection algebra for $D_8$ acting on $\mathfrak{h} \oplus \mathfrak{h}^*$.\\
\end{example5}

\begin{examplepp}
A special case of path algebras Morita equivalent to symplectic reflection algebras are the deformed preprojective algebras of \cite{CBH}. These can be given as superpotential algebras in the $n=2$, differentiation by paths of length 0, case.\\

We consider a skew group algebras $\mathbb{C}[\mathbb{C}^2] \rtimes G$ for $G$ is a finite subgroup of $\SL_2(\mathbb{C})$. We construct the McKay quiver and label the arrows in a particular way; between any two vertices we choose a direction, label the arrows in this direction $a_1,..., a_k$, and the arrows in the opposite direction $a_1^*, \dots a_k^*$. Then $\mathbb{C}[\mathbb{C}^2] \rtimes G$ is Morita equivalent to $A=\mathcal{D}(\Phi_2,0)$  for the  homogeneous superpotential, $\Phi_2=\sum [a,a^*]$. This is the preprojective algebra
\begin{equation*}
A=\mathcal{D}(\Phi_2,0)=\frac{\mathbb{C}Q}{\sum [a,a^{*}]}
\end{equation*} 

 Now we consider the PBW deformations of $A$. We apply Theorem \ref{Sym}, and deduce PBW deformations correspond to $0$-coherent inhomogeneous superpotentials, $\Phi_2+\phi_0$. We note that $\phi_0:=-\sum_{i \in Q_0} \lambda_i e_i \in S$ can in fact be arbitrary as any element of $S$ satisfies the superpotential property, and the $0$-coherent property is always satisfied. Hence we recover that PBW deformations of the preprojective algebra are the deformed preprojective algebras 
\begin{equation*}
\A=\mathcal{D}(\Phi_2+\phi_0,0)=\frac{\mathbb{C}Q}{\sum [a,a^{*}] -\sum \lambda_i e_i},
\end{equation*} which are parameterised by a scalar, $\lambda_i$, for each vertex, $i$. By Theorem \ref{Sym} these are 2-CY.

\end{examplepp}

\section{Application: PBW deformations of skew group rings for $\GL_2$} \label{General}
So far we have been considering subgroups of $\SL(W)$ where $W$ is a finite dimensional vector space. This corresponds to non-twisted superpotentials. Here we consider algebras Morita equivalent to $\mathbb{C}[W] \rtimes G$ for $G$ a finite subgroup of $\GL(W)$, and  the existence of PBW deformations for $\mathbb{C}[W] \rtimes G$. 

We recall 
\begin{Gen}[{\cite[3.2, 6.1 and 6.8]{BSW}}]
Let $W=\mathbb{C}^n$, and $G$ be a finite subgroup of $\GL(W)$. Then $\mathbb{C}[W] \rtimes G$ is Morita equivalent to $\mathcal{D}(\Phi_n,n-2)$ for the McKay quiver $(G,W)$, $\Phi_n$ a twisted homogeneous superpotential, and the twist automorphism given by $(-)\otimes_{\mathbb{C}} \det W$. There is a recipe to construct the twisted superpotential.
\end{Gen}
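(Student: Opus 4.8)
The plan is to reduce to the untwisted statement of Theorem \ref{BSW3} by tracking exactly where the hypothesis $G\le\SL(W)$ was used, and to show that dropping it replaces ordinary self-duality of the bimodule resolution by self-duality twisted by the graded automorphism induced by $\det W$. First I would establish the Morita equivalence independently of the determinant: the Wedderburn decomposition $\mathbb{C}G=\prod_i \mathrm{End}(V_i)$ supplies a complete set of orthogonal primitive idempotents, and the functor $e(-)$, for $e$ the sum of one idempotent per irreducible, identifies $\mathbb{C}[W]\rtimes G$ with a path algebra with relations on the McKay quiver $(G,W)$. Since $\mathbb{C}[W]=\mathrm{Sym}(W^*)$ is Koszul and $|G|$ is invertible in $\mathbb{C}$, the smash product is Koszul, and as Koszulity is Morita invariant the quiver algebra is $2$-Koszul; none of this uses $\det W$.

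The key step is the duality computation. The Koszul bimodule resolution of $\mathbb{C}[W]$ has the $G$-equivariant shape
\begin{equation*}
0 \to \mathbb{C}[W]\otimes \Lambda^n W \otimes \mathbb{C}[W] \to \cdots \to \mathbb{C}[W]\otimes \Lambda^0 W \otimes \mathbb{C}[W] \to \mathbb{C}[W]\to 0,
\end{equation*}
and applying $(-)\rtimes G$ produces a length-$n$ projective resolution of $A=\mathbb{C}[W]\rtimes G$ by finitely generated $A^e$-modules. Dualising to compute $\RHom_{A^e}(A,A^e)$, the two outer terms are controlled by the top exterior power $\Lambda^n W\cong \det W$. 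When $\det W$ is the trivial $G$-representation one recovers honest self-duality and the $n$-CY property of Theorem \ref{BSW3}; in general the identification of the dual complex with the original acquires a twist by $\sigma=(-)\otimes_{\mathbb{C}}\det W$, so that $\RHom_{A^e}(A,A^e)[n]\cong {}_{\sigma}A$ and $A$ is twisted $n$-CY.

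I would then invoke the twisted analogue of Theorem \ref{BSWK} (the $\sigma$-twisted form of \cite[Theorem 6.8]{BSW}): a $2$-Koszul algebra whose bimodule resolution is self-dual up to a twist $\sigma$ is precisely $\mathcal{D}(\Phi_n,n-2)$ for a $\sigma$-twisted homogeneous superpotential $\Phi_n$, with the resolution equal to the twisted Koszul complex. This gives the existence of $\Phi_n$ and the twisted superpotential condition with twist $(-)\otimes_{\mathbb{C}}\det W$. The explicit recipe for $\Phi_n$ is then read off from the self-duality isomorphism exactly as in the untwisted recipe of \cite[Theorem 3.2]{BSW}, pairing the generator of $\Lambda^n W$ against the arrows of the McKay quiver under the Morita equivalence to extract the coefficients $c_{a_1\cdots a_n}$, but with the twist inserted at the top degree. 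The main obstacle is this middle step: setting up the twisted self-duality correctly and verifying that the induced automorphism is \emph{precisely} $(-)\otimes_{\mathbb{C}}\det W$ rather than some other character of $G$. This demands careful sign and $G$-equivariance bookkeeping at the top of the Koszul complex, and it is exactly the location where the hypothesis $G\le\SL(W)$ was silently used in the untwisted case.
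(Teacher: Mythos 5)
The paper gives no proof of this statement at all: it is recalled verbatim from \cite{BSW} (their Theorems 3.2, 6.1 and 6.8), so there is no internal argument to compare against, and your sketch reconstructs essentially the strategy of that cited source — Morita equivalence via a sum of primitive idempotents of $\mathbb{C}G$, Koszulity of the smash product, and self-duality-up-to-twist of the induced bimodule resolution, with the twist generated at the top exterior power. Two cautions. First, your middle step, ``invoke the twisted analogue of Theorem \ref{BSWK}'', is not something you prove; it is legitimate rather than circular only because \cite{BSW} actually establishes Theorem 6.8 in the twisted generality (the untwisted statement quoted in this paper is the special case), so your proposal should be read as a reduction to that result, not as a self-contained argument. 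Second, since $\mathbb{C}[W]=\mathrm{Sym}(W^*)$, the equivariant Koszul resolution has terms $\mathbb{C}[W]\otimes \Lambda^i W^* \otimes \mathbb{C}[W]$, not $\Lambda^i W$ as you wrote, so the twist naturally emerges as tensoring by $\Lambda^n W^*=\det W^*=(\det W)^{-1}$; reconciling this with the statement's $(-)\otimes_{\mathbb{C}}\det W$ depends on whether one dualises the resolution on the left or the right and on the identification $\Hom(V_i\otimes W,V_j)\cong\Hom(V_i,W^*\otimes V_j)$ used to label the arrows of the McKay quiver. That is exactly the bookkeeping you flag as the main obstacle, and it is the one place where a convention slip would silently replace the twist by its inverse — harmless for $G\le\SL(W)$, but visible (and used, e.g.\ in the Appendix's distance arguments) for general $G\le\GL(W)$.
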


Working with homogeneous superpotentials, $\Phi_n=\sum c_p p $, we have $c_p=0$ for any $p$ that is not a closed path. This is no longer the case for twisted homogeneous superpotentials, here we find $c_p=0$ unless $h(p)=\sigma(t(p))$. Since the twist for $\mathbb{C}[W] \rtimes G$ is given by tensoring by $\det W$, $c_p$ is non zero only for paths from $W_i$ to $\det W \otimes_{\mathbb{C}} W_i$, where the $W_i$ are the irreducible representations corresponding to vertices in the McKay quiver.

There are two different cases of finite subgroups of $\GL_n(\mathbb{C})$ we consider, those that contain pseudo-reflections, and those that do not. Those that do not are known as \emph{small} subgroups.

As a particular case we will consider $\GL_2(\mathbb{C})$, where differentiation is by paths of length $0$, and so our relations are given by the superpotential, and any relations are a sum of paths with tail $W_i$ and head $\det W \otimes_{\mathbb{C}} W_i $.

\begin{glcase}\label{glcase}
Let $G$ be a small finite subgroup of $\GL_2(\mathbb{C})$, which is not contained in $\SL_2(\mathbb{C})$. Then $\mathbb{C}[\mathbb{C}^2] \rtimes G$ has no nontrivial (relative to $\mathbb{C}G$) PBW deformations.
\end{glcase}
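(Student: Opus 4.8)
The plan is to reduce the statement to a representation‑theoretic vanishing and then exploit that $G$ is small. First I would invoke Theorem \ref{Gen} to replace $\mathbb{C}[\mathbb{C}^2]\rtimes G$ by the Morita equivalent twisted superpotential algebra $A=\mathcal{D}(\Phi_2,0)$, whose twist is $\sigma=(-)\otimes_{\mathbb{C}}\det W$; since $k=0$ the relations are the components $r_i=e_{\sigma(i)}\Phi_2 e_i$ of $\Phi_2$, running from each vertex $i$ to $\sigma(i)$. A PBW deformation is then recorded by $S^e$‑module maps $\theta_1\colon R\to V$ and $\theta_0\colon R\to S$. The key simplification is that, because $n=2$, the space $(V\otimes_S R)\cap(R\otimes_S V)=\mathcal{W}_{N+1}=\mathcal{W}_{3}$ of Example \ref{examplePBW} is zero (one cannot differentiate the degree‑two $\Phi_2$ by a path of length three). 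Hence $\psi(\theta_0)$ and $\psi(\theta_1)$ vanish on their common domain, so conditions PBW2), PBW3) and PBW4) of Theorem \ref{BT1} are vacuous while PBW1) holds automatically. Thus a PBW deformation is exactly a choice of $(\theta_1,\theta_0)$, nontrivial precisely when $(\theta_1,\theta_0)\neq 0$. Matching heads and tails, $\theta_0(r_i)\neq0$ forces $\sigma(i)=i$ and $\theta_1(r_i)\neq0$ forces an arrow $W_i\to\sigma(W_i)$, so the theorem reduces to showing a small $G\not\le\SL_2(\mathbb{C})$ admits neither a $\sigma$‑fixed vertex nor such an arrow.

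Through the Morita equivalence it is cleanest to compute the spaces of $\theta_0,\theta_1$ as $G$‑equivariant maps out of the quadratic relations $\Lambda^2 V^*$. The degree‑one piece is
\[
\Hom_G\!\big(\Lambda^2 V^*,\,V^*\otimes\mathbb{C}G\big)\cong(\det W\otimes V^*\otimes\mathbb{C}G)^G\cong(V\otimes\mathbb{C}G)^G\cong\bigoplus_{[g]}V^{C_G(g)},
\]
using the dimension‑two identity $V\cong\det W\otimes V^*$ and decomposing $\mathbb{C}G$ under conjugation over conjugacy classes. Here I would use the reformulation that $G$ is small if and only if no $g\neq e$ has eigenvalue $1$, i.e. $G$ acts freely on $V\setminus\{0\}$. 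If $V^{C_G(g)}\neq0$ then $g$, lying in $C_G(g)$, fixes a nonzero vector; for $g\neq e$ this makes $g$ a pseudo‑reflection, while for $g=e$ the space $V^G$ is nonzero and any element with $\det\neq1$ (one exists as $G\not\le\SL_2$) then has eigenvalues $1$ and $\det\neq1$, again a pseudo‑reflection. Both contradict smallness, so the degree‑one space vanishes and $\theta_1=0$.

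It remains to kill the degree‑zero piece $\Hom_G(\Lambda^2 V^*,\mathbb{C}G)\cong(\det W\otimes\mathbb{C}G)^G\cong\bigoplus_{[g]}(\det W)^{C_G(g)}$, whose dimension equals $\#\{[g]:C_G(g)\le\SL_2(\mathbb{C})\}=\sum_{[g]}\det(g)=\#\{\text{irreducibles }U:\det W\otimes U\cong U\}$, the number of $\sigma$‑fixed vertices. When $V$ is reducible, Maschke's theorem makes $G$ abelian and diagonal, $\det W=\chi_1\chi_2$ is nontrivial, and $\sum_{g}\chi_1\chi_2(g)=0$ immediately. When $V$ is irreducible, Schur gives $Z(G)\subseteq\{\pm I\}$ and $G/Z(G)$ is one of the polyhedral groups $D_k,A_4,S_4,A_5$; the case $A_5$ cannot occur, as it has no nontrivial linear character to carry $\det W$ and so forces $G\le\SL_2$.

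The hard part is the remaining irreducible case: a small $G\not\le\SL_2$ can have no $\sigma$‑fixed irreducible. Writing $H=G\cap\SL_2(\mathbb{C})$, a $\sigma$‑fixed $U$ is exactly one induced from an irreducible of $H$ whose $G/H$‑conjugation orbit is free, so I must rule out such free orbits. Every explicit attempt to manufacture one produces an element of determinant $\neq1$ whose trace equals $1+\det$, i.e. a pseudo‑reflection, contradicting freeness; the obstacle is to turn this experimental phenomenon into a uniform statement. I expect to do this either by a direct construction — conjugating a moved $H$‑class by the non‑$\SL_2$ coset representative and reading off an eigenvalue $1$ — or, absent a clean uniform argument, by appealing to the classification of fixed‑point‑free finite subgroups of $\GL_2(\mathbb{C})$, which cuts the irreducible non‑$\SL_2$ possibilities down to a short explicit list on which $\sum_{[g]}\det(g)=0$ can be checked directly. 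Once both pieces vanish we have $\theta_0=\theta_1=0$, so the only PBW deformation relative to $\mathbb{C}G$ is the trivial one.
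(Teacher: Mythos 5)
Your reduction is sound and coincides with the paper's: after the Morita equivalence of Theorem \ref{Gen}, the question is whether $\Hom_{S^e}(R,V)$ and $\Hom_{S^e}(R,S)$ vanish, equivalently whether the McKay quiver has an arrow $i \to \sigma(i)$ or a $\sigma$-fixed vertex, where $\sigma = (-)\otimes_{\mathbb{C}}\det W$; your observation that $(V\otimes_S R)\cap(R\otimes_S V)=0$ here, so that any $(\theta_1,\theta_0)$ is automatically of PBW type, agrees with the paper's Example \ref{gl1}. Your identifications of the two spaces with $\bigoplus_{[g]}V^{C_G(g)}$ and $\bigoplus_{[g]}(\det W)^{C_G(g)}$ are correct, and your vanishing argument for the first (smallness means $G$ acts freely on $V\setminus\{0\}$; if $V^G\neq 0$, any element of non-unit determinant is a pseudo-reflection) is a genuinely classification-free proof of the ``no arrow $i\to\sigma(i)$'' half, which the paper only obtains case by case. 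However, the other half --- that no irreducible $U$ satisfies $\det W\otimes U\cong U$, i.e.\ $\theta_0=0$ --- is exactly where your proposal stops being a proof. For irreducible $V$ you concede that you have only experimental evidence and would need either a uniform construction you could not find, or an appeal to a classification that you do not carry out. That is the hard core of the theorem: the paper's entire appendix (Lemma \ref{Class}, via the Auslander--Reiten classification of McKay quivers of small subgroups, treating the cyclic, $Z_nD$, $\bar{H}$ and $\bar{K}$ families separately) exists precisely to establish this point, and the paper explicitly remarks that it knows no proof avoiding the classification. So the proposal has a genuine gap at the decisive step.

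Moreover, the structural analysis you offer before conceding is partly wrong, so it cannot be the skeleton of a completion. Schur's lemma gives $Z(G)\subseteq\mathbb{C}^{*}I$, not $Z(G)\subseteq\{\pm I\}$: a small subgroup may contain scalar matrices of any order, since $\varepsilon I$ with $\varepsilon\neq 1$ has no eigenvalue $1$ and hence is not a pseudo-reflection. Consequently your exclusion of the $A_5$ case fails: $G=Z_3D$ with $D$ the binary icosahedral group is small, irreducible, not contained in $\SL_2(\mathbb{C})$, and has $G/Z(G)\cong A_5$. The underlying error is that $\det W$ is a linear character of $G$, not of $G/Z(G)$, so the absence of nontrivial linear characters of $A_5$ is irrelevant. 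On such examples the desired vanishing $\sum_{[g]}\det(g)=0$ does hold, but for reasons your argument does not supply; to finish along your lines you would have to actually run through the classification of small subgroups --- which is exactly what the paper's appendix does.
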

\begin{proof}

The algebra $\mathbb{C}[\mathbb{C}^2] \rtimes G$ can be written as $\frac{T_{\mathbb{C}G} (\mathbb{C}^{2*} \otimes_{\mathbb{C}} \mathbb{C}G)}{<[x,y] \otimes 1>}$ and is Morita equivalent to a path algebra with relations $\mathbb{C}Q/R=\frac{T_S(V)}{R}=\mathcal{D}(\Phi_2,0)$ for some twisted homogeneous superpotential $\Phi_2$, where we use notation as in as in Section \ref{quivers}. In particular the Morita equivalence switches $\mathbb{C}G$ with $S$, and respects the gradings and Koszul resolutions.  Hence considering PBW deformations as in Section \ref{PBW deformations} we see that under the Morita equivalence any PBW deformation of $\mathbb{C}[\mathbb{C}^2] \rtimes G$ would give a PBW deformation of $\mathbb{C}Q/R$, noting that in one case considering PBW deformations relative to $\mathbb{C}G$, and in the other relative to $S$.

Hence it is enough to show that the Morita equivalent twisted superpotential algebra $A:=\mathcal{D}(\Phi_2,0)$ has no nontrivial PBW deformations. 

There can only possibly exist PBW deformations if there exists some non zero $\theta_0,\theta_1$ as in Section \ref{PBW deformations}. But $\theta_1 \in \Hom_{S^e}(R,V)$ and $\theta_0 \in \Hom_{S^e}(R,S)$, so if both these sets are $\{0\}$ there are no nontrivial PBW deformations.

Define the \emph{distance} between two vertices in the quiver to be the minimal length of a path from one to the other. It is shown in the Appendix, Lemma \ref{Class}, that the tail and head of any relation are vertices which are distance greater than one apart. Hence, as $S^e$ module maps preserve heads and tails, the sets $\Hom_{S^e}(R,V)$ and $\Hom_{S^e}(R,S)$ are both $\{0\}$ and there are no nontrivial PBW deformations.
\end{proof}

We look at examples of a small and non small subgroup, using the calculations from \cite{BSW}. We let $\varepsilon_m$ denote a primitive $m^{th}$ root of unity.

\begin{gl2} \label{gl2}
We first consider a small subgroup $\mathbb{D}_{5,2}$ with representation as
\begin{equation*}
\left\langle \left( \begin{array}{c c} \varepsilon_4 & 0 \\ 0 & \varepsilon_4^{-1} \end{array} \right) , \left( \begin{array}{c c} 0 &\varepsilon_4 \\ \varepsilon_4 & 0 \end{array} \right), \left( \begin{array}{c c} 0 &\varepsilon_6 \\ \varepsilon_6 & 0 \end{array} \right) \right \rangle
\end{equation*}

This is given as an example in \cite[Example 5.4]{BSW}.

It has McKay quiver

\begin{tikzpicture} [bend angle=0, looseness=1]
\node (A1) at (0,-4)  {$\bullet$};
\node (A2) at (0,-2)  {$\bullet$};
\node (A3) at (0, 0){$\bullet$};
\node (A4) at (0,2) {$\bullet$};

\node (B1) at (2,-1) {$\bullet$};

\node (C1) at (4,-4)  {$\bullet$};
\node (C2) at (4,-2)  {$\bullet$};
\node (C3) at (4, 0){$\bullet$};
\node (C4) at (4,2) {$\bullet$};

\node (D1) at (6,-1) {$\bullet$};

\node (E1) at (8,-4)  {$\bullet$};
\node (E2) at (8,-2)  {$\bullet$};
\node (E3) at (8, 0){$\bullet$};
\node (E4) at (8,2) {$\bullet$};

\node (F1) at (10,-1) {$\bullet$};

\node (G1) at (12,-4)  {$\bullet$};
\node (G2) at (12,-2)  {$\bullet$};
\node (G3) at (12, 0){$\bullet$};
\node (G4) at (12,2) {$\bullet$};

\draw [->,bend left] (A1) to node[gap] {$a_1$}   (B1);
\draw [->,bend left] (A2) to node[gap] {$a_2$} (B1);
\draw [->,bend left] (A3) to node[gap]  {$a_3$} (B1);
\draw [->,bend left] (A4) to node[gap] {$a_4$} (B1);

\draw [->,bend left] (B1) to node[gap]  {$b_1$} (C1);
\draw [->,bend left] (B1) to node[gap] {$b_2$} (C2);
\draw [->,bend left] (B1) to node[gap]  {$b_3$} (C3);
\draw [->,bend left] (B1) to node[gap] {$b_4$} (C4);

\draw [->,bend left] (C1) to node[gap]  {$c_1$} (D1);
\draw [->,bend left] (C2) to node[gap] {$c_2$} (D1);
\draw [->,bend left] (C3) to node[gap]  {$c_3$} (D1);
\draw [->,bend left] (C4) to node[gap] {$c_4$} (D1);

\draw [->,bend left] (D1) to node[gap]  {$d_1$} (E1);
\draw [->,bend left] (D1) to node[gap] {$d_2$} (E2);
\draw [->,bend left] (D1) to node[gap]  {$d_3$} (E3);
\draw [->,bend left] (D1) to node[gap] {$d_4$} (E4);

\draw [->,bend left] (E1) to node[gap]  {$e_1$} (F1);
\draw [->,bend left] (E2) to node[gap] {$e_2$} (F1);
\draw [->,bend left] (E3) to node[gap]  {$e_3$} (F1);
\draw [->,bend left] (E4) to node[gap] {$e_4$} (F1);

\draw [->,bend left] (F1) to node[gap]  {$f_1$} (G1);
\draw [->,bend left] (F1) to node[gap] {$f_2$} (G2);
\draw [->,bend left] (F1) to node[gap]  {$f_3$} (G3);
\draw [->,bend left] (F1) to node[gap] {$f_4$} (G4);

\draw [->, dashed, bend left] (C1) to node {} (E1);
\draw [->, dashed, bend left] (C2) to node {} (E2);
\draw [->, dashed, bend left] (C3) to node {} (E3);
\draw [->, dashed, bend left] (C4) to node {} (E4);

\draw [->, dashed, bend left] (D1) to node {} (F1);

\end{tikzpicture} \\
where the first column of vertices equals the final column of vertices. Tensoring by the determinant representation maps from a vertex to the next vertex in line to the right, as indicated by the dashed arrows, wrapping around from the right side of the diagram to the left.

The relations are 
\begin{equation*}
b_ia_i=0, \quad d_ic_i=0, \quad  f_ie_i=0 \quad \text{for $i=1,2,3,4$}
\end{equation*}
and 
\begin{equation*}
\sum_{j=1}^4 c_j b_j =0, \quad \sum_{j=1}^4 e_j d_j=0, \quad \sum_{j=1}^4 a_j f_j =0 
\end{equation*}

Consider the head and tail of any element of $R$. These vertices are related by the twist, and we see the shortest path between the tail and head is always length 2. Then $\Hom_{S^e}(R, V)$ and $\Hom_{S^e}(R,S)$ are both $\{0\}$, hence there can be no nontrivial PBW deformations.
\end{gl2}

\begin{gl1}\label{gl1}
We now consider $D_8$ as the non-small subgroup with representation
\begin{equation*}
\left\langle g=\left( \begin{array}{c c} \varepsilon_4 & 0 \\ 0 & \varepsilon_4^3 \end{array} \right), \, h=\left( \begin{array}{c c} 0 & 1 \\ 1 & 0 \end{array} \right) \right\rangle
\end{equation*}
and McKay quiver and relations, $Q$ and $R$.
\begin{center}
$
\begin{tikzpicture} [bend angle=30, looseness=1]
\node (C1) at (0,-2)  {$0$};
\node (C2) at (2,0)  {$4$};
\node (C3) at (4, -2){$1$};
\node (C4) at (0,2) {$2$};
\node (C5) at (4,2) {$3$};
\draw [->,bend left] (C1) to node[gap]  {\emph{$a$}} (C2);
\draw [->,bend left] (C2) to node[gap] {\emph{A}} (C1);
\draw [->,bend left] (C3) to node[gap]  {\emph{$d$}} (C2);
\draw [->,bend left] (C2) to node[gap] {\emph{D}} (C3);
\draw [->,bend left] (C4) to node[gap]  {\emph{$b$}} (C2);
\draw [->,bend left] (C2) to node[gap] {\emph{B}} (C4);
\draw [->,bend left] (C5) to node[gap]  {\emph{$c$}} (C2);
\draw [->,bend left] (C2) to node[gap] {\emph{C}} (C5);
\end{tikzpicture}
$
\begin{eqnarray*}
Da=0 \quad Cb=0 \\
Ad=0  \quad Bc=0 \\
aA+dD-bB-cC=0 
\end{eqnarray*}

\end{center}
Then PBW deformations of $\mathbb{C}Q/R$ are classified by $\theta_1,\theta_0:R \rightarrow A$ which are $S^e$-module maps; $\theta_1 \in \Hom_{S^e}(R,V)$ and  $\theta_0 \in \Hom_{S^e}(R,S)$.

As $S^e$-module maps preserve heads and tails we see that $\theta_1$ must be zero, and $\theta_0$ must be zero on all relations but the central one $aA+dD-bB-cC$. At this relation $\theta_0(aA+dD-bB-cC)=\lambda e_{4} $ for some $\lambda \in \mathbb{C}$. In this case $(R\otimes_S V) \cap (V \otimes_S R)=0$, and so any such $\theta_0$ gives us a PBW deformation.  Hence there is a one parameter collection of PBW deformations.
\end{gl1}

\begin{appendix}
\section{McKay quivers for finite small subgroups of $\GL_2(\mathbb{C})$}
We use the classification of McKay quivers for small finite subgroups of $\GL_2(\mathbb{C})$, \cite{AR}, to prove the following:
\begin{Class} \label{Class}
Let $G<\GL_2(\mathbb{C})$ be a small finite subgroup, given by a representation $W \cong \mathbb{C}^2$. Let $W_i$ be an irreducible representation of $G$. Then the shortest path from $W_i$ to $\det W \otimes W_i$ has length $\ge 2$.
\end{Class}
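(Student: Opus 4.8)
The plan is to reduce the length bound to two combinatorial facts about the McKay quiver $(G,W)$ and then read these off from the classification of \cite{AR}. Recall that the number of arrows $W_i \to W_j$ equals $\dim \Hom_G(W_i \otimes W, W_j)$, so that the number of paths of length $\ell$ from $W_i$ to $W_j$ equals the multiplicity of $W_j$ in $W_i \otimes W^{\otimes \ell}$; in particular a path of a given length exists precisely when the corresponding multiplicity is nonzero, with no cancellation since these counts are non-negative. Writing $\sigma = (-)\otimes_{\mathbb{C}} \det W$ for the twist, which permutes the vertices, it therefore suffices to show that $\det W \otimes W_i$ occurs neither in $W_i$ (ruling out a path of length $0$) nor in $W_i \otimes W$ (ruling out a path of length $1$).

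The length-$0$ statement is that $\sigma$ fixes no vertex. Since in the relevant situation $G \not\subset \SL(W)$, the character $\det W$ is nontrivial, so $\det W \otimes \chi \not\cong \chi$ for every one-dimensional $\chi$; in particular $\sigma$ moves the trivial vertex, and I would quote the explicit description in \cite{AR} to see that $\sigma$ is fixed-point-free on all vertices. For the length-$1$ statement I would first rewrite it as the absence of loops: an arrow $W_i \to \det W \otimes W_i$ exists iff $\det W \otimes W_i$ is a summand of $W_i \otimes W$, and tensoring by $\det W^{-1}$ together with the two-dimensional identity $W \otimes \det W^{-1} \cong W^*$ (coming from the perfect pairing $W \otimes W \to \Lambda^2 W = \det W$) turns this into the condition that $W_i$ is a summand of $W_i \otimes W^*$, i.e. a loop at $W_i$, equivalently $W \mid \operatorname{End}(W_i)$.

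Thus the heart of the lemma is that the McKay quiver of a small subgroup has no loops. Conceptually this is the statement that loops are detected by pseudo-reflections, so that their absence is precisely the smallness hypothesis; concretely I would invoke the classification of the McKay quivers of small finite subgroups of $\GL_2(\mathbb{C})$ in \cite{AR}, in which each such quiver is given explicitly (as in Example \ref{gl2}) and is visibly loop-free, with the $\det W$-twist realised as a shift carrying every vertex to combinatorial distance exactly $2$. Combining the two preceding reductions then yields distance $\ge 2$, as required.

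I expect the main obstacle to be the loop-free claim in the non-abelian cases. For abelian $G$ it follows immediately by tracking the additive weights of $W$: a single arrow shifts the weight by one of the two weights of $W$, both of which are nonzero by smallness, whereas $\det W$ shifts by their sum, so the sum can first be realised after two steps. For general small $G$ there is no such global weight function, and one must genuinely use the explicit classification of \cite{AR} — or else prove the no-loop-equals-no-pseudo-reflection principle independently — in order to exclude loops uniformly.
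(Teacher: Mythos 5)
Your two reductions are correct, and they give a genuinely different decomposition of the problem from the paper's. The length-$0$ exclusion is indeed the statement that tensoring by $\det W$ fixes no vertex, and the length-$1$ exclusion is indeed equivalent to loop-freeness: the pairing $W \otimes W \to \Lambda^2 W = \det W$ is perfect and $G$-equivariant, so $W^* \cong W \otimes (\det W)^{-1}$, whence $\Hom_G(W_i \otimes W, \det W \otimes W_i) \cong \Hom_G(W_i \otimes W^*, W_i) \cong \Hom_G(W_i, W_i \otimes W)$, which by semisimplicity vanishes exactly when there is no loop at $W_i$. The paper never mentions loops; instead it runs through the classification of \cite{AR} and observes in each case that the vertices fall into $n$ groups indexed by $\mathbb{Z}/n$ (the $Z_n$-label), that every arrow shifts this index by $+1$ while the $\det W$-twist shifts it by $+2$, and that $n>2$ whenever $G \not\subset \SL_2(\mathbb{C})$; this single grading argument excludes lengths $0$ and $1$ simultaneously (your cyclic weight argument is exactly the paper's first case). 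Note, though, that once you quote the shift-by-two description from \cite{AR}, the direct grading argument is already complete and your loop reformulation becomes an aside; and conversely the facts you need ("visibly loop-free", twist fixed-point-free) are not free. In particular, for $\bar H$ and $\bar K$ the McKay quiver is obtained from that of $Z_{2n}\times D$ or $Z_{3n}\times D$ by identifying vertices, and identifications could a priori create loops or shorten distances; checking that they respect the $\mathbb{Z}/n$-grouping is the actual work in the paper's proof, which your proposal defers.

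One conceptual claim in your proposal is false: loop-freeness is not "precisely the smallness hypothesis". Take $G=\{\pm I, \pm \mathrm{diag}(1,-1)\}\cong \mathbb{Z}/2\times\mathbb{Z}/2$. It contains pseudo-reflections, hence is not small, yet its McKay quiver has no loops: here $W=\alpha\oplus\beta$ where $\alpha,\beta$ are the two characters nontrivial on $-I$, so every arrow multiplies a character by $\alpha$ or by $\beta$ and never fixes it. Thus only the implication "small $\Rightarrow$ no loops" is true, it is not formal, and it is exactly the statement for which the classification is invoked; the hoped-for classification-free route via a no-loop-equals-no-pseudo-reflection principle is therefore not available as stated. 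This does not invalidate your proof, since you appeal to \cite{AR} anyway, but it means your argument cannot be made independent of the classification any more easily than the paper's can, and the case-by-case verification still has to be carried out to close the gap.
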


\begin{proof}

We outline this  case by case by examining the McKay quivers, showing there are no length 0 or 1 paths between a vertex in the quiver and the vertex related by tensoring by the determinant.

We list the small finite subgroups of $\GL_2(\mathbb{C})$ up to conjugacy, as in \cite[Section 2]{AR}.

Let $Z_n = \left \langle g=\left( \begin{array}{c c} \varepsilon & 0 \\ 0 & \varepsilon \end{array} \right) \right \rangle$, and $\varepsilon$ be a primitive $n^{th}$ root of unity. Any finite small subgroup of $\GL_2(\mathbb{C})$ is, up to conjugacy, one of the following:
\\

1. $Z$ a cyclic subgroup,  $Z = \left \langle g=\left( \begin{array}{c c} \varepsilon & 0 \\ 0 & \varepsilon^{q} \end{array} \right) \right \rangle \quad \text{ for  $1 \le q<n$.} $
\\

2. $Z_nD= \{ zd \, \, \,| \, \, z \in Z_n, d \in D\}$ for $D$ a finite, non cyclic, subgroup of $\SL_2(\mathbb{C})$.
\\

3. $\bar{H}$. To define $\bar{H}$ let $D < SL_2(\mathbb{C})$ be a binary dihedral group, with $A$ a cyclic subgroup of index 2, and define $H < Z_{2n} \times D$  to be
\begin{equation*}
H=\{ (z,d) \in Z \times D \, | \, d+A=z+Z_m \text{ in } Z_2=D/A=Z_{2n}/Z_{n} \}.
\end{equation*}
Then $\bar{H}$ is the image of $H$ under the map $H \rightarrow GL_2(\mathbb{C})$.
\\

4. $\bar{K}$. To define $\bar{K}$ let $D < SL_2(\mathbb{C})$ be the binary tetrahedral group, with $A$ a normal binary dihedral subgroup of index 3, let $n \ge 3$,  and define $K < Z_{3n} \times D$  to be
\begin{equation*}
K=\{ (z,d) \in Z \times D \, | \,  d+A=z+Z_n \text{ in } Z_3=D/A=Z_{3n}/Z_{n} \}.
\end{equation*}
Then $\bar{K}$ is the image of $K$ under the map $K \rightarrow GL_2(\mathbb{C})$.

We note that if $n=1,2$ then $\bar{K}$ is the binary tetrahedral group with defining representation containing pseudo reflections, so is not small.
\\

The McKay quivers for these groups are described in \cite[Proposition 7]{AR}, and we look at the determinant representation in each case and show tensoring by it relates vertices distance two apart.

We first look at cyclic subgroups. Let $Z$ be as above.  Such a representation is in $\SL_2(\mathbb{C})$ only when $q+1 =n $. We suppose $q+1 \neq n$.

Such a group has $n$ irreducible one dimensional representations, which we label $W_0, \dots W_{n-1}$, where $W_i$ is given by $g \mapsto \varepsilon^i$. The defining  representation is reducible as $\mathbb{C}^2=W_1 \oplus W_q$, and its determinant is the representation $W_{1+q}$. Hence the McKay quiver has $n$ vertices corresponding to the $W_i$ and at vertex $i$ has two arrows to vertices $i+1$ and $i+q$ modulo $n$. The relations on the McKay quiver have head and tail related by tensoring by the determinant, hence any relations with tail $W_i$ have head $W_{i+q+1}$ module $n$. We see that the two vertices $W_i$ and $W_{i+q+1}$ are distance 2 apart; they are not distance 0 as  $i \neq i+1+q$ module $n$, and they are not distance 1 as the only arrows from $i$ are to $i+1$ or $i+q$, and neither of these equals $i+1+q$ modulo $n$.

All the remaining groups are constructed by taking a subgroup of $Z_n \times D$ and then taking the image of this under the map to $\GL_2(\mathbb{C})$. If we calculate the McKay quiver for the subgroup then the image in $\GL_2(\mathbb{C})$ has McKay quiver which is a subquiver. Hence for our purposes it is enough to calculate the McKay quivers for the various subgroups of $Z_n \times D$. 

We first do this for the case $Z_nD$. We note that this is contained in $\SL_2(\mathbb{C})$ for $n=1,2$, hence we assume $n > 2$. In this case we consider the McKay quiver of  $Z_n \times D$. Let the irreducible representations of $D$ be labeled $D_0, \dots, D_{r-1}$ where $D_0$ is trivial, and $D_1$ is the given 2 dimensional representation. Let $R_i$, for $i=0, \dots n-1$, be the $n$ one dimensional irreducible representations of $Z_n$ with $R_i$ given by $g \mapsto \varepsilon^i$. Then $Z_n \times D$ has $nr$ irreducible representations given by $R_i \otimes D_j$ for $0 \le i <n$ and  $0 \le j <r$. Then we consider the McKay quiver for the defining representation $R_1 \otimes D_1$, as this corresponds to the defining representation in $\GL_2(\mathbb{C})$. In particular the McKay quiver has $n$ groups of $r$ representations labeled by representation of $Z_n$, with group $i$ corresponding to the set of representations $\{R_i \otimes D_j \, | \, j=0, \dots r-1 \}$. By definition any arrows in the quiver go from group $i$ to group $i+1$ modulo $n$. As the defining representation is $R_1 \otimes D_1$ the determinant representation is $R_2 \otimes D_0$, and the determinant maps from group $i$ to group $i+2$ modulo $n$. In particular, as $n >2$, any two vertices related by this are not distance zero or one apart.

For the cases $\bar{H}$ and $\bar{K}$ we take the McKay quiver for $Z_n \times D$, make some identifications to account for certain irreducible representations being identified for the subgroups $H,K$.

We first consider $H$. In this case we label the representations of the binary dihedral group as
\begin{center}
$
\begin{tikzpicture} [bend angle=30, looseness=1]
\node (C1) at (0,0)  {$D_0$};
\node (C2) at (0,3) {$D_0'$};
\node (C3) at (1.5,1.5)  {$D_1$};
\node (C4) at (3,1.5) { \dots};
\node (C5) at (4.5, 1.5){$D_{r-1}$};
\node (C6) at (6,0)  {$D_r$};
\node (C7) at (6,3) {$D_r'$};

\draw [->,bend left] (C1) to node  {} (C3);
\draw [->,bend left] (C3) to node {}(C1);
\draw [->,bend left] (C2) to node  {} (C3);
\draw [->,bend left] (C3) to node {} (C2);
\draw [->,bend left] (C3) to node  {} (C4);
\draw [->,bend left] (C4) to node {} (C3);
\draw [->,bend left] (C4) to node  {} (C5);
\draw [->,bend left] (C5) to node {} (C4);
\draw [->,bend left] (C5) to node  {} (C6);
\draw [->,bend left] (C5) to node {} (C7);
\draw [->,bend left] (C6) to node  {} (C5);
\draw [->,bend left] (C7) to node {} (C5);
\end{tikzpicture}
$
\end{center}
 where $D_1$ is the representation in $\SL_2(\mathbb{C})$ and $D_0$ is the trivial representation.

Now we label the representations of $Z_{2n}$ as $R_{i}$ for $i=0 \dots 2n-1$ as above, and $Z_{2n} \times D$ has $2n(r+2)$ irreducible representations given by their tensor products. The defining representation in $\GL_2(\mathbb{C})$ is given as $R_1 \otimes D_1$, and hence the determinant representation is $R_2 \otimes D_0$.  Now all representations of $Z_{2n} \times D$ are still irreducible for $H$ but some are identified, \cite[Proposition 7 (d)]{AR}. The representations which are identified are $R_i \otimes D_j$ with $R_{n+i} \otimes  D_j$ for $j=1 \dots r-1$, modulo $2n$, and $R_i \otimes D_j$ with $R_{n+i} \otimes D_j'$ modulo $2n$ for $j=0,r$. The McKay quiver for $H$ is the McKay quiver for $Z_{2n} \times D$  with these identifications made. In this case we group the irreducible representations into $n$ groups labeled by the representation of $Z_{2n}$ modulo $n$, so arrows go from group $i$ to $i+1$ and determinant from group $i$ to $i+2$ module $n$. Hence, for $n>2$, the head and tail of two vertices related by the determinant are not distance zero or one apart. When $n=1,2$ then $H=D$ and then the representation is contained in $\SL_2(\mathbb{C})$.

The final case is to consider $K < Z_{3n} \times D$. Again we label the representations of $Z_{3n}$ by $R_i$  for $i=0 \dots 3n-1$, and we label the representations of $D$ by 

\begin{center}
$
\begin{tikzpicture} [bend angle=30, looseness=1]
\node (C1) at (0,0)  {$D_0$};
\node (C2) at (1.5,0) {$D_1$};
\node (C3) at (3,0)  {$D_2$};
\node (C4) at (4.5,0) { $D_1'$};
\node (C5) at (6, 0){$D_0'$};
\node (C6) at (3,1.5)  {$D_1''$};
\node (C7) at (3,3) {$D_0''$};

\draw [->,bend left] (C1) to node  {} (C2);
\draw [->,bend left] (C2) to node {}(C1);
\draw [->,bend left] (C2) to node  {} (C3);
\draw [->,bend left] (C3) to node {} (C2);
\draw [->,bend left] (C3) to node  {} (C4);
\draw [->,bend left] (C4) to node {} (C3);
\draw [->,bend left] (C4) to node  {} (C5);
\draw [->,bend left] (C5) to node {} (C4);
\draw [->,bend left] (C3) to node  {} (C6);
\draw [->,bend left] (C6) to node {} (C3);
\draw [->,bend left] (C6) to node  {} (C7);
\draw [->,bend left] (C7) to node {} (C6);
\end{tikzpicture}
$
\end{center}
where $D_0$ is the trivial representation, and $D_1$ the defining representation.

The representation defining the group in $\GL_2(\mathbb{C})$ is $R_1 \otimes D_1$, and the determinant representation is $R_2 \otimes D_0$.

Now all the irreducible representations for $Z_{3n} \times D$ remain so for $K$, however some are identified \cite[Proposition 7 (f)]{AR}. This time triples are identified: $R_i \otimes D_j \cong R_{i+n} \otimes D_j' \cong R_{i+2n} \otimes D_j''$ modulo $3n$, for $j=0,1$ and $R_i \otimes D_2 \cong R_{i+n} \otimes D_2 \cong R_{i+2n} \otimes D_2$ modulo $3n$, as representations of $K$.

Once again we note that this splits the quiver into $n$ groups, labeled by the representation of $Z_{3n}$ module $n$, with arrows from group $i$ to $i+1$ and determinant from $i$ to $i+2$ modulo $n$. Hence, as $n>2$, the determinant maps between vertices which are not distance 0 or 1 apart. 

Hence for any small finite subgroup of $\GL_2(\mathbb{C})$ not contained in $\SL_2(\mathbb{C})$ the determinant in the McKay quiver maps between vertices which are distance greater than $1$ apart.

\end{proof}

We also note that, while we do not know a complete proof without the use of the classification, given  $W_i \neq \det W \otimes W_i$ there is an elementary character theoretic proof of Lemma \ref{Class}.

\end{appendix}
\bibliographystyle{plain}
\bibliography{cybib}

\end{document}